\newcommand{\tred}{\textcolor{black}} 
\newcommand{\tblue}{\textcolor{black}} 
\newcommand{\tgre}{\textcolor{black}} 
\numberwithin{equation}{section}
\newcommand{\hmbu}{ \hat{\bm u} }
\newcommand{\tV}{ \tilde{V} }
\newcommand{\mbr}{\bm r}
\newcommand{\mbR}{\bm R}
\newcommand{\mbw}{\bm w}
\newcommand{\mbz}{\bm z}
\newcommand{\mbph}{\bm \phi}
\newcommand{\mbx}{\bm x}
\newcommand{\mbxi}{\bm \xi}
\newcommand{\mby}{\bm y}
\newcommand{\mbu}{\bm u }
\newcommand{\mbv}{\bm v}
\newcommand{\mbg}{\bm g }
\newcommand{\mbn}{\bm n}
\newcommand{\mbF}{\bm F }
\newcommand{\mbf}{\bm f }
\newcommand{\tW}{\tilde{W} }
\newcommand{\la}{\lambda}
\newcommand{\mbphi}{\bm \phi }
\DeclarePairedDelimiter{\norm}{\lVert}{\rVert}
\DeclarePairedDelimiter{\jumpop}{\llbracket}{\rrbracket}
\DeclareSymbolFont{matha}{OML}{txmi}{m}{it}
\DeclareMathSymbol{\varv}{\mathbf}{matha}{118}
\newtheorem{prop}{Proposition}[section]
\newtheorem{Theorem}{Theorem}[section]
\newtheorem{Definition}{Definition}[section]
\newtheorem{Lemma}{Lemma}[section]
\newtheorem{Corollary}{Corollary}[section]
\newtheorem{Remark}{Remark}[section]
\newcommand\restr[2]{{
  \left.\kern-\nulldelimiterspace 
  #1 
  \vphantom{\big|} 
  \right|_{#2} 
  }}
\NewDocumentCommand{\dgal}{sO{}m}{%
  \IfBooleanTF{#1}
    {\dgalext{#3}}
    {\dgalx[#2]{#3}}%
}
\NewDocumentCommand{\dgalext}{m}{%
  \sbox0{%
    \mathsurround=0pt 
    $\left\{\vphantom{#1}\right.\kern-\nulldelimiterspace$%
  }%
  \sbox2{\{}%
  \ifdim\ht0=\ht2
    \{\kern-.625\wd2 \{#1\}\kern-.625\wd2 \}%
  \else
    \left\{\kern-.7\wd0\left\{#1\right\}\kern-.7\wd0\right\}%
  \fi
}
\NewDocumentCommand{\dgalx}{om}{%
  \sbox0{\mathsurround=0pt$#1\{$}%
  \sbox2{\{}%
  \ifdim\ht0=\ht2
    \{\kern-.625\wd2 \{#2\}\kern-.625\wd2 \}%
  \else
    \mathopen{#1\{\kern-.7\wd0 #1\{}
    #2
    \mathclose{#1\}\kern-.7\wd0 #1\}}
  \fi
}
\newcommand{\R}{{\mathbb{R}}}
\title{\bf Approximations of energy minimization in cell-induced phase transitions of fibrous biomaterials:
$\Gamma$-convergence analysis
}
\author{G.  Grekas\thanks{Aerospace Engineering and Mechanics, University of Minnesota, Minneapolis, USA.}
\and K. Koumatos\thanks{Department of Mathematics, University of Sussex.}
\and C. Makridakis\footnotemark[4] \footnotemark[2] \thanks{Department of Mathematics and Applied Mathematics, University of Crete.} 
\and P.\ Rosakis\footnotemark[3] \thanks{Institute of Applied \& Computational Mathematics, Foundation for Research \& Technology-Hellas}. 
}
\date{}
\begin{document}
\maketitle


\begin{abstract}

We consider a model of energy minimization arising in the study of  the mechanical behavior caused by cell contraction within a fibrous \tgre{biological medium}.  
The macroscopic model is based on the theory of \tblue{non rank-one convex} nonlinear elasticity for phase transitions. We study  appropriate numerical approximations based on the discontinuous Galerkin treatment of higher gradients and  used  \tgre{succesfully  in numerical simulations of  experiments}.  We show that the discrete minimizers  converge in the limit to  minimizers of the continuous problem. 
This is achieved by employing the theory of $\Gamma$-convergence  of the approximate energy  functionals to
 the continuous model when the discretization parameter tends to zero. 
The analysis is involved  due to the structure of numerical approximations which are defined in spaces with lower regularity than the space where  the minimizers of the continuous variational problem are sought. This  fact  
leads to the development of a new approach to $\Gamma$-convergence, appropriate for discontinuous finite element discretizations, which can be applied to quite general energy minimization problems. 
Furthermore, the adoption of exponential terms penalising the interpenetration of matter requires  a new framework based on Orlicz spaces for discontinuous Galerkin methods which is developed in this paper as well.
\end{abstract}

\section{Introduction}
Increasingly sophisticated mathematical techniques are needed in 
order to describe biological phenomena. \tblue{Biological cells severely deform the fibrous collagen extracellular material (ECM) around them in remarkable ways, causing the formation
of complex microstructures of highly localized deformation \cite{harris1981fibroblast,stopak1982connective}. In \cite{Grekas_2021}, this phenomenon, known as mechanical remodelling of the ECM, is modelled at a macroscopic level, as a phase transition in a nonlinear elastic material with a multi-well energy. The nonconvexity is due to buckling instability of ECM fibers at the microsopic level. Subsequently,  computational predictions  based on the mathematical model, combined with targeted experiments, lead for the first time to 
understanding the mechanisms of the observed ECM remodelling. The latter facilitates intercellular communication through the formation of tethers, regions where a densification phase transition takes place,  joining distant contracting cells. The  associated variational problem involves a non rank-one convex strain-energy
function, regularized by a higher gradient term.}  

\tblue{Here,} our objective is to mathematically justify the above procedure, 
by showing that appropriate numerical approximations, based on the discontinuous Galerkin treatment of higher gradients, and used very successfully in   computational  experiments, indeed converge in the limit to  minimizers of the continuous problem. 
This is done by employing the theory of $\Gamma$-convergence  of the approximate energy   functionals to
 the continuous model when the discretization parameter tends to zero. 
This is a rather involved task due to the structure of numerical approximations, which are defined in spaces with lower regularity than the space where the minimizers of the continuous variational problem are sought.

Our work has a number of methodological advances which go beyond the scope of the particular application. In fact, within the field of nonlinear PDEs with possibly singular solutions, calculus of variations and energy minimization has received a lot of attention from the analysis point of view. Although quite interesting and challenging,  the numerical analysis of these problems is much less developed,   {a variety of 
approaches 
are discussed in,  e.g., 
\cite{bartels2017bilayer, 
Bartels_relaxation2004,  
Carstensen_Plec_1997, Luskin_1998, Pedregal_1996}
and their  references.} Key issues, related to the design of computational algorithms, their analysis, selection criteria among possible solutions and minimizers remain largely  unexplored. Our work  contributes to scheme design and analysis of such problems. The problem considered is a typical nonlinear energy minimization problem which admits solutions exhibiting  phase transitions. 
 We show that  a good scheme design strategy is  to regularise at the discrete level the continuum energies by higher order gradients. The regularisation at the discrete level considered is of  similar nature to the artificial diffusion in conservation laws and it appears that \tgre{it enjoys} remarkable properties in terms of computational  robustness and  analytical consistency.
  \tblue{Compared to other approaches, such as relaxation, \cite{Bartels_relaxation2004}, the regularisation by higher gradients  has certain distinct desirable characteristics. 
 Computing with the relaxed energy, assuming that we can obtain it,   smears over the multiphase mixture (microstructure) or oscillations, which in certain applications (including ours) are very physical as they are observed   in   experiments. Furthermore, when microstructures appear, quite often a mathematical  quantity of interest is an underlying parametrized Young measure. The regularisation parameter $\varepsilon$  sets an upper bound to the frequency of the oscillations and thus allows micro-phenomena to be present in a computationally accessible way;  thus such oscillations (not necessarily extremely fine) 
 may provide approximations to the underlying Young measure.} 
 The discretization of higher gradients is done using the discontinuous Galerkin framework, thus retaining the regularity of only $C^0$ conforming elements. This natural approach on the other hand poses new challenges in the analysis of schemes.  New ideas are needed in the $\Gamma$-convergence 
 analysis due to the lack of conformity at the higher-gradient spaces. The adoption of exponential terms penalising the interpenetration of matter requires  a new framework based on Orlicz spaces for discontinuous Galerkin methods, developed in this paper as well. 

\tblue{Soon after a preprint version of this work was published \cite{grekas2019approximations}, a related preprint \cite{bonito2019dg} appeared, which also treats $\Gamma$-convergence of
the discetized energy from a nonconvex mechanics problem, using totally discontinuous finite elements (these two works were developed independently of each other). Apart from this similarity, there are substantial differences in both the model and the discretization
that set these papers apart.}

\noindent
\emph{The model.\/} We consider the problem of minimizing the total potential energy
\begin{equation}
\begin{aligned}
\Psi[\mbu] = \int_{\Omega} \left[ W(\nabla \mbu(\mbx))  + \Phi(\nabla \mbu(\mbx)) +  \frac{\varepsilon^2}{2} |
\nabla \nabla \mbu(\mbx)|^2 \right]  d\mbx,
\end{aligned}
 \label{equ:total_potential}
\end{equation}
\tblue{where the displacement} $\mbu \in H^2(\Omega)^2$ \tblue{and} satisfies some appropriate boundary conditions,
$W+\Phi$ is the strain energy function, 
$\Phi$ is a function that penalizes the interpenetration
of matter and is allowed to grow faster than $W$ as the volume ratio approaches zero, and $\varepsilon >0$ is a fixed real parameter (higher gradient coefficient).
%
%
%
%
The energy involves  a non rank-one convex strain-energy
function, regularized by a higher order term.  The penalty term $\Phi $ is important since, 
although it permits the appearance of  phase transitions, 
it prevents interpenetration \tblue{of matter} from taking place. 
The strain energy function models the bulk response
of the collagen ECM, while the higher order term represents a length scale for the thickness of phase transition layers and the emerging two-phase microstructures.
Specifically, the strain energy function models the mechanical response of the 
 \tblue{ECM} which
is  a   collagen material in the form of a random network of fibers at the microscopic level.
Biological cells, such as fibroblasts, are embedded in the ECM. They are attached onto the ECM fibers through proteins known as focal adhesions.
Through these molecules, cells can detect mechanical alterations to their microenvironment and can 
deform the surrounding ECM. Cells typically deform the ECM by actively 
contracting. These tractions are observed to create distinct spatial patterns of \tblue{localized, severe} densification
in the ECM between cells, forming a tether connecting them, and around the periphery of the cell
in the form of hair-like microstructures, \cite{harris1981fibroblast,stopak1982connective,Notbohm2015, Grekas_2021}.
 \tblue{These microstructures and the associated strain oscillations are strongly reminiscent of fine phase mixtures in the theory of nonlinear elasticity for phase transitions \cite{Ball1987,ball1999compatibility}. This similarity was explained in \cite{Grekas_2021} where a model for the strain energy density was obtained via multiscale modelling from the stress-strain behavior of single fibers comprising the bulk  ECM material, as summarized below in section \ref{sec:computations}. The central physical observation related to instability and phase change \cite{lakes1993microbuckling} is that} individual collagen fibers can sustain tension, but buckle and collapse under compression. \tblue{At the microscopic level, this is accounted for as an effective softening behavior in uniaxial compression of fibers in our model. The macroscopic behavior is obtained by  averaging over a uniform distribution of fiber orientations. This results in a strain energy function that loses rank-one convexity, and is essentially equivalent to a multi-well potential corresponding to} a densification phase transition at the continuum scale.
The phases correspond to low- and high-density states and their simulation \tblue{via energy minimization} leads to a remarkable agreement with experimental observations of densification microstructures \tblue{(an example of an experiment
 is shown in Figure~\ref{fig:fn3a}; the corresponding simulation is shown in Figure~\ref{fig:fn3b}). These microstructures are composed  of tethers, or relatively straight  bands joining different cells, and hairs,  thinner multiple bands emanating radially from each cell and tapering off into the ECM. The ECM density within both tethers and hairs can be  3-5 times larger than outside them.}
 
Failure of rank-one convexity implies a loss of ellipticity of the Euler-Lagrange PDEs of the corresponding
energy functional.  For a wide class of similar problems it is known  \cite{Ball1987,ball1999compatibility} that there exist oscillatory minimizing sequences with
finer and finer microstructures involving increasing numbers of strain jumps. \tblue{Similar behavior is observed in our model; the numerical approximations---obtained by mesh refinement---of terms of increasing fineness in the minimizing sequences, involve more and thinner hairs (see \tgre{Figures~\ref{fig:h0}-\ref{fig:h0/8}}). They also bear strong similarity with experiments \cite{Grekas_2021}.} 

The higher gradient term in \eqref{equ:total_potential} regularises  the corresponding
total potential energy, keeping the aforementioned minimizing sequences from having arbitrarily fine structure \tblue{(see Figure~\ref{fig:varying_e} for numerical examples)}. 

To the best of our knowledge, the deformations observed in \cite{Grekas_2021} and in the present study 
are the first examples of minimizing sequences in a multi-well compressible isotropic material.
\\

\noindent
\emph{Approximations and results.\/} The approximation of  minimizers of $(\ref{equ:total_potential})$  is quite subtle.  A straightforward  approach would be to seek approximate minimizers in the space of 
\emph{conforming} finite elements, i.e. of discrete  function spaces which are  finite dimensional subspaces of
$H^2(\Omega)^2$.  Such spaces are based on elements which require $C^1$ regularity across element interfaces,
e.g. Argyris elements \cite{brenner2007mathematical}.
However, the conformity in regularity has a very high computational 
cost under the minimization process and in addition results  in much more  complicated  algorithms as far as the implementation is concerned. 
%
%
%
%
Our choice is to use the framework of the discontinuous Galerkin method. In effect this  weakens the regularity of the approximating spaces, and  counterbalances the resulting nonconformity, by amending appropriately the discrete energy functional. Motivated by the analysis in  \cite{makridakis2014atomistic}, we introduce an approximate energy which is compatible with  $C^0$ finite element spaces, and thus requires only $H^1$ regularity.  
Corresponding finite element methods, known as $C^0$-interior penalty methods,  have been introduced  previously for the approximation of the biharmonic equation in \cite{brenner2005c0,engel2002continuous}; 
see also  \cite {Baker_1977} for fully discontinuous finite element methods.  

Here we study the convergence of discrete absolute minimizers. 
Specifically, let  $(\mbu_h)$ be a sequence of absolute minimizers for the discretized energy functional $\Psi_h$,
 namely
\begin{align}
\Psi_h[\mbu_h] = \inf_{\mbw_h \in \mathbb{A}^q_h(\Omega)} \Psi_h[\mbw_h]. 
\label{eq:abs_minimizer1}
\end{align}
Equation (\ref{eq:abs_minimizer1}) indicates that, for a fixed $h$, 
$\mbu_h$ is an absolute minimizer of $\Psi_h$. Therefore, as $h \rightarrow 0$, it is natural to
ask whether  $\mbu_h \rightarrow \mbu$ in $H^1(\Omega)^2$, where 
$\mbu$ is an absolute minimizer of the continuous problem (\ref{equ:total_potential}).
Note that $\mbu_h \in H^1(\Omega)^2$ and $\mbu \in H^2(\Omega)^2$.
To answer this question we assume first that the penalty function $\Phi$ has polynomial 
growth. Then the convergence result is given in Theorem~\ref{Thm:min_convergence}, 
where we have employed the theory of $\Gamma-$convergence and discrete
compactness results.  The analysis is rather involved due to the lack of regularity of the approximate spaces. A   $\Gamma$-convergence  result  for  discrete surface functionals involving high gradients  using 
conforming finite element spaces can be found in \cite{bartels2017bilayer}. 
Assuming that the penalty term $\Phi$ has exponential growth, extra embedding results are 
needed to show that  $\Psi_h$ $\Gamma-$converges to $\Psi$.  For this purpose,
it is crucial to use  an adaptation of  Trudinger's embedding theorem for Orlicz 
spaces \cite{trudinger1967imbeddings}, to the  piecewise  polynomial spaces admitting discontinuities in the gradients,
Theorem~\ref{Orlicz_dg}.  The analysis in this case is carried out in Section 7. \tblue{It is to be noted that in our approach we are interested in the limit $h \to 0$ for any fixed $\varepsilon.$ The tools to address the very interesting case $\varepsilon \to 0$  for  rather general  non rank-one convex  functionals  as $W$  considered herein, are currently not available. }
In addition, we remark that the case where $\Phi(\nabla \mbu) \rightarrow \infty$ as $\det(\bm{1} + \nabla \mbu)\to 0$
is currently beyond our reach.

 The present work addresses key technical issues related to the analysis of approximations of energy minimization problems involving higher gradients. A main technical obstacle in proving $\Gamma$-convergence  is the  fact that  the approximating discrete energy functionals are defined in spaces of lower regularity compared to the limiting functional as a result of the discontinuous Galerkin formulation. Notice here that 
from a computational perspective the use of $C^0$ elements permits direct comparisons with the approximations obtained even in the limit case $\varepsilon=0.$ In our analysis we use certain recovery operators for the higher gradients, well known in the analysis of discontinuous Galerkin methods. As such,  previous results from 
\cite{buffa2009compact, di2010discrete} are useful. Notice that compared to the method in  \cite{buffa2009compact} where recovery operators were used in the definition of the discretization method as well, our method leads to the natural discontinuous Galerkin method, in the sense that the part corresponding to the higher gradients in the energy introduced herein has as first variation
 the bi-linear form used in \cite{brenner2005c0}.  As mentioned, to treat exponential penalty terms in the energy functional we need to develop  an appropriate  discontinuous Galerkin framework in Orlicz spaces (Section 7).
 To this end, results of \cite{karakashian2003posteriori}  for averaging operators  have proven useful.

This paper is organized as follows. In \cref{sec:continuousProblem} we discuss some 
properties of the continuum problem, a lower bound  is proved and the minimization problem 
is stated. 
In \cref{sec:discretization} the necessary
notation, some standard finite elements results, the discrete total potential energy $\Psi_h$ and
lifting operators used in the next sections are introduced.  Equi-coercivity, the $\liminf$ and the
$\limsup$ inequalities are proved for the  discrete energy functional in \cref{sec:G_convergence},
which imply $\Psi_h \xrightarrow{\enskip \Gamma \enskip}  \Psi$. From the 
$\Gamma$-convergence result  and a discrete compactness property we deduce
the convergence of  the discrete absolute minimizers, 
section~\ref{sec:compactness_convergence}.
In section \ref{sec:Orlicz_embedding} the same convergence result is established 
when the  penalty function has exponential growth. In this section we derive key  embeddings 
of  broken polynomial spaces into an appropriate Orlicz space.
We conclude with section~\ref{sec:computations} illustrating some computational results which demonstrate the robustness of the approximating scheme and the model 
when both the mesh discretization parameter and $\varepsilon$ vary.

\section{The Continuum Problem}
\label{sec:continuousProblem}

We assume the following bounds for the terms  in equation  (\ref{equ:total_potential}) 
\begin{equation}
\begin{aligned}
c_0 \left( |\bm 1 + \nabla \mbu|^2 -c_1 \right)  \le W(\nabla \mbu) 
\le c_2 \left( |\bm 1 + \nabla \mbu|^m + c_3 \right),
\end{aligned}
\label{W:properties}
\end{equation}
for some $m \ge 2$ and positive constants $c_0, c_1, c_2, c_3$.
 Also, we will assume that the penalty term satisfies the conditions
 \begin{subnumcases}{\Phi \ge 0, \quad \text{and} \quad \Phi(\nabla \mbu)  \le}
       &$C_0 \left( |\bm 1 + \nabla \mbu|^{2m_0} + C_1 \right)$  \label{pen:polynomial_growth}
       \\
        &$C_2e^{C_3 |\nabla \mbu|^2}$ \label{pen:exponential_growth}
\end{subnumcases}
again for some $m_0 \ge 1$ and positive constants $C_0, C_1, C_2, C_3$.   


%

 To define a minimization problem we should declare appropriate boundary conditions.
 We specify a globally injective, orientation preserving $\mbg \in  {H^3(\Omega)^2}$.    We encode boundary conditions in the following set:
\begin{equation}
\begin{aligned}
 \mathbb{A}(\Omega) = \{ \mbu\in H^2(\Omega)^2: \restr{\mbu}{\partial \Omega} = \restr{\mbg}{\partial \Omega}, \tred{  \restr{\nabla \mbu}{\partial \Omega} = \restr{\nabla \mbg}{\partial \Omega}}\}.
\end{aligned}
\label{problem_set}
\end{equation}
\tred{Notice that the boundary conditions were chosen for analytical convenience only. A variety of  other  boundary conditions can be treated with appropriate modifications in the finite element analysis, see Remark \ref{alternative_bc}. }
Now, the minimization problem can be defined as:
\begin{align}
\inf \{\Psi[\mbu]: \mbu \in \mathbb{A}(\Omega)   \}.
\label{continuous_minimuzation}
\end{align}


To ensure that the total potential energy has a minimizer $\mbu\in \mathbb{A}(\Omega)$,
one can prove that $\Psi[\cdot]$ is \textit{coercive} and \textit{lower semicontinuous}. The former 
can be derived from the properties of the strain energy function, i.e. (\ref{W:properties}),
 and the Poincar\'e inequality. One way to prove the latter
is to show convergence of the lower order term using $Vitali's$ Theorem and the convexity of 
the higher order term.  To avoid repeating similar proofs, these ideas will be used to show 
that an appropriate discretization
of the energy functional $\Gamma-$converge to the continuous total potential energy. 
Then, using a discrete compactness result we deduce that $\Psi[\cdot]$ admits a minimizer
using the \textit{Fundamental Theorem of $\Gamma-$convergence} 
\cite{braides2014global, braides2002gamma}.

%
%

\section{Discretization}
\label{sec:discretization}
\subsection{Notation}

\nomenclature{$T_h$}{The triangulation of the domain $\Omega$.}

Here we assume for simplicity that the domain $\Omega$ is polygonal and, henceforth,
$T_h$ denotes the triangulation of the domain $\Omega$ with mesh size $h$. For  $K \in T_h$, $K$ a triangle,
$h_K$ is the diameter of $K$ and the mesh size is then defined as $h := \max_{K \in T_h} h_K$.
The space of polynomials defined on $K$
 with total degree less than or equal to $q$ is denoted by $\mathbb{P}_q(K)$.
Next we will require the partitions of the domain to be \textit{shape regular} 
\cite{brenner2007mathematical}, i.e., there exists $c>0$ such that 
\begin{align}\label{shape_regular}
\rho_K \ge \frac{h_K}{c}, \quad  \text{for all } K \in T_h,
\end{align}
where $\rho_K$ is the diameter  of the largest ball inscribed in $K$.

The boundaries of the elements comprise the set of mesh edges $E_h$. 
The set $E_h$ is partitioned into $E^b_h$, the boundary, and 
$E^i_h$, the internal edges, such that $E^b_h= E_h \cap \partial \Omega$ and   
$E^i_h= E_h \setminus E^b_h$. For all $e \in E^i_h$ there exist two 
distinct elements, we denote them $K_{e^+}$ and $K_{e^-}$, such that
 $e \in \partial K_{e^+} \cap \partial K_{e^-}$.
Similarly, if  $e \in E^b_h$, there exists one element $K_e$ such that $e \in \partial K_e$.

For an edge $e \in \partial K$ 
we denote by $h_e$ its length. Assuming shape regularity 
it can be shown that there exist constants $C, c$ independent of the mesh size $h$, 
such that
\begin{align}
C h_K \le h_e \le c h_K, \quad \text{for } e \in \partial K \text{ and all } K \in T_h.
\label{equ:he_b_hK}
\end{align}

To discretize the continuous functional we need to first define our
finite element spaces. We use continuous and discontinuous families 
of Lagrange elements. Consider the space of continuous piecewise polynomial functions
 $V_h^q$, \textit{viz}.
\begin{equation}
\begin{aligned}
 V^q_h(\Omega) = \{ v \in C^0(\Omega):\restr{v}{K}  \in \mathbb{P}_q(K), 
 K \in T_h \}, \quad q \in \mathbb{N}.
 \label{def:continuous_space}
\end{aligned} 
\end{equation}
\nomenclature{$V^q_h(\Omega) $}{The space of continuous piecewise polynomials functions
of degree $\le q$, (\ref{def:continuous_space}).}
Also consider the discontinuous finite dimensional space 
\begin{align}
 \tV^k_h(\Omega) = \{ v \in L^2(\Omega): \restr{v}{K}  \in \mathbb{P}_k(K),
 K \in T_h\},  \quad k \in \mathbb{N}.
\label{def:dg_space}
\end{align}  
\nomenclature{$\tV^q_h(\Omega) $}{The space of discontinuous piecewise polynomials functions
of degree $\le q$, (\ref{def:dg_space}).}

We know that $V_h^q(\Omega) \subset H^1(\Omega)$. However, 
$V_h^q(\Omega)  \not \subset H^2(\Omega)$ and thus describing (\ref{equ:total_potential})
over $V_h^q(\Omega)$ will require the introduction
of penalty and jump terms in the discrete functional.  Notice that for $u_h \in V^q_h(\Omega)$
we have $\nabla u_h \in \tV^{q-1}_h(\Omega)$.
In the sequel, we shall use the following notation:
The \textit{trace} of 
functions in $\tV^q_h(\Omega)$ belong to the space
\begin{align}
 T(E_h) := \Pi_{e\in E_h} L^2(e),
\end{align}
where we recall that $E_h$ is the set of mesh edges.
The \textit{average} and \textit{jump} operators over
$T(E_h)$ for $\mbw \in T(E_h)^{2 \times 2 \times 2}$ and 
$\mbv \in T(E_h)^{2 \times 2}$ are defined by:
\begin{equation}
 \begin{aligned}
\dgal{ \cdot } :& T(E_h)^{2 \times 2 \times 2} \mapsto L(E_h)^{2 \times 2 \times 2} \\
  &\dgal{ \mbw } := \left\{
        \begin{array}{ll}
             \frac{1}{2} (\restr{\mbw}{K_{e^+} }  + \restr{\mbw}{K_{e^-} }),
             & \quad \text{for } e \in E_h^i \\
             \mbw, & \quad \text{for } e \in E_h^b,
        \end{array}
    \right.
 \end{aligned}
\label{average_operator}
\end{equation}
\nomenclature{$\dgal{ \cdot } $}{The average operator,  (\ref{average_operator}).}
\begin{equation}
 \begin{aligned}
 \jumpop{ \cdot }
 :& T(E_h)^{2 \times 2} \mapsto L(E_h)^{2 \times 2} \\
  &\jumpop{ \mbv }	 :=
             \restr{\mbv}{K_{e^+} }  - \restr{\mbv}{K_{e^-} },
             & \quad \text{for } e \in E_h^i \\
 \end{aligned}
\label{jump_operator}
\end{equation}
\begin{equation}
 \begin{aligned}
 \jumpop{ \cdot }
 :& T(E_h)^{2 \times 2 \times 2} \mapsto L(E_h)^{2 \times 2 \times 2} \\
  &\jumpop{ \mbv \otimes \mbn_{e}}	 := 
              \restr{\mbv}{K_{e^+} } \otimes \mbn_{e^+}  + \restr{\mbv}{K_{e^-} }\otimes \mbn_{e^-},
             & \quad \text{for } e \in E_h^i \\
 \end{aligned}
\label{tensor_jump_operator}
\end{equation}
where $K_{e^+}$, $K_{e^-}$ are the  elements that share the internal edge $e$; 
$\mbn_{e^+}, \mbn_{e^-}$ are the corresponding outward normal to the edge 
and $\mbv \otimes \mbn_{e}$ is a third order tensor with 
$\left( \mbv \otimes \mbn_{e} \right)_{ijk} = \mbv_{ij} \mbn_{e_k}$. \tred{Since the boundary terms encode implicitly the boundary conditions
we prefer to define them below, see \eqref{boundary_faces}}.
 {We employ the usual summation convention, e.g. $\mbv \cdot \mbv = v_{ij} v_{ij}$; also 
subscripts preceded by a comma indicate partial differentiation with the respect to the 
corresponding coordinate, e.g. $ f_{i,j}=\partial f_i=\partial x_j$.}

\subsection{Discretization of the Energy functional}

A direct discretization of the minimization problem (\ref{equ:total_potential}) 
would require an approximation space, a subspace of $H^2(\Omega)\times H^2(\Omega)$.
This means that, for conforming finite elements, we would require $C^1$ continuity at the 
interfaces, i.e. across element internal boundaries. It is well known that
 the construction of elements that ensure $C^1$ continuity is quite complex.
Here we adopt to our problem 
an alternative approach based on the discontinuous Galerkin formulation. Our approximations
 will be sought on $ V_h^q(\Omega)^2;$ however the energy functional 
should be modified to account for possible discontinuities of normal derivatives at the element faces. 
The appropriate modification of the energy functional proposed
 below is motivated by the analysis in  
\cite{makridakis2014atomistic}; the resulting bilinear form of the biharmonic operator
 obtained via the  first variation, will be the form of the $C^0$ discontinuous Galerkin method
  for the linear biharmonic problem,  introduced in \cite{engel2002continuous}. 


The discretized functional for  $\mbu_h \in V^q_h(\Omega)^2$, $q \geq 2$,
 has the form:
\begin{equation}
\begin{aligned}
\Psi_h[\mbu_h] &= \int_{\Omega} [W(\nabla \mbu_h) +
\Phi(\nabla \mbu_h)]  
\\ & + \varepsilon^2   \Bigg(
\frac{1}{2}\sum_{K \in T_h}  \int_{K} | \nabla \nabla \mbu_h |^2 
-\sum_{\tred{e \in E_h}} \Big[ 
\int_{e}\dgal{ \nabla \nabla \mbu_h} \cdot \jumpop{\nabla \mbu_h
\otimes \mbn_e} + \frac{\alpha}{h_e}\int_{e} |\jumpop{ \nabla 
\mbu_{h}}|^2  \Big] \Bigg) \\
&=\int_{\Omega} [W(\nabla \mbu_h) +
\Phi(\nabla \mbu_h)]  + \varepsilon^2 \Psi^{ho}_h[\mbu_h] 
\end{aligned}
\label{equ:final_potential}
\end{equation}
 where the functional
$ \Psi^{ho}_h[\mbu_h] $ contains the higher order terms.
 \tred{For the boundary faces  we use the notation:}
\begin{equation}\label{boundary_faces}
\jumpop{ \nabla 
\mbu_{h}  }:=  \nabla 
\mbu_{h}^-
 -\nabla 
I_h^q \mbg \, , \text { and } \jumpop{\nabla 
\mbu_{h}  \otimes \mbn_{e}} :=	(\nabla 
\mbu_{h}^-
 -\nabla 
I_h^q \mbg)  \otimes \mbn, \text { for } e\in E^b_h= E_h \cap \partial \Omega \, ,
\end{equation}
\tred{where $\mbg$ is given in (\ref{problem_set}),  $I^q_h$ is the standard nodal interpolation operator in \eqref{Interp_operator} and $\mbn$ the outward normal on $\partial \Omega.$ 
 {Note that the stabilization term is independent of $\mbn_e$ because $|\nabla \mbu_h \otimes \mbn_e|^2 = |\nabla \mbu_h|^2$.} 
Although the   boundary condition on $\nabla \mbg$ is encoded implicitly in the discrete functional through \eqref{boundary_faces},  the boundary condition on $\mbg$ is enforced explicitly in the discrete space:}
\begin{equation}
\begin{aligned}
 \mathbb{A}_h^q(\Omega) = \{ \mbu_h \in V^q_h(\Omega)^2: 
\restr{\mbu_h}{\partial \Omega} =\restr{\mbg_h}{\partial \Omega} \},
\end{aligned}
\label{def:Aqh}
\end{equation}
where $\mbg_h = I^q_h \mbg. $ \tred {From now on we shall use the convention that for elements of $\mathbb{A}_h^q(\Omega)$ the jumps on the boundary faces are given through
\eqref{boundary_faces}.}
So, we have to solve the corresponding discrete minimization problem
\begin{align}
\inf \{\Psi_h[\mbu_h]: \mbu_h \in \mathbb{A}^q_h(\Omega)   \}.
\end{align}

Clearly $\nabla \mbu_h \in \tV^{q-1}_h(\Omega)^{2 \times 2}$.
On the other hand, $\nabla \nabla \mbu_h$ does not exist as a function in $L^2(\Omega)$ 
 and it can be defined only in the piecewise sense 
 at the element level, i.e. $\restr{\nabla \nabla \mbu_h}{K} \in \mathbb{P}_{q-2}(K)$.
 
 \tred{
 \begin{Remark} [Alternative boundary conditions]
 \label{alternative_bc}
Modeling and simulations of experimental results do not require strict boundary conditions of the 
function $\mbu$ and its derivative.
It is observed in the experiments, that when cells contract and pull the ECM fibers, they deform inhomogeneously and change shape under inhomogeneous forces. A minimal model
for this deformation has been employed in \cite{Grekas_2021}, where the boundary of the cell is 
connected to the ECM
with linear springs.  This contributes to the continuous model energy the term
\begin{align}
\frac{k}{2} \int_{\Gamma_i} |\mbu - \mbg |^2 ds,  \text{ where } \Gamma_i \subset \partial \Omega
\text{ and } | \Gamma_i | >0, 
\label{eq:linear_springs}
\end{align} 
where $k$ is the  stiffness constant and $\Gamma_i$ are the boundaries of the cells. See \cite[5.2.2. 
Model for active particles]{Grekas_2021} for more details. 
For large enough values of $k$, the above term can model Dirichlet type boundary conditions,
in the sense that the displacement is imposed by a stiff linear spring, which is perhaps closer to the 
experimental approach than the explicit enforcement of the boundary 
conditions. Then, our analysis remains valid, where now $\mbu \in H^2(\Omega)^2$,
 $\mbu_h \in V^q_h(\Omega)^2$ and the terms involving boundary edges  
 in (\ref{equ:final_potential}) and elsewhere are excluded.
 \end{Remark}
 }




%

\section{Preliminary results}

\subsection{Preliminary results for finite element spaces}

For convenience we briefly state some preliminary results on the
finite element spaces which will be useful in the sequel.
Following partially the notation of Brenner \& Scott, 
\cite{brenner2007mathematical}, let $\hat{K} = \{(1/h_K)\mbx: 
\mbx\in K \}$ and, for $w \in \mathbb{P}_q(K)$, define the 
function $\hat{w} \in \mathbb{P}_q(\hat{K} )$ by $ \hat{w}(\hat{\mbx}) = w(h_K\hat{\mbx})$.
Then $w \in W^{p, r}(K)$ is equivalent to $\hat{w} \in W^{p, r}(\hat{K})$
and
\begin{align}
 |\hat{w}|_{W^{p,r}(\hat{K})} = h_K^{p - n/r}|w|_{W^{p,r}(K)}.
 \label{eq:scaled_K}
\end{align}

Next we state the well known trace inequality:
\begin{Lemma}
\label{Theorem:Trace_inequality}
 Assume that $\Omega$ is bounded and has a Lipschitz boundary. Let $w \in 
W^{1,p}(\Omega)$, $p \in [1, +\infty]$. Then there exists a constant 
$C$ depending 
only on $p$ and $\Omega$ such that
\begin{align}
 \norm{w}_{L^p(\partial \Omega)} \le C \norm{w}_{L^p( 
\Omega)}^{1-1/p} \norm{w}_{W^{1,p}(\Omega)}^{1/p}.
\label{eq:trace_inequality}
\end{align}
\end{Lemma}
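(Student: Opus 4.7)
The plan is to reduce to smooth functions by density and then exploit the divergence theorem with a well-chosen vector field, a trick due to Gagliardo. First I would recall that since $\Omega$ has a Lipschitz boundary, $C^\infty(\overline{\Omega})$ is dense in $W^{1,p}(\Omega)$ for $p \in [1,\infty)$, and that the trace operator extends continuously in this norm; the case $p=\infty$ is immediate. Hence it suffices to prove the inequality for $w \in C^\infty(\overline{\Omega})$ with constants independent of $w$.

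Next I would construct a vector field $\bm{\beta} \in C^1(\overline{\Omega})^n$ such that $\bm{\beta}\cdot\mbn \ge c_0 > 0$ almost everywhere on $\partial\Omega$, with $\|\bm{\beta}\|_{W^{1,\infty}(\Omega)} \le C(\Omega)$. For a Lipschitz domain this is done in the standard way: cover $\partial\Omega$ by finitely many local charts in which the boundary is a Lipschitz graph, in each such chart take the constant inward-pointing vector (say $-\mbe_n$ in the local coordinates), glue these together by a partition of unity subordinate to the cover, and extend by zero into the interior. The resulting $\bm{\beta}$ depends only on $\Omega$.

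With $\bm{\beta}$ at hand, I would apply the divergence theorem to $|w|^p \bm{\beta}$: for $w \in C^\infty(\overline{\Omega})$,
\begin{equation*}
\int_{\partial\Omega} |w|^p \, \bm{\beta}\cdot\mbn \, ds
= \int_\Omega |w|^p \, \dv \bm{\beta} \, d\mbx + p\int_\Omega |w|^{p-2} w \, \bm{\beta}\cdot \nabla w \, d\mbx.
\end{equation*}
Using $\bm{\beta}\cdot\mbn \ge c_0$ on the left and Hölder's inequality (with exponents $p$ and $p/(p-1)$) on the second integral on the right, together with $\|\bm{\beta}\|_{W^{1,\infty}} \le C$, yields
\begin{equation*}
c_0 \norm{w}_{L^p(\partial\Omega)}^p
\le C\,\norm{w}_{L^p(\Omega)}^p + C\,\norm{w}_{L^p(\Omega)}^{p-1}\norm{\nabla w}_{L^p(\Omega)}
\le C\,\norm{w}_{L^p(\Omega)}^{p-1}\norm{w}_{W^{1,p}(\Omega)},
\end{equation*}
where the last step uses $\norm{w}_{L^p(\Omega)} \le \norm{w}_{W^{1,p}(\Omega)}$ to absorb the first term. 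Taking $p$-th roots gives the claimed multiplicative estimate, and a density argument transfers it from $C^\infty(\overline{\Omega})$ to $W^{1,p}(\Omega)$.

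The only delicate step is the construction of $\bm{\beta}$ on a merely Lipschitz (not $C^1$) boundary; I would treat the possibility that $\mbn$ is only defined a.e. by noting that in each local Lipschitz chart the inward normal has a uniform positive component along the graph direction, so that the partition-of-unity construction still delivers $\bm{\beta}\cdot\mbn \ge c_0 > 0$ almost everywhere. The remaining manipulations are routine integration by parts and Hölder.
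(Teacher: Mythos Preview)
Your argument is correct and is in fact the standard Gagliardo-type proof of the multiplicative trace inequality. Note, however, that the paper does not actually supply a proof of this lemma: it is stated as a well-known result, with a remark about the constant in the unit-disk case and a reference to Brenner \& Scott. So there is no paper proof to compare against; your proposal simply fills in what the authors left as background.

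One minor remark on your write-up: for a general Lipschitz domain the transversal field $\bm{\beta}$ produced by the partition-of-unity construction is naturally Lipschitz rather than $C^1$, but that regularity is all you need for the divergence theorem on Lipschitz domains, and your final estimates go through unchanged.
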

In general one can have an estimate of the above constant, for 
instance if $\Omega$ is the unit disk in $\mathbb{R}^2$ and $p=2$ then $C \le 
8^{1/4}$, see \cite{brenner2007mathematical}. We  state the discrete trace 
inequality which is a consequence of Lemma~\ref{Theorem:Trace_inequality} and of 
(\ref{eq:scaled_K}).

\begin{Lemma}[Discrete Trace Inequality]
Let $T_h$ be a shape regular triangulation. Then, there exists a constant $c_q$ independent of 
$h$, but depending on $q$, such that
\begin{align}
 \norm{ u_h}^2_{L^2(e)} \leq \frac{c_q}{h_e} \norm{u_h}^2_K, \quad \forall u_h 
\in \mathbb{P}_q(K), \forall K 
 \in T_h .
\label{eq:trace_ineq}
 \end{align}

\end{Lemma}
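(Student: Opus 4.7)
The plan is to pass to the reference element, apply the continuous trace inequality of Lemma~\ref{Theorem:Trace_inequality} together with the finite-dimensional equivalence of norms on $\mathbb{P}_q$, and then rescale using \eqref{eq:scaled_K}. Under shape regularity \eqref{shape_regular}, the scaled elements $\hat K$ (which have diameter $1$) belong to a compact family of reference shapes, so all constants arising on the reference configuration may be chosen independently of $K$.

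First, given $u_h \in \mathbb{P}_q(K)$, define $\hat u_h \in \mathbb{P}_q(\hat K)$ as in the setup $\hat u_h(\hat{\mbx}) = u_h(h_K \hat{\mbx})$. Apply Lemma~\ref{Theorem:Trace_inequality} with $p=2$ on $\hat K$, restricted to the edge $\hat e$ corresponding to $e$, to obtain
\begin{equation*}
\norm{\hat u_h}_{L^2(\hat e)}^2 \le C \norm{\hat u_h}_{L^2(\hat K)} \norm{\hat u_h}_{W^{1,2}(\hat K)}.
\end{equation*}
Since $\mathbb{P}_q(\hat K)$ is finite dimensional and $\hat K$ ranges in a compact family of shapes (a consequence of \eqref{shape_regular}), the norm equivalence
\begin{equation*}
\norm{\hat u_h}_{W^{1,2}(\hat K)} \le \tilde c_q \norm{\hat u_h}_{L^2(\hat K)}
\end{equation*}
holds with $\tilde c_q$ depending only on $q$ (and the fixed reference geometry), yielding
\begin{equation*}
\norm{\hat u_h}_{L^2(\hat e)}^2 \le C_q \norm{\hat u_h}_{L^2(\hat K)}^2.
\end{equation*}

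Finally, I scale back via \eqref{eq:scaled_K}. With $n=2$, $r=2$, $p=0$ applied on $K$ and the analogous scaling of $1$-dimensional Lebesgue measure on $e$, one has $\norm{\hat u_h}_{L^2(\hat K)}^2 = h_K^{-2}\norm{u_h}_{L^2(K)}^2$ and $\norm{\hat u_h}_{L^2(\hat e)}^2 = h_K^{-1}\norm{u_h}_{L^2(e)}^2$. Inserting these gives $\norm{u_h}_{L^2(e)}^2 \le C_q h_K^{-1}\norm{u_h}_{L^2(K)}^2$, and an appeal to \eqref{equ:he_b_hK} replaces $h_K^{-1}$ by a multiple of $h_e^{-1}$, producing the claimed constant $c_q$.

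There is no real obstacle here: the only point that needs a little care is ensuring the constants on the reference configuration are uniform across the family $\{\hat K\}$, which is exactly what shape regularity delivers. The $q$-dependence is confined entirely to the finite-dimensional norm-equivalence step and does not interact with the mesh parameter.
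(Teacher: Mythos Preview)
Your proof is correct and follows exactly the approach the paper indicates: the lemma is stated as a consequence of Lemma~\ref{Theorem:Trace_inequality} and the scaling relation \eqref{eq:scaled_K}, and you have simply written out those details, including the finite-dimensional norm equivalence on $\mathbb{P}_q(\hat K)$ and the use of \eqref{equ:he_b_hK} to pass from $h_K$ to $h_e$.
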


The standard nodal interpolation operator 
 will be denoted by $I_h^q$, where   
\begin{align}
I_h^q :  {H^s(\Omega)^2} \rightarrow V_h^q(\Omega)^2, \quad  {s \ge 2.}
\label{Interp_operator}
\end{align}

\nomenclature{$I_h^q: $}{The nodal interpolation operator, (\ref{Interp_operator}).}
Next some well known interpolation error estimates are presented.

\begin{Lemma}
\label{Lemma:error_on_K}
Let $\mbu \in H^s(\Omega)^2$, with $s \ge 2$, $T_h$ be a 
shape-regular triangulation of the domain $\Omega$.
 If $q \ge \left \lceil{s}\right \rceil -1$, where $\left \lceil{s}\right \rceil$ is the smallest integer value greater than or equal to $s$,
 then there exists a constant $c$ depending only on the domain $\Omega$,
 a shape parameter of the triangulation and $s$ such that 
 \begin{align}
 | \mbu - I_h^{q}\mbu|_{H^m(K)} \le c h_K^{s-m} | \mbu|_{H^s(K)},
  \quad 0 \le m \le s
  \label{error_est}
 \end{align}
 where $h_K$ denotes the diameter of the element $K$.
\end{Lemma}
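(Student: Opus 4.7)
The strategy is the classical Bramble--Hilbert / scaling argument, adapted to the isotropic rescaling already set up in \eqref{eq:scaled_K}. First, fix $K\in T_h$ and pass to the rescaled element $\hat K = \{(1/h_K)\mbx : \mbx\in K\}$. By the shape--regularity hypothesis \eqref{shape_regular}, $\hat K$ has diameter one and inscribed--ball radius at least $1/c$, so $\hat K$ lies in a compact family of admissible shapes; this will be the source of uniform constants. The hypothesis $q \geq \lceil s\rceil - 1$ ensures, via Sobolev embedding on the two-dimensional $\hat K$, that $H^s(\hat K)\hookrightarrow C^0(\hat K)$, so nodal interpolation $I^q_{\hat K}$ is well defined on $H^s(\hat K)$, and by construction it commutes with the scaling: $\widehat{I^q_h \mbu} = I^q_{\hat K}\hat{\mbu}$.

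Next I would invoke the Bramble--Hilbert lemma on $\hat K$. Since $I^q_{\hat K}$ reproduces polynomials of degree at most $q$ and $\lceil s\rceil - 1 \leq q$, for every $p\in \mathbb{P}_q(\hat K)$ and $0\leq m\leq s$
\begin{equation*}
|\hat{\mbu} - I^q_{\hat K}\hat{\mbu}|_{H^m(\hat K)}
= |(\hat{\mbu}-p) - I^q_{\hat K}(\hat{\mbu}-p)|_{H^m(\hat K)}
\leq C_1\|\hat{\mbu}-p\|_{H^{\lceil s \rceil}(\hat K)},
\end{equation*}
where $C_1$ bounds both the embedding constant for $H^{\lceil s\rceil}\hookrightarrow C^0$ and the norm of $I^q_{\hat K}$ on $\hat K$. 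Choosing $p$ to be the averaged Taylor polynomial of $\hat{\mbu}$ of degree $\lceil s\rceil - 1$ and applying the Bramble--Hilbert lemma gives $\|\hat{\mbu}-p\|_{H^s(\hat K)}\leq C_2|\hat{\mbu}|_{H^s(\hat K)}$, with $C_2$ depending only on the shape of $\hat K$.

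Finally, I would undo the scaling using \eqref{eq:scaled_K} on both sides. With $n=2$,
\begin{equation*}
|\mbu - I^q_h\mbu|_{H^m(K)} = h_K^{m-1}|\hat{\mbu}-I^q_{\hat K}\hat{\mbu}|_{H^m(\hat K)},
\qquad |\hat{\mbu}|_{H^s(\hat K)} = h_K^{s-1}|\mbu|_{H^s(K)},
\end{equation*}
so combining with the Bramble--Hilbert estimate yields $|\mbu - I^q_h\mbu|_{H^m(K)}\leq c\, h_K^{s-m}|\mbu|_{H^s(K)}$, the desired bound.

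The main technical obstacle is ensuring that the constant on $\hat K$ does not depend on the particular element but only on the shape--regularity constant $c$ of the triangulation. This is handled by a standard compactness argument: the family of triangles of unit diameter with inscribed ball of radius at least $1/c$ is compact modulo rigid motions, so there is a uniform upper bound on the Bramble--Hilbert constant $C_1C_2$. Equivalently one may map $\hat K$ affinely to a single fixed reference triangle $\tilde K$; shape regularity bounds the condition number of this affine map, which only alters the constants by a factor depending on $c$. Everything else in the argument (commutation with scaling, reproduction of polynomials by $I^q_h$, Bramble--Hilbert) is entirely routine.
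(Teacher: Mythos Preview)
The paper does not actually prove this lemma: it is stated as a well--known interpolation error estimate, with an implicit reference to \cite{brenner2007mathematical}. Your sketch is precisely the standard scaling/Bramble--Hilbert argument one finds there, and the overall strategy is correct.

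Two small slips are worth fixing. First, in the intermediate bound you write $\|\hat{\mbu}-p\|_{H^{\lceil s\rceil}(\hat K)}$, but for non--integer $s$ the function $\hat{\mbu}$ need not lie in $H^{\lceil s\rceil}$; the correct norm on the right is $\|\hat{\mbu}-p\|_{H^{s}(\hat K)}$, which is controlled by the fractional (Deny--Lions) form of Bramble--Hilbert. Second, the scaling identity \eqref{eq:scaled_K} gives $|\mbu - I^q_h\mbu|_{H^m(K)} = h_K^{\,1-m}|\hat{\mbu}-I^q_{\hat K}\hat{\mbu}|_{H^m(\hat K)}$, not $h_K^{\,m-1}$; with the corrected exponent the powers combine to $h_K^{s-m}$ as claimed.
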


Using the trace inequality (\ref{eq:trace_inequality})  we obtain the error estimates for
norms defined on the mesh edges. 
\begin{Corollary}\label{est_e_K}
If the assumptions of Lemma \ref{Lemma:error_on_K} hold, then  for a face $e$ 
of an element $K$, i.e. $e \in \partial K$ we have the following error estimate of the integral 
over $e$:
 \begin{align}
 | \mbu - I_h^{q}\mbu|_{H^m(e)} \le c h_K^{s-m -1/2} | \mbu|_{H^s(K)},
\label{ineq:error_on_e}
\end{align}

%

\end{Corollary}

\subsection{Poincar\'e Inequalities for Broken Sobolev Spaces}


To bound $\mbv_h\in V^q_h(\Omega)^2$ in higher order norms
we will need  Poincar\'e inequalities for broken Sobolev spaces.
For this purpose  we define the broken Sobolev seminorm for \tred{$\mbw \in  \mathbb{A}_h^q(\Omega) $:}
\begin{equation}
\begin{aligned}
 |\mbw|_{H^2(\Omega, T_h)}^2  :=  \sum_{K \in T_h}\int_K |\nabla \nabla \mbw|^2 + \sum_{e \in E_h}
 \frac{1}{h_e}\int_e |\jumpop{\nabla \mbw}|^2.
\end{aligned}
\label{eq:seminorm}
 \end{equation}
 
Since $\Omega \subset \mathbb{R}^2$ is an open, connected  and bounded set with Lipschitz
boundary, the main result in 
\cite{lasis2003poincare}
and in \cite{brenner2003poincare} implies that
\begin{equation}
\begin{aligned}
  \norm{\nabla \mbw}_{L^r(\Omega)}^2 \le c_{\theta_{min} }\Big(  
  |\mbw|_{H^2(\Omega, T_h) } ^2+   |\widehat{\nabla \mbw}|^2
  \Big)
\end{aligned}
\label{Poincare_ineq}
\end{equation}
for all $r \in [1, +\infty)$, where  $\widehat{\nabla \mbw}$ can take any
of the following forms
\begin{equation}
\begin{aligned}
\widehat{\nabla \mbw} =  \left\{
        \begin{array}{ll}
           \frac{1}{|\Omega| } \int_\Omega \nabla \mbw \\
            \frac{1}{|\partial \Omega|}\int_{\partial \Omega} \nabla  \mbw \\
            \frac{1}{|\partial \Gamma_i|}\int_{\partial \Gamma_i} \nabla  \mbw, \quad \Gamma_i 
            \subset \partial \Omega \text{ and } |\Gamma_i|> 0.
        \end{array}
    \right.
\end{aligned}
\end{equation}
The constant $c_{\theta_{min} }$ depends on the minimum angle of the triangles,  $\theta_{min}$, 
and $r$. We have assumed that the family of partitions $\{ T_h \}$ is shape regular, consequently
$\theta_{min}$ is independent of $h$, see \cite{lasis2003poincare} for details.

\subsection{Lifting and the Discrete Gradient Operators}

We adopt in our case of the discontinuous gradient the results of  \cite{bassi1997high, brezzi2000discontinuous}
regarding Lifting and Discrete gradients.

\begin{Definition}[The vectorial piecewise gradient]
\label{def:vec_pg} 
Let $\mbw \in \tV^k_h(\Omega)^m $, $\Omega \subset \mathbb{R}^n$. The vectorial piecewise gradient 
$\nabla_h: \tV^k_h(\Omega)^m \rightarrow \tV^{k-1}_h(\Omega)^{m \times n}$
is defined componentwise as
\begin{align}
\restr{(\nabla_h w_i)}{K} :=  \nabla (\restr{w_i}{K} ), \quad  i=1, ..., 
m, \forall K \in T_h.
\label{vec_piecewise_gradient}
\end{align}

Consequently for all $ \mbu_h \in V_h^q(\Omega)^2$, $\nabla_h 
\nabla \mbu_h \in L^2(\Omega)^{2 \times 2 \times 2}$ and 
\begin{align}
\int_\Omega \nabla_h \nabla \mbu_h d\mbx= \sum_{K \in T_h}  \int_{K} | \nabla \nabla \mbu_h |^2 d\mbx .
\label{eq:piecewise_grad}
\end{align}
\end{Definition}
\nomenclature{$\nabla_h: $}{ vectorial piecewise gradient, (\ref{eq:piecewise_grad}).}

For all $e \in E_h$ we define the linear operator, known as lifting operator 
\cite{bassi1997high, brezzi2000discontinuous},
$\mbr_e:L^2(e)^{2 \times 2} \rightarrow \tV^{q-2}_h(\Omega)^{2 \times 2 
\times 2}$, for all $\mbph \in L^2(e)^{2 \times 2}$ as 
follows
\begin{align}
 \int_\Omega \mbr_e(\mbph) \cdot \mbw_h d\mbx = \int_e \dgal{ \mbw_h } \cdot 
\jumpop{ \mbph \otimes \mbn_e } ds, \quad  \forall \mbw_h \in \tV^{q-2}_h(\Omega)^{2 \times 2 
\times 2}.
\label{eq:lift_operator}
\end{align}
It can be shown that $\mbr_e(\phi)$ is non zero only on the elements that 
contain $e$ on their boundary, i.e. $supp(\mbr_e) =  \{K \in T_h : e \in 
\partial K\}$.
We also define the global lifting operator
$$ \mbR_h (\phi) = \sum_{\tred{e \in E_h}} \mbr_e (\phi).$$
With the help of this operator we can represent the second term 
 {of the functional $\Phi^{ho}_h$,
see (\ref{equ:final_potential})}, as an integral over $\Omega$; namely,
\begin{align}
\int_\Omega \mbR_h( \nabla \mbu_h ) \cdot \mbw_h d\mbx = \sum_{\tred{e \in E_h}}\int_e 
\dgal{ \mbw_h } \cdot 
    \jumpop{\nabla \mbu_h \otimes \mbn_e} ds,
\label{eq:R_applied}
\end{align} 
Since $\mbu_h \in  \mathbb{A}_h^q(\Omega)$, we can substitute $\mbw_h$ by 
$\nabla_h \nabla \mbu_h$ to obtain 
\begin{equation}
\begin{aligned}
\sum_{\tred{e \in E_h} }\int_{e} \dgal{ \nabla \nabla \mbu_h } \cdot \jumpop{\nabla \mbu_h
\otimes \mbn_e} =
 \sum_{\tred{e \in E_h} }\int_{e} \dgal{ \nabla_h \nabla \mbu_h } \cdot \jumpop{\nabla \mbu_h
\otimes \mbn_e}
= \int_\Omega \mbR_h( \nabla \mbu_h ) \cdot \nabla_h \nabla \mbu_h,
\end{aligned}
\label{eq:rel2lift}
\end{equation} 
\tred{where  recall that at the boundary faces $e\in E^b_h$ we use the convention  
$\jumpop{ \nabla 
\mbu_{h}  }=  \nabla 
\mbu_{h}^-
 -\nabla 
I_h^q \mbg \, , $ and $\dgal{ \nabla \nabla \mbu_h} = \nabla \nabla \mbu_h^-\, .$}

We next define the discrete gradient $G_h$, which is a combination of the vectorial 
piecewise gradient and the global lifting operator.
In particular, the discrete gradient of $\nabla \mbu_h$ is defined as
\begin{align}
G_h(\nabla \mbu_h) = \nabla_h \nabla \mbu_h - \mbR_h( \nabla \mbu_h ).
\label{discrete_gradient}
\end{align}
 Later we will show under which conditions $G_h(\nabla \mbu_h)  \rightharpoonup \nabla 
\nabla \mbu$ in  $L^2(\Omega)^{2 \times 2 \times 2}$, when $\mbu_h \rightarrow \mbu$ 
in $H^1(\Omega)^2$ as $h \rightarrow 0$. For this reason it will be convenient
 to write the higher order derivatives of the  discrete
total potential energy in terms of the discrete gradient and the global lifting operator.
We do this in Lemma \ref{prop:ho2disc_gradient} below which  will 
be useful in the sequel.
\begin{Lemma}
\label{prop:ho2disc_gradient}
The higher order terms of the discretized total potential energy can be written
 in terms of the discrete gradient and the global lifting operator, as follows 
\begin{equation}
\begin{aligned}
 \frac{1}{2}\sum_{K \in T_h}  \int_{K} | \nabla \nabla \mbu_h |^2  
-\sum_{\tred{e \in E_h}}\int_{e}\dgal{ \nabla \nabla \mbu_h } \cdot 
\jumpop{\nabla \mbu_h \otimes \mbn_e}
=
 \frac{1}{2} \int_{\Omega} | G_h(\nabla \mbu_h) |^2 -  |\mbR_h( \nabla \mbu_h ) |^2.
\end{aligned}
\end{equation}

\begin{proof}
The proof follows from Definition \ref{def:vec_pg}  and equation (\ref{eq:rel2lift}).
\end{proof}
\end{Lemma}
From Lemma \ref{prop:ho2disc_gradient} the functional $\Psi^{ho}_h$, 
 {displayed in (\ref{equ:final_potential})}, becomes
\begin{equation}
\begin{aligned}
\Psi^{ho}_h[\mbu_h] =&  \frac{1}{2} \int_{\Omega} | G_h(\nabla \mbu_h) |^2 - 
 |\mbR_h( \nabla \mbu_h) |^2 d\mbx 
 +  {   \sum_{\tred{e \in E_h}} \frac{\alpha}{h_e}\int_{e} |\jumpop{\nabla \mbu_h}|^2  ds. }
\end{aligned}
\label{eq:hoPsi2_discrete_gradient}
\end{equation}
Using inverse inequalities (\ref{eq:trace_ineq}), one can show,  see \cite{brezzi2000discontinuous}, that 
 there exists a positive constant $C$, independent of h, such that
 \begin{align}
  \norm{\mbr_e( \nabla \mbu_h )}_{L^2(\Omega)^{2 \times 2 \times 2}} \leq 
  C_r h_e^{-1/2} \norm{  \jumpop{\nabla \mbu_h} }_{L^2(e)^{2 \times 2} }
  \label{eq:r_e_bound}
 \end{align}
This bound finally implies the next Lemma, see \cite[{Lemma 4.34}]{di2011mathematical}
 and \cite[Lemma 7]{buffa2009compact} for details.
  \begin{Lemma}[Bound on global lifting operator] 
  \label{prop:Rh_bound}
For all \tred{$\mbu_h \in  \mathbb{A}_h^q(\Omega)$} there holds 
  \begin{align}
   \int_\Omega |\mbR_h( \nabla \mbu_h)|^2  \leq 
  C_R \sum_{\tred{e \in E_h}}h_e^{-1} \int_e |\jumpop{\nabla \mbu_h}|^2 ds,
  \label{eq:R_bound}
  \end{align}
  where the constant $C_R$ depends on the constant of (\ref{eq:r_e_bound}).
\end{Lemma}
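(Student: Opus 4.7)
The plan is to use a standard duality argument from the analysis of discontinuous Galerkin methods: test the defining identity (\ref{eq:lift_operator}) of the local liftings with $\mbR_h(\nabla \mbu_h)$ itself, and then recover the $L^2$ norm by combining a discrete trace inequality with the finite overlap of the supports $\supp(\mbr_e)$. A naive approach based on inserting the per-edge bound (\ref{eq:r_e_bound}) into a triangle inequality $\|\mbR_h(\nabla\mbu_h)\|_{L^2(\Omega)} \le \sum_e \|\mbr_e(\nabla\mbu_h)\|_{L^2(\Omega)}$ would produce a spurious factor growing with the number of internal edges; the duality trick is precisely what avoids this loss.

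Concretely, I would proceed as follows. First, since $\mbR_h(\nabla \mbu_h) \in \tV^{q-2}_h(\Omega)^{2\times 2\times 2}$, it is an admissible test function in (\ref{eq:lift_operator}), and summing over $e \in E_h^i$ yields
\begin{equation*}
\int_\Omega |\mbR_h(\nabla \mbu_h)|^2 \,d\mbx
= \sum_{e \in E_h^i} \int_\Omega \mbr_e(\nabla \mbu_h) \cdot \mbR_h(\nabla \mbu_h) \,d\mbx
= \sum_{e \in E_h^i} \int_e \dgal{\mbR_h(\nabla \mbu_h)} \cdot \jumpop{\nabla \mbu_h \otimes \mbn_e}\, ds.
\end{equation*}
Next, on each edge I apply Cauchy--Schwarz and the discrete trace inequality (\ref{eq:trace_ineq}) to the average $\dgal{\mbR_h(\nabla\mbu_h)}$, using that this average is bounded by the values of $\mbR_h(\nabla\mbu_h)$ on the (at most two) elements $\omega_e$ sharing $e$, to obtain
\begin{equation*}
\int_e \dgal{\mbR_h(\nabla \mbu_h)} \cdot \jumpop{\nabla \mbu_h \otimes \mbn_e}\, ds
\le C\, h_e^{-1/2} \|\mbR_h(\nabla \mbu_h)\|_{L^2(\omega_e)} \|\jumpop{\nabla \mbu_h}\|_{L^2(e)}.
\end{equation*}
Summing over $e$, applying Cauchy--Schwarz on this edge-sum, and then invoking the shape-regularity of $T_h$ (each element belongs to at most three such $\omega_e$) to collapse the sum of local norms back to a global one,
\begin{equation*}
\sum_{e \in E_h^i} \|\mbR_h(\nabla\mbu_h)\|_{L^2(\omega_e)}^2 \le C \|\mbR_h(\nabla\mbu_h)\|_{L^2(\Omega)}^2,
\end{equation*}
gives
\begin{equation*}
\|\mbR_h(\nabla\mbu_h)\|_{L^2(\Omega)}^2 \le C \|\mbR_h(\nabla\mbu_h)\|_{L^2(\Omega)} \Bigl(\sum_{e \in E_h^i} h_e^{-1}\|\jumpop{\nabla\mbu_h}\|_{L^2(e)}^2\Bigr)^{1/2}.
\end{equation*}
Dividing by $\|\mbR_h(\nabla\mbu_h)\|_{L^2(\Omega)}$ (the conclusion is trivial when this vanishes) and squaring produces the claimed bound with $C_R$ depending only on the shape-regularity constant and on the constants in the local trace inequality (and hence implicitly on the constant in (\ref{eq:r_e_bound})).

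The main conceptual point is the first step; once one realises that $\mbR_h(\nabla\mbu_h)$ can be fed back into its own defining identity, everything else is routine. The technical obstacles are minor and amount to bookkeeping: verifying that the test function lies in the correct space $\tV^{q-2}_h(\Omega)^{2\times 2\times 2}$, and justifying the finite-overlap step via shape regularity and (\ref{equ:he_b_hK}) to ensure the constant $C_R$ does not depend on $h$.
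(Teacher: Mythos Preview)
Your argument is correct. The paper itself gives no proof but defers to \cite[Lemma~4.34]{di2011mathematical} and \cite[Lemma~7]{buffa2009compact}, indicating only that (\ref{eq:R_bound}) follows from the local estimate (\ref{eq:r_e_bound}). The argument in those references is more direct than yours: since $\supp(\mbr_e)$ consists of at most the two elements sharing $e$, on each triangle $K$ at most three of the $\mbr_e$ are non-zero, whence $\|\mbR_h(\nabla\mbu_h)\|_{L^2(K)}^2 \le 3\sum_{e\subset\partial K}\|\mbr_e(\nabla\mbu_h)\|_{L^2(K)}^2$; summing over $K$ and inserting (\ref{eq:r_e_bound}) yields (\ref{eq:R_bound}) with $C_R = 3C_r^2$. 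Your warning about the ``naive'' triangle-inequality approach is therefore slightly overstated: it fails only if one applies the triangle inequality to the global $L^2$ norm and squares afterwards, but the finite-overlap trick --- which you yourself invoke in your penultimate display --- repairs it immediately. Your duality route, testing the defining identity with $\mbR_h(\nabla\mbu_h)$, is a perfectly good alternative and is also standard in the DG literature; it simply re-derives via the trace inequality what the direct approach extracts from (\ref{eq:r_e_bound}) in one line.
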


\begin{Corollary}[Bound on Discrete Gradient]
There exists a constant $C>0$ such that for all  \tred{$\mbu_h \in  \mathbb{A}_h^q(\Omega)$}
 it holds that
\begin{align}
\norm{G_h (\nabla \mbu_h)}_{L^2(\Omega)^{2 \times 2 \times 2}} 
\le C |\mbu_h|_{H^2(\Omega, T_h)}.
\label{eq:discrete_gradient_bound}
\end{align} 

\end{Corollary}

\subsection{Analytical preliminaries} 
In the subsequent sections we examine the convergence 
of the lower order terms  in $L^1(\Omega)$,  i.e. the terms $W(\cdot)$ and $\Phi(\cdot)$
of (\ref{equ:final_potential}), when $\mbu_h \rightarrow \mbu$ in $H^1(\Omega)$.    
For this purpose we employ Vitali's convergence theorem \cite{royden1968real}.

\begin{Theorem}[Vitali convergence theorem] 
\label{Theorem:Vitali}
Let $\Omega$ be a set of finite measure and $(f_n)$ be a sequence of functions in $L^1(\Omega)$. 
Assume $(f_n)$ is uniformly integrable over $\Omega$, i.e., for each $\epsilon > 0$, 
there exist $\delta >0$ independent of $n$ such that
$\text{if } A \subset \Omega \text{ measurable  and } |A|  < \delta,
\text{ then } \int_A f_n d\mbx < \epsilon.$
If $(f_n) \rightarrow f$ pointwise a.e. on $\Omega$, then $f \in L^1(\Omega)$ and 
\begin{align}
\lim_{n \rightarrow +\infty}\int_\Omega f_n(\mbx) d \mbx = \int_\Omega f(\mbx) d \mbx.
\end{align}
\end{Theorem}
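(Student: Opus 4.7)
The plan is to combine Egorov's theorem with the uniform integrability hypothesis and Fatou's lemma, so as to justify passing the limit inside the integral. First I would establish that $f \in L^1(\Omega)$. Since $f_n \to f$ almost everywhere, $f$ is measurable, and Fatou's lemma gives $\int_\Omega |f| \le \liminf_n \int_\Omega |f_n|$, so it suffices to bound $\int_\Omega |f_n|$ uniformly in $n$. Taking $\epsilon = 1$ in the uniform integrability condition (read, as is standard, with $|f_n|$ in place of $f_n$) yields a corresponding $\delta > 0$; since $|\Omega| < \infty$, one can partition $\Omega$ into at most $N \le \lceil |\Omega|/\delta \rceil$ measurable pieces of measure less than $\delta$, and applying the uniform integrability estimate piecewise gives $\int_\Omega |f_n| \le N$, a bound independent of $n$. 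Hence $f \in L^1(\Omega)$.

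For the convergence of the integrals, fix $\epsilon > 0$ and let $\delta = \delta(\epsilon)$ be the number from the uniform integrability hypothesis. Because $\Omega$ has finite measure and $f_n \to f$ a.e., Egorov's theorem produces a measurable set $E \subset \Omega$ with $|E| < \delta$ such that $f_n \to f$ uniformly on $\Omega \setminus E$. I would then decompose
\begin{equation*}
\left| \int_\Omega f_n \, d\mbx - \int_\Omega f \, d\mbx \right| \le \int_{\Omega \setminus E} |f_n - f| \, d\mbx + \int_E |f_n| \, d\mbx + \int_E |f| \, d\mbx .
\end{equation*}
The first term is bounded above by $|\Omega \setminus E| \, \|f_n - f\|_{L^\infty(\Omega \setminus E)}$ and tends to zero by uniform convergence on $\Omega \setminus E$ together with the finite measure of $\Omega$. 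The second term is bounded by $\epsilon$ directly from uniform integrability applied to $A = E$. For the third, applying Fatou's lemma to $|f_n| \chi_E$ yields $\int_E |f| \le \liminf_n \int_E |f_n| \le \epsilon$.

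Putting the three bounds together, $\limsup_n |\int_\Omega f_n - \int_\Omega f| \le 2\epsilon$, and since $\epsilon > 0$ was arbitrary, the claimed convergence follows. The principal subtlety, and the step I would expect to require the most care, is the compatibility between the (mildly ambiguous) statement of uniform integrability and the absolute-value version actually used above: to be safe one should split $f_n = f_n^+ - f_n^-$ and apply the hypothesis on the supports of each part, but in the intended applications of this paper, where $f_n = W(\nabla \mbu_h)$ or $f_n = \Phi(\nabla \mbu_h)$ are nonnegative, this subtlety is vacuous. Egorov's theorem is the real engine of the argument, trading a small exceptional set controlled by uniform integrability against uniform convergence on the complementary set, which is enough since $\Omega$ has finite measure.
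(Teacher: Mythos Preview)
Your argument is correct and is the standard Egorov-based proof of Vitali's convergence theorem. However, the paper does not actually prove this statement: Theorem~\ref{Theorem:Vitali} is quoted as a classical preliminary result with a reference to Royden, \emph{Real Analysis}, and no proof is given in the paper itself. So there is nothing to compare against beyond noting that your proof is a faithful rendering of the textbook argument the paper is citing. Your observation about the absolute value in the uniform integrability condition is apt; the paper's statement is indeed slightly informal on this point, but as you note the applications in the paper are to nonnegative integrands, where the issue disappears.
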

We now state a useful criterion 	of uniform integrability,  {known} as the
\textit{de la Vall\'ee Poussin} criterion, 
\cite[Theorem 4.5.9]{bogachev2007measure}.

\begin{Theorem}[de la Vall\'ee Poussin criterion]
A family $(f_n) \subset L^1(\Omega)$, is uniformly integrable if, and only if, there exists a
non-negative increasing function $G$  on $[0, +\infty)$ such that 
\begin{equation}
\lim_{t \rightarrow +\infty} \frac{G(t)}{t} = +\infty \quad \text{and} \quad \sup_n \int_\Omega G \left( |f_n(\mbx)| \right) d \mbx < +\infty. 
\end{equation}

\end{Theorem}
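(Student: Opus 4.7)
This is the classical de la Vallée Poussin criterion, so my plan is to treat the two implications separately; the sufficiency direction is a short computation, while the necessity direction requires an explicit construction of $G$.

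For the sufficiency direction, assume a $G$ with the stated properties exists and set $K:=\sup_n \int_\Omega G(|f_n(\mbx)|)\,d\mbx<+\infty$. Given $\epsilon>0$, I would first use the condition $G(t)/t\to +\infty$ to choose $M>0$ so large that $t\le \tfrac{\epsilon}{2K}\,G(t)$ whenever $t>M$. For any measurable $A\subset\Omega$ I then split
\begin{equation*}
\int_A |f_n|\,d\mbx = \int_{A\cap\{|f_n|\le M\}} |f_n|\,d\mbx + \int_{A\cap\{|f_n|>M\}} |f_n|\,d\mbx \le M\,|A| + \frac{\epsilon}{2K}\int_\Omega G(|f_n|)\,d\mbx \le M|A| + \frac{\epsilon}{2}.
\end{equation*}
Choosing $\delta:=\epsilon/(2M)$ yields $\int_A |f_n|\,d\mbx<\epsilon$ uniformly in $n$, which is exactly the uniform integrability condition of Theorem~\ref{Theorem:Vitali}.

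For the necessity direction, assume $(f_n)$ is uniformly integrable. I would first observe that uniform integrability on a finite measure set forces $L^1$-boundedness: partitioning $\Omega$ into finitely many pieces of measure less than the $\delta$ corresponding to $\epsilon=1$ yields $\sup_n\|f_n\|_{L^1(\Omega)}<+\infty$. Combined with Chebyshev's inequality, this gives $\sup_n |\{|f_n|>c\}|\to 0$ as $c\to+\infty$, and hence uniform integrability can be re-expressed as
\begin{equation*}
\lim_{c\to+\infty} \sup_n \int_{\{|f_n|>c\}} |f_n|\,d\mbx = 0.
\end{equation*}
Thus I can pick an increasing sequence $c_k\uparrow+\infty$ with $\sup_n \int_{\{|f_n|>c_k\}}|f_n|\,d\mbx < 2^{-k}$, and then set
\begin{equation*}
G(t) := \sum_{k=1}^{\infty} (t-c_k)^{+}.
\end{equation*}
This $G$ is nonnegative, nondecreasing and continuous. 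Using $(t-c_k)^+\le t\,\mathbf{1}_{\{t>c_k\}}$ integrated against $|f_n|$, term-by-term summation gives $\int_\Omega G(|f_n|)\,d\mbx\le \sum_k 2^{-k}=1$ uniformly in $n$. For the superlinearity, for any fixed $N$ and $t>2c_N$ one has $G(t)\ge \sum_{k\le N}(t-c_k)\ge N(t-c_N)\ge Nt/2$, so $G(t)/t\to+\infty$ as $t\to+\infty$.

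\textbf{Main obstacle.} The sufficiency side is essentially a one-line splitting argument; the real work lies in the necessity direction, and within it the only delicate point is the passage from the stated $\delta$-$\epsilon$ version of uniform integrability to the tail-decay statement $\sup_n\int_{\{|f_n|>c\}}|f_n|\to 0$. This is where the finiteness of $|\Omega|$ is genuinely needed (to first upgrade uniform integrability to an $L^1$-bound, then apply Chebyshev). Once this tail decay is available, the choice of the cutoffs $c_k$ and the series defining $G$ are dictated by the bookkeeping above, and all remaining verifications reduce to interchanging sum and integral via monotone convergence.
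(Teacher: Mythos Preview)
Your proof is correct and follows the standard textbook route for the de la Vall\'ee Poussin criterion. However, the paper does not actually prove this statement: it is listed among the analytical preliminaries and simply attributed to \cite[Theorem 4.5.9]{bogachev2007measure}, with no argument given. So there is nothing to compare against; you have supplied a complete proof where the paper only provides a citation.
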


\section{$\Gamma-$Convergence of the discretization}
\label{sec:G_convergence}

In this section we establish the $\Gamma$-convergence of the discretized functionals $\Psi_h$ to the continuum energy $\Psi$. 
The proof consists of three parts: we first prove equi-coercivity, a necessary property to bound  
the sequence $(\mbu_h)$ when the family of discrete energies $(\Psi_h[\mbu_h])$ is bounded.
Then, we show
the $\liminf$ inequality which provides a lower bound of the discrete energies by the continuum counterpart. 
We conclude with the $\limsup$ inequality which, as we will see in Section \ref{sec:compactness_convergence} ensures the attainment of the limit. 

\subsection{Equi-Coercivity and Convergence of the Discrete Gradient}

\begin{prop}[Equi-coercivity]
\label{equi-coercivity}
Assume that $ {\alpha} >C_R$, i.e.  {the penalty parameter 
in (\ref{equ:final_potential})} is greater than the constant of Proposition 
\ref{prop:Rh_bound}.
Let $(\mbu_h)_{h>0}$ be a sequence of displacements in \tred{$\mathbb{A}^q_h(\Omega)$}
 such that for a constant $C>0$ independent of $h$ it holds that
$$
\Psi_h[\mbu_h] \leq C.
$$  
Then the there exists a constant $C_1>0$ such that
\begin{align}
&|\mbu_h|^2_{H^2(\Omega, T_h)} \le C_1 . \label{coer:dg_seminorm}
\end{align}
 In addition, 
 \begin{align}
 \norm{\mbu_h}_{H^1(\Omega)^2} \le C_2, \label{u_H1norm}
\end{align}
for a positive constant $C_2$, where  $C_1, C_2$ are independent of $h$.
\begin{proof}
We have shown,  {see (\ref{equ:final_potential})} and (\ref{eq:hoPsi2_discrete_gradient}),  
that $\Psi^{ho}[\mbu_h]$ can be
 written in terms of the discrete gradient $G_h$ and the lifting operator $\mbR_h$.
From the assumption $a > C_R$ and the bound of the global lifting operator in (\ref{eq:R_bound}), 
we see that $\Psi^{ho}[\mbu_h]$ is nonnegative:
\begin{equation}
\begin{aligned}
\Psi^{ho}_h[\mbu_h] &= \frac{1}{2} \int_{\Omega} | G_h(\nabla \mbu_h)|^2 
 - \int_{\Omega} \mbR_h( \nabla \mbu_h ) \cdot \mbR_h( \nabla \mbu_h )
+ 
   {\sum_{e \in E_h } \frac{\alpha}{h_e}\int_{e} |\jumpop{\nabla \mbu_h}|^2  }
\\ &\ge
 \frac{1}{2} \int_{\Omega} | G_h(\nabla \mbu_h)|^2 
+ \sum_{e \in E_h } \frac{\alpha -C_R}{h_e}\int_{e} |\jumpop{\nabla \mbu_h}|^2 \ge 0.
\end{aligned}
\label{Psi_ho_ubound}
\end{equation}
In particular,
\begin{align}
\Psi^{ho}_h[\mbu_h] \ge  \frac{1}{2} \int_{\Omega} | G_h(\nabla \mbu_h)|^2
\quad \text{and} \quad 
 {\Psi^{ho}_h[\mbu_h] \ge \sum_{e \in E_h^i} \frac{\alpha -C_R}{h_e}\int_{e} |\jumpop{\nabla \mbu_h}|^2.} \label{eq:Psi_ho_b1}
\end{align}

By (\ref{W:properties}) it holds $W(\nabla \mbu_h) \ge -c$, $c$ is a positive constant,  
and $\Phi(\nabla \mbu_h)$ is a nonnegative penalty parameter,
consequently
\begin{equation}
\begin{aligned}
\Psi_h[\mbu_h] &\ge \int_{\Omega} W(\nabla \mbu_h(\mbx))
\quad \text{and} \quad c + \Psi_h[\mbu_h] \ge  \varepsilon^2 \Psi^{ho}_h[\mbu_h].  
\end{aligned}
\label{Psi_ge_toall}
\end{equation}
From the assumption that $\Psi_h[\mbu_h]$ is uniformly bounded over $h$, i.e. $\Psi_h[\mbu_h] \le C$
 for all $ h >0$, we obtain that all terms  {appearing in the right hand sides of (\ref{eq:Psi_ho_b1})  and (\ref{Psi_ge_toall})} are uniformly bounded.
 Therefore, using the  
bound of the global lifting operator, (\ref{eq:R_bound}), we conclude that 
\begin{equation}
\begin{aligned}
\int_\Omega |\nabla_h \nabla \mbu_h|^2 &\le 2\int_\Omega |\nabla_h \nabla \mbu_h -
 \mbR_h( \nabla \mbu_h )|^2 + 2\int_\Omega \mbR_h( \nabla \mbu_h )|^2 
 \\ & \le
 2\int_\Omega | G_h(\nabla \mbu_h)|^2 + 2  C_R \sum_{e \in E_h}h_e^{-1} \int_e |\jumpop{\nabla \mbu_h}|^2
 {\le C^{\prime} }
\end{aligned}
\end{equation}
and 
\begin{equation}
\begin{aligned}
|\mbu_h|^2_{H^2(\Omega, T_h)} &= \int_\Omega |\nabla_h \nabla \mbu_h|^2 + \sum_{e \in E_h}h_e^{-1} \int_e |\jumpop{\nabla \mbu_h}|^2 
\le C_1.
 \end{aligned}
 \end{equation}
It remains to show that  $\norm{\mbu_h}_{H^1(\Omega)^2}$ is uniformly bounded, 
using that  $\mbu_h \in \mathbb{A}_h^q(\Omega)$.  
By the coercivity condition on $W$ given in (\ref{W:properties}), equation (\ref{Psi_ge_toall}) gives
\begin{multline}
C \ge \Psi[\mbu_h] \ge \int_{\Omega} W(\nabla \mbu_h(\mbx)) \ge 
 c_0\int_{\Omega} \left( |\bm{1} + \nabla \mbu_h|^2 - c_1 \right) 
  \ge c_0\int_{\Omega} \left( |\bm{1}|^2  + |\nabla \mbu_h|^2 - 2|\bm{1}||\nabla \mbu_h| - c_1  \right) 
 \\
 \ge c_0\int_{\Omega} \left( |\bm{1}|^2  + |\nabla \mbu_h|^2 - \frac{|\bm{1}|^2}{\delta} - \delta|\nabla \mbu_h|^2 - c_1  \right),
\end{multline}
where we have used the Cauchy-Schwarz and Young's inequality. Choosing, for example, $\delta=1/2$, we infer that
$
C \ge \frac{c_0}{2} \int_{\Omega} \left( |\nabla \mbu_h|^2- c \right) d\mbx 
$
and the proof is concluded by Poincar\'e's inequality.
\end{proof}
\end{prop}

\subsection{The $\liminf$ inequality}

\begin{Lemma}[Convergence of the lower order terms]
\label{Lemma:lo_convergence0}
Let $\mbu_h \rightarrow \mbu$ in $H^1(\Omega)^2$, with $\mbu_h \in H^1(\Omega)^2$ 
and $\mbu \in H^2(\Omega)^2$. Suppose further that
\begin{align}
\norm{\nabla \mbu_h}_{L^r(\Omega)^{2 \times 2}} < C
 \quad \text{for all } r \in [1, +\infty) \text{ and for all } h >0, 
\label{unif_bound}
\end{align}
where $C$ is independent of $h$. Then, up to a subsequence,
\begin{align}
 \int_\Omega W(\nabla \mbu_h) \rightarrow \int_\Omega W(\nabla \mbu). \label{W:convergence0}
\end{align} 
\end{Lemma}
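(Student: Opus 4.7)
The plan is to apply Vitali's convergence theorem (Theorem~\ref{Theorem:Vitali}) to the sequence $(W(\nabla \mbu_h))$. Two ingredients are required: pointwise a.e. convergence along a subsequence and uniform integrability over $\Omega$.

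For pointwise convergence, the hypothesis $\mbu_h \to \mbu$ in $H^1(\Omega)^2$ gives $\nabla \mbu_h \to \nabla \mbu$ in $L^2(\Omega)^{2\times 2}$, and a standard extraction argument yields a (non-relabelled) subsequence for which $\nabla \mbu_h \to \nabla \mbu$ a.e.\ in $\Omega$. Continuity of $W$ (implicit in the setup, and consistent with the pointwise bounds in (\ref{W:properties})) then gives $W(\nabla \mbu_h) \to W(\nabla \mbu)$ a.e.

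For uniform integrability, I would invoke the de la Vall\'ee Poussin criterion. From the upper growth bound in (\ref{W:properties}), together with the coercivity lower bound, which implies $W(\nabla \mbu_h) \ge -c_0 c_1$, there is $\tilde c>0$ such that
\[
|W(\nabla \mbu_h)| \le \tilde c \bigl(1 + |\nabla \mbu_h|^m\bigr).
\]
Fix any $r>m$ and set $G(t) = t^{r/m}$. Then $G(t)/t \to +\infty$ as $t \to +\infty$, and
\[
\int_\Omega G\bigl(|W(\nabla \mbu_h)|\bigr)\, d\mbx \;\le\; \tilde c'\int_\Omega \bigl(1 + |\nabla \mbu_h|^r\bigr)\, d\mbx \;\le\; \tilde c'\bigl(|\Omega| + C^r\bigr),
\]
uniformly in $h$ by assumption (\ref{unif_bound}). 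The de la Vall\'ee Poussin criterion therefore establishes uniform integrability of $(W(\nabla \mbu_h))$.

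Vitali's theorem then delivers $\int_\Omega W(\nabla \mbu_h) \to \int_\Omega W(\nabla \mbu)$ along the subsequence, proving (\ref{W:convergence0}). The only mildly subtle point is the interplay between the polynomial growth exponent $m$ of $W$ and the $L^r$ hypothesis on $\nabla \mbu_h$: because the hypothesis provides uniform bounds in every $L^r$ with $r<\infty$, we have ample room to pick $r$ strictly larger than $m$ and apply de la Vall\'ee Poussin. The passage to a subsequence is needed only to promote $L^2$ convergence of gradients to a.e.\ convergence, which explains the "up to a subsequence" qualifier in the statement.
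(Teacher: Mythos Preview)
Your proof is correct and follows essentially the same route as the paper: extract a subsequence for a.e.\ convergence of gradients, use the polynomial growth bound on $W$ together with the uniform $L^r$ bounds to obtain uniform integrability, and conclude via Vitali's theorem. Your argument is in fact more explicit than the paper's, which simply asserts uniform integrability without naming the de~la~Vall\'ee~Poussin criterion; you also handle the possible negativity of $W$ more carefully than the paper does.
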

\begin{proof}
From the assumption $\mbu_h \rightarrow \mbu$ in $H^1(\Omega)^2$ there exists a
subsequence, not relabeled, such that $\nabla \mbu_h \rightarrow \nabla \mbu$ a.e.
Note that $W$, see  (\ref{W:properties}), is bounded from above. Specifically 
\begin{align}
0 \le W(\nabla \mbu)  = \hat{W}(\mbF)  \le c_1 \left( |\mbF|^m + 1 \right),
\end{align} 
for some $m \in \mathbb{N}$.
Since $(\nabla \mbu_h)$ is bounded in all $L^r$ norms, by (\ref{unif_bound}), this implies
 that $(W(\nabla u_h))$  is uniformly integrable and from
Vitali's Theorem~\ref{Theorem:Vitali} must converge to $W(\nabla \mbu)$ in $L^1(\Omega)$.
\end{proof}

\begin{Remark}[Convergence of the penalty term]
\label{Remark:penalty_convergence}
From inequality (\ref{pen:polynomial_growth}) the penalty term is bounded from above, i.e.
\[   
\Phi(\nabla \mbu) \le c_0 \left| \mbF \right|^{m} + C_1.
\] 
Then as in Lemma~\ref{Lemma:lo_convergence0} 
$\Phi(\nabla \mbu_h) \rightarrow \Phi(\nabla \mbu)$, as $h \rightarrow 0$ up to a subsequence.
\end{Remark}

  \begin{Lemma}
  \label{Lemma:Weak_D_gradient_weak_limit}
  Let $\mbu_h \rightarrow \mbu$ in 
$H^1(\Omega)^2$,  with \tred{$\mbu_h\in \mathbb{A}^q_h(\Omega)$}. If $|\mbu_h|_{H^2(\Omega, T_h)}$
  is uniformly bounded with respect to  $h$, then 
  
  \begin{equation}
\begin{aligned}
\lim_{h \rightarrow 0 }\int_\Omega G_h(\nabla \mbu_h) \cdot \mbph =
 -\int_\Omega \nabla \mbu \cdot (\nabla \cdot \mbph), \quad 
 \forall  \mbph \in  C_c^\infty(\Omega)^{2 \times 2 \times 2},  \text{ where $(\nabla \cdot \mbph)_{ij} = \mbph_{ijk,k}$}.
\end{aligned}
\label{eq:discrete_gradient_distr_convergence}
\end{equation}

\end{Lemma}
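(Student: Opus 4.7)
The plan is to write $G_h(\nabla\mbu_h) = \nabla_h \nabla\mbu_h - \mbR_h(\nabla\mbu_h)$, apply elementwise integration by parts to the first piece and the defining identity of the lifting operator to the second, and then show that the resulting skeleton remainder vanishes in the limit, thanks to the uniform bound on $|\mbu_h|_{H^2(\Omega,T_h)}$ and the smoothness of the test field.

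Fix $\mbph \in C_c^\infty(\Omega)^{2\times 2\times 2}$. Since $\mbu_h|_K$ is a polynomial, elementwise integration by parts on each $K \in T_h$, followed by summation and the definition of $\jumpop{\cdot\otimes\mbn_e}$ (boundary-edge contributions vanish as $\mbph$ has compact support in $\Omega$), gives
\[
\int_\Omega \nabla_h \nabla\mbu_h \cdot \mbph = -\int_\Omega \nabla\mbu_h\cdot(\nabla\cdot\mbph) + \sum_{e\in E_h^i}\int_e \jumpop{\nabla\mbu_h\otimes\mbn_e}\cdot \mbph.
\]
For the lifting contribution, let $\Pi_h\mbph \in \tV^{q-2}_h(\Omega)^{2\times 2 \times 2}$ denote the $L^2$-projection of $\mbph$. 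Since $\mbR_h(\nabla\mbu_h) \in \tV^{q-2}_h$, the orthogonality of $\Pi_h$ combined with (\ref{eq:lift_operator})--(\ref{eq:sum_lift_operator}) yields
\[
\int_\Omega \mbR_h(\nabla\mbu_h)\cdot\mbph = \int_\Omega \mbR_h(\nabla\mbu_h)\cdot \Pi_h\mbph = \sum_{e\in E_h^i}\int_e \dgal{\Pi_h\mbph}\cdot \jumpop{\nabla\mbu_h\otimes\mbn_e}.
\]
Subtracting these two identities,
\[
\int_\Omega G_h(\nabla\mbu_h)\cdot\mbph = -\int_\Omega \nabla\mbu_h\cdot(\nabla\cdot\mbph) + \sum_{e\in E_h^i}\int_e \jumpop{\nabla\mbu_h\otimes\mbn_e}\cdot\bigl(\mbph - \dgal{\Pi_h\mbph}\bigr).
\]

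The bulk term converges to $-\int_\Omega \nabla\mbu\cdot(\nabla\cdot\mbph)$ directly from the strong convergence $\mbu_h \to \mbu$ in $H^1(\Omega)^2$. For the skeleton remainder I apply Cauchy--Schwarz to obtain
\[
\Bigl|\sum_{e\in E_h^i}\int_e \jumpop{\nabla\mbu_h\otimes\mbn_e}\cdot(\mbph - \dgal{\Pi_h\mbph})\Bigr|^2 \le \Bigl(\sum_e h_e^{-1}\|\jumpop{\nabla\mbu_h}\|_{L^2(e)}^2\Bigr)\Bigl(\sum_e h_e \|\mbph-\dgal{\Pi_h\mbph}\|_{L^2(e)}^2\Bigr).
\]
The first factor is at most $|\mbu_h|_{H^2(\Omega,T_h)}^2$, which is uniformly bounded by hypothesis; the second factor is $O(h^2)$ by standard trace and $L^2$-projection estimates applied to the smooth field $\mbph$ (since $q\geq 2$, $\tV^{q-2}_h$ contains at least piecewise constants, and averaging alone gives $\|\mbph - \Pi_h\mbph\|_{L^\infty(K)} = O(h_K)$). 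Hence the edge remainder is $O(h)$ and vanishes as $h\to 0$.

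The main technical point is precisely this last step: one must introduce a single discrete surrogate $\Pi_h\mbph$ that simultaneously lives in the image space of $\mbR_h$, so that the lifting identity can be invoked at all, and is sharp enough on the mesh skeleton that the Cauchy--Schwarz scaling $h_e^{-1}$ against $h_e$ allows the only available jump control, namely the uniformly bounded broken $H^2$-seminorm of $\mbu_h$, to dominate the approximation error of $\mbph$.
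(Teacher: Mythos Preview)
Your proof is correct and follows essentially the same route as the paper's: elementwise integration by parts, insertion of a discrete surrogate for $\mbph$ to invoke the lifting identity, and then a Cauchy--Schwarz estimate on the skeleton remainder against the uniformly bounded broken $H^2$-seminorm. The only cosmetic difference is that the paper uses the piecewise average $\bar I^0_h\mbph$ as surrogate and therefore carries two remainder terms ($I_1$ on the skeleton and $I_2$ in the bulk), whereas your choice of the $L^2$-projection $\Pi_h\mbph$ exploits orthogonality against $\mbR_h(\nabla\mbu_h)\in\tV^{q-2}_h$ to eliminate the bulk remainder and leave a single edge term---a mild streamlining of the same argument.
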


\begin{proof}
This proof is an adaptation of the proof in \cite[Theorem 2.2]{di2010discrete}. 
Let $\mbph \in C_c^\infty(\Omega)^{2 \times 2 \times 2}$, then 
from the definition of the piecewise gradient, equation (\ref{vec_piecewise_gradient}),
  and the divergence theorem, we obtain
\begin{equation}
\begin{aligned}
 \int_\Omega &G_h(\nabla \mbu_h) \cdot \mbph  =
 \sum_{K \in T_h} \int_K\nabla \nabla \mbu_h \cdot \mbph - \int_\Omega 
\mbR_h( \nabla \mbu_h ) \cdot \mbph  
\\
=& -\int_\Omega \nabla \mbu_h \cdot (\nabla \cdot \mbph) 
+  \sum_{e \in E _h^i}\int_{e} \mbph \cdot \jumpop{\nabla \mbu_h \otimes \mbn_e }
-\int_\Omega 
\mbR_h( \nabla \mbu_h) \cdot \mbph ,
\end{aligned}
\label{eq:weak_G_conv}
\end{equation}
where we recall that  {$(\nabla\mbu_h\otimes \mbn)_{ijk} = (\mbu_h)_{i,j}\mbn_k$}.

First, we show that the last two terms convergence to $0$ as $h 
\rightarrow 0$. To this end, 
let $\bar{I}^0_h$ be the piecewise average operator onto
$\tV^0_h(\Omega)^{2 \times 2\times 2}$, i.e. 
$\restr{\bar{I}^0_h \mbph}{K}  = 1/|K| \int_K \mbphi$,
 and define $\mbph_h =\bar{I}^0_h \mbph$. 
Then from standard error estimates,  see e.g. \cite[Lemma 1.58]{di2011mathematical},
\begin{equation}
\begin{aligned}
\int_\Omega |\mbph - \mbph_h|^2 & 
\le \sum_{K \in T_h} c_K h^2_K \int_K |\nabla \mbph |^2  \le c h^2 \int_\Omega |\nabla \mbph |^2 \rightarrow 0.
\end{aligned}
\label{eq:phi_h_conv}
\end{equation}
Now, using the definition of $\mbR_h$, see (\ref{eq:R_applied}), for the last two terms of 
(\ref{eq:weak_G_conv}) we obtain
\begin{equation}
 \begin{aligned}
 &\sum_{e \in E  _h^i}\int_{e} \mbph \cdot \jumpop{\nabla \mbu_h \otimes \mbn_e }- 
\int_\Omega 
\mbR_h( \nabla \mbu_h ) \cdot \mbph
\\ 
=& \sum_{e \in E _h}\int_{e} \dgal{ \mbph -\mbph_h} \cdot \jumpop{\nabla \mbu_h \otimes 
\mbn_e}
- \int_\Omega \mbR_h( \nabla \mbu_h ) \cdot (\mbph - \mbph_h)
 {=: I_1 - I_2}
 \end{aligned}
\end{equation}

The assumption that $|\mbu_h|_{H^2(\Omega, T_h)}$ is bounded for all $h >0$, implies 
that $\sum_{e \in E_h} h^{-1}_e \norm{\jumpop{\nabla \mbu_h}}^2_{L^2(e)^{2 \times 2}}$ 
is also uniformly bounded. Also, from Proposition \ref{prop:Rh_bound} the global
 lifting operator is bounded from the jump terms, inequality (\ref{eq:R_bound}).
As a result, 
\begin{align}
\int_\Omega |\mbR_h( \nabla \mbu_h) |^2 \le 
C_R \sum_{e \in E_h}h_e^{-1} \int_e |\jumpop{\nabla \mbu_h}|^2  \le C^\prime
\label{jump_term_bound}
\end{align}
and $(\mbR_h( \nabla \mbu_h))$ is uniformly bounded in the 
$L^2(\Omega)^{2 \times 2 \times 2}$ norm.
From the last relation and the Cauchy-Schwarz inequality we bound  $I_2$:
\begin{equation}
\begin{aligned}
|I_2| =& \Bigl| \int_\Omega \mbR_h( \nabla \mbu_h) \cdot (\mbph - \mbph_h) 
\Bigr|
\leq  
 { \norm{ \mbR_h( \nabla \mbu_h) }_{L^2(\Omega)^{2 \times 2 \times 2}} \norm{\mbph - 
\mbph_h }_{L^2(\Omega)^{2 \times 2 \times 2}} }.
\end{aligned}
\label{eq:R_I2}
\end{equation}
 Hence, $I_2 \rightarrow 0$, as $h \rightarrow 0$ by the error estimate given in (\ref{eq:phi_h_conv}). 
 Similarly for the term $I_1$, using the previous uniform bound for the jump terms, 
 the error estimate  for integrals that are defined over an edge $e$, (\ref{ineq:error_on_e}),
 and letting $K_e = \{ K\in T_h : e \in \partial K\}$, we infer that
 \begin{equation}
\begin{aligned}
|I_1| 
 &\le
\sum_{e \in E_h} \norm{ \dgal{\mbph -\mbph_h} }_{L^2(e)^{2 \times 2 \times 2}} 
\norm{\jumpop{\nabla \mbu_h}}_{L^2(e)^{2 \times 2}}  
\le
 c \sum_{e \in E_h } h_e^{1/2} |\mbph|_{H^1(K_e)^{2 \times 2 \times 2}} 
\norm{\jumpop{\nabla \mbu_h}}_{L^2(e)^{2 \times 2} } 
\\
 &\le
 {c \bigg(\sum_{e \in E_h } h_e^{2} |\mbph|^2_{H^1(K_e)^{2 \times 2 \times 2}}  \bigg)^{1/2}
 \bigg( 
 \sum_{e \in E_h } h_e^{-1}\norm{\jumpop{\nabla \mbu_h}}^2_{L^2(e)^{2 \times 2}} \bigg)^{1/2}
\le
C h |\mbph|_{H^1(\Omega)^{2 \times 2 \times 2}} \rightarrow 0,}
\end{aligned}
\label{eq:R_I1}
\end{equation}
Since $\mbu_h \rightarrow \mbu$ in $H^1(\Omega)$, taking the limit as $h \rightarrow 0$, 
employing   (\ref{eq:R_I2}) and  (\ref{eq:R_I1}),
the relation  (\ref{eq:weak_G_conv}) implies
\begin{equation}
\begin{aligned}
\lim_{h \rightarrow 0 }\int_\Omega G_h(\nabla \mbu_h) \cdot \mbph =&
\lim_{h \rightarrow 0 }\Big(-\int_\Omega \nabla \mbu_h \cdot (\nabla \cdot \mbph) 
+\mbR_h( \nabla \mbu_h ) \cdot \mbph 
\\ 
 &+  \sum_{e \in E_h}\int_{e} \mbph \cdot \jumpop{\nabla \mbu_h \otimes \mbn_e }
\Big)
 =  {-\int_\Omega \nabla \mbu \cdot (\nabla \cdot \mbph).}
\end{aligned}
\end{equation}

 \end{proof}

\begin{Corollary}{(Weak Convergence of the Discrete Gradients)}. 
\label{cor:discrete_gradient_weak_conv}
Suppose the assumptions of Lemma \ref{Lemma:Weak_D_gradient_weak_limit} hold. In addition assume that 
$\mbu \in H^2(\Omega)^2$, then 
\begin{align}
G_h(\nabla \mbu_h)  \rightharpoonup \nabla \nabla \mbu \quad \text{in } L^2(\Omega)^{2 \times 2 \times 2}.
\end{align}

\begin{proof}
Lemma \ref{Lemma:Weak_D_gradient_weak_limit} ensures the limit of equation (\ref{eq:discrete_gradient_distr_convergence}) for all $\mbph \in C^\infty_c(\Omega)$,  hence
\begin{equation}
\begin{aligned}
\lim_{h \rightarrow 0 } \bigg|  \int_\Omega \Big(G_h(\nabla \mbu_h)  - \nabla \nabla \mbu\Big) \cdot \mbph \bigg| 
\le  
\lim_{h \rightarrow 0 } \bigg|  \int_\Omega (\nabla \mbu_h  -  \nabla \mbu)\cdot (\nabla \cdot \mbph )\bigg|
= 0. \nonumber
\end{aligned}
\end{equation}

\end{proof}

\end{Corollary}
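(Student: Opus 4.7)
The plan is to identify the weak limit in $L^2(\Omega)^{2\times 2\times 2}$ by combining a uniform bound on $G_h(\nabla \mbu_h)$ with the distributional identification already provided by Lemma~\ref{Lemma:Weak_D_gradient_weak_limit}. The proof in the excerpt is a short chain of implications, so the work is mostly in setting up the standard weak-compactness argument.

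First I would observe that, by the hypothesis that $|\mbu_h|_{H^2(\Omega, T_h)}$ is uniformly bounded, the bound on the discrete gradient (\ref{eq:discrete_gradient_bound}) yields
\begin{equation*}
\norm{G_h(\nabla \mbu_h)}_{L^2(\Omega)^{2\times 2\times 2}} \le C |\mbu_h|_{H^2(\Omega, T_h)} \le C',
\end{equation*}
uniformly in $h$. Since $L^2(\Omega)^{2\times 2\times 2}$ is reflexive, Banach-Alaoglu supplies a subsequence (not relabeled) and a limit $\mbG \in L^2(\Omega)^{2\times 2\times 2}$ such that $G_h(\nabla \mbu_h) \rightharpoonup \mbG$ weakly.

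Next I would identify $\mbG$ with $\nabla \nabla \mbu$. For an arbitrary test field $\mbph \in C_c^\infty(\Omega)^{2\times 2\times 2}$, Lemma~\ref{Lemma:Weak_D_gradient_weak_limit} gives
\begin{equation*}
\int_\Omega \mbG \cdot \mbph = \lim_{h \to 0}\int_\Omega G_h(\nabla \mbu_h)\cdot \mbph = -\int_\Omega \nabla \mbu \cdot (\nabla \cdot \mbph).
\end{equation*}
Using the extra assumption $\mbu \in H^2(\Omega)^2$, I would integrate by parts once more; since $\mbph$ has compact support there are no boundary contributions, and the right-hand side equals $\int_\Omega \nabla\nabla \mbu \cdot \mbph$. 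Because $C_c^\infty(\Omega)^{2\times 2\times 2}$ is dense in $L^2(\Omega)^{2\times 2\times 2}$, this forces $\mbG = \nabla\nabla \mbu$ almost everywhere.

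Finally, I would upgrade the subsequential convergence to convergence of the full sequence via the standard Urysohn-type argument: every subsequence of $(G_h(\nabla \mbu_h))$ admits, by the uniform $L^2$ bound, a further weakly convergent subsequence, and the identification above shows that every such weak limit is $\nabla\nabla \mbu$. Uniqueness of the weak limit thus implies $G_h(\nabla \mbu_h) \rightharpoonup \nabla\nabla \mbu$ in $L^2(\Omega)^{2\times 2\times 2}$. The only subtle point here is making sure the $L^2$ boundedness really does come from the hypothesis (which it does, via the discrete gradient bound), and that the density of $C_c^\infty$ test fields in $L^2$ suffices to upgrade from distributional to weak $L^2$ identification — both are essentially routine once the earlier lemmas are in hand, so no serious obstacle is expected.
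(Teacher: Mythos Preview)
Your argument is correct and follows the same basic idea as the paper: use Lemma~\ref{Lemma:Weak_D_gradient_weak_limit} together with the assumption $\mbu\in H^2(\Omega)^2$ (and an integration by parts) to identify the limit against smooth test fields. Your write-up is actually more complete than the paper's one-line proof: the paper only verifies that $\int_\Omega(G_h(\nabla\mbu_h)-\nabla\nabla\mbu)\cdot\mbph\to 0$ for $\mbph\in C_c^\infty(\Omega)^{2\times2\times2}$ and stops there, leaving implicit the passage to weak $L^2$ convergence. You supply that missing step by invoking the uniform bound (\ref{eq:discrete_gradient_bound}) to obtain weak compactness in $L^2$, identifying the limit, and then upgrading to the full sequence via the Urysohn argument. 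This is precisely the mechanism the paper spells out later in the proof of Proposition~\ref{Proposition:Weak_D_gradient_conv}, so nothing is lost and your version is self-contained.
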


\begin{Theorem}[The $\liminf$ inequality.]
\label{Theorem:liminf}

Assume that $ {\alpha} > C_R$, i.e. that the  parameter of the discrete energy function
is larger than the constant of (\ref{eq:R_bound}). Also, let the penalty term
satisfy condition (\ref{pen:polynomial_growth}).
Then for all $\mbu \in \mathbb{A}(\Omega)$ and all sequences 
$(\mbu_h)\subset \mathbb{A}^q_h(\Omega)$
 such that $\mbu_h \rightarrow \mbu$ in $H^1(\Omega)$  it holds that
 \begin{align}
 \Psi[\mbu] \leq \liminf\limits_{h\rightarrow0} \Psi_h[\mbu_h]. 
\end{align}
\end{Theorem}
\begin{proof}

 We assume there is a subsequence, still denoted by $\mbu_h$, such 
 that $\Psi_h[\mbu_h] \leq C$ uniformly in $h$,
  otherwise $\Psi[\mbu] \leq \liminf\limits_{h\rightarrow0}
 \Psi_h[\mbu_h] = +\infty$. The following steps conclude the proof:
\begin{enumerate}[\labelsep=0.2em 1.]
\item From Proposition \ref{equi-coercivity}, the uniform bound $\Psi_h[\mbu_h] \le C$ implies 
that $|\mbu|_{H^2(\Omega, T_h)}$ and $\norm{\mbu_h}_{H^1(\Omega)}$ are uniformly bounded.

 \item Corollary   \ref{cor:discrete_gradient_weak_conv} implies 
 $G_h(\nabla \mbu_h)  \rightharpoonup \nabla \nabla \mbu$ in $L^2(\Omega)^{2 \times 2 \times 2}$.

 \item The term $\int_\Omega |G_h|^2$ is convex which
 implies weak lower semicontinuity \cite{dacorogna2007direct}: 
 $\liminf\limits_{h\rightarrow0}
\int_\Omega |G_h|^2 \ge \int_\Omega |\nabla \nabla \mbu|^2$.

\item From the Poincar\'e inequality for broken Sobolev spaces, (\ref{Poincare_ineq}),
 there exist a constant $c$ independent of $h$  such that for all $m \in [1, +\infty)$ it holds 
\begin{equation}
\begin{aligned}
\norm{\nabla \mbu_h}^2_{L^m(\Omega)^{2 \times 2 }} &\le 
c  \Big( | \mbu_h|_{H^2(\Omega, T_h)}^2 +
\Big|\frac{1}{|\Omega|} \int_\Omega \nabla \mbu_h \Big|^2 \Big) 
\\
& \le
C \Big( | \mbu_h|_{H^2(\Omega, T_h)}^2 +  
\norm{\nabla \mbu_h}_{L_2(\Omega)^{2 \times 2}}^2 \Big) 
 { <+\infty,}
\end{aligned}
\end{equation}
where the last bound holds from \textit{step 1}.

\item Finally, the assumed convergence of $\mbu_h$ to $\mbu$ in $H^1(\Omega)^2$,
 Lemma~\ref{Lemma:lo_convergence0} and Remark~\ref{Remark:penalty_convergence}
ensure the convergence of the remaining terms,
i.e. $\int_\Omega W(\nabla \mbu_h) + \Phi(\nabla \mbu_h) \rightarrow 
\int_\Omega W(\nabla \mbu) +  \Phi(\nabla \mbu)$.
\end{enumerate}
As a result we deduce that $\Psi[\mbu] \leq \liminf\limits_{h\rightarrow0} \Psi_h[\mbu_h]$.
\end{proof}

 \subsection{The $\limsup$ inequality}

In this section we focus on the $\limsup$ inequality. Given $\mbu \in \mathbb{A}(\Omega)$, we would like to prove the existence of  a sequence $(\mbu_h) \subset \mathbb{A}_h^q(\Omega)$, 
such that $\mbu_{h} \rightarrow \mbu$ in $H^1(\Omega)$ and 
$ \Psi[\mbu] \geq \limsup\limits \Psi_{h}[\mbu_{h}]$ as $h \rightarrow 0$. 
To this end, choose a sequence of smooth functions $\mbu_\delta$ such that
 $\mbu_\delta \rightarrow \mbu$ in $H^2(\Omega)$ and we define $\mbu_h$ to be 
 an appropriate interpolant of $\mbu_\delta$,  $I^q_h \mbu_\delta$, for a chosen $\delta$.
 A similar strategy was used in \cite{bartels2017bilayer}. We start with the following Proposition:

 \begin{prop}
\label{prop:jump_vanish} 
 For $\mbu \in H^s(\Omega)^2$, if $s > 2$ and $q \ge 2$, there exist  constants $C_1, C_2 >0$ 
 such that
 \begin{align}
&\sum_{e \in E_h^i} \frac{1}{h_e}\int_{e} |\jumpop{\nabla I_h^q \mbu} |^2
\le C_1 h^{2s-4}|\mbu|^2_{H^s(\Omega)} \quad \mbox{and}\label{eq:Intrp_jump1}
\\
 &\sum_{e \in E_h^i}\int_{e}\dgal{ \nabla \nabla I^q_h \mbu } \cdot 
 \jumpop{\nabla I^q_h \mbu\otimes \mbn_e} \le C_2 h^{s-2}|\mbu|_{H^2(\Omega)} |\mbu|_{H^s(\Omega)}. \label{eq:Intrp_jump2}
 \end{align}
 \end{prop}
\begin{proof} 
 To simplify the notation let $\mbv_h := I^q_h \mbu$. First we study  (\ref{eq:Intrp_jump1}). Adding and 
subtracting the term $\nabla \mbu$, using 
\tred{ the error estimates  of Corollary \ref{est_e_K}, yields}
 \begin{equation}
  \begin{aligned}
   \int_{e} |\jumpop{\nabla \mbv_{h} }|^2 
& \le 2 \int_{e} |\nabla \mbv_{h}^+ - \nabla \mbu|^2 + 2 \int_{e} |\nabla 
\mbv_{h}^- - \nabla \mbu|^2 
\leq Ch_e^{2s-3} |\mbu|^2_{H^s(K^+_e\bigcup K^-_e) },
\end{aligned}
\label{Intrp_jump1_prof1}
 \end{equation}
where $K^+_e,K^-_e$ denote the distinct elements that share the edge $e$, i.e.
$e = K^+_e \cap K^-_e$. Summing over all edges $e \in E^i_h$ 
inequality (\ref{Intrp_jump1_prof1}) gives  
 \begin{equation}
  \begin{aligned}
   \sum_{e \in E_h^i} h_e^{-1}\int_{e} |\jumpop{\nabla \mbv_h}|^2 
   &\le c
   \sum_{e \in E_h^i} h_e^{2s-4} |\mbu|^2_{H^s(K^+_e\bigcup K^-_e) } 
   \\
    &= c\sum_{e \in E_h^i} h_e^{2s-4} \Bigl(
 |\mbu|^2_{H^s(K^+_e) } + |\mbu|^2_{H^s(K^-_e) }\Bigr)
  {= C  h^{2s-4} |\mbu|^2_{H^s(\Omega) }.}
  \end{aligned}
 \end{equation}

To prove (\ref{eq:Intrp_jump2}) notice that
\begin{equation}
\begin{aligned}
&\int_{e} \dgal{ \nabla \nabla \mbv_h } \cdot \jumpop{\nabla \mbv_h \otimes \mbn_e}
= \frac{1}{2}\int_{e} \Bigl( \nabla \nabla \mbv^+_h
 + \nabla \nabla \mbv^-_h \Bigr) \cdot 
 \jumpop{\nabla \mbv_h \otimes \mbn_e }
\\
=& \frac{1}{2}\int_{e}  \nabla \nabla \mbv^+_h \cdot 
\jumpop{\nabla \mbv_h \otimes \mbn_e }
 + \nabla \nabla \mbv^-_h  \cdot 
 \jumpop{\nabla \mbv_h \otimes \mbn_e}. 
\end{aligned}
\end{equation}
The Cauchy Schwarz inequality and the discrete trace inequality (\ref{eq:trace_ineq}) imply
\begin{equation}
\begin{aligned}
&\int_{e}  \nabla \nabla \mbv^+_h \cdot \jumpop{\nabla \mbv_h \otimes \mbn_e }
\le \Big( \int_{e}  |\nabla \nabla \mbv^+_h|^2 \Big)^{1/2}
\Big( \int_{e}  |\jumpop{\nabla \mbv_h} |^2 \Big)^{1/2} 
\\ \le& 
\frac{C}{\sqrt{h_e}} \Big( \int_{K^+_e}  |\nabla \nabla \mbv_h|^2 \Big)^{1/2}
\Big( \int_{e}  |\jumpop{\nabla \mbv_h} |^2 \Big)^{1/2} 
 = 
 {C \Big( \int_{K^+_e}  |\nabla \nabla \mbv_h|^2 \Big)^{1/2}
\Big( \frac{1}{h_e} \int_{e}  |\jumpop{\nabla \mbv_h} |^2 \Big)^{1/2}.}
\end{aligned}
\label{div_term+}
\end{equation}
Therefore
\begin{equation}
\begin{aligned}
\Big| \sum_{e \in E_h^i}\int_{e}  \nabla \nabla \mbv^+_h \cdot \jumpop{\nabla \mbv_h \otimes \mbn_e }  \Big|
  \le&
   {c \sum_{e\in E_h^i} \Big( \int_{K^+_e}  |\nabla \nabla \mbv_h|^2 \Big)^{1/2}
\Big( \frac{1}{h_e} \int_{e}  |\jumpop{\nabla \mbv_h} |^2 \Big)^{1/2}}
\\ \le& 
c \Big(\sum_{e\in E_h^i}  \int_{K^+_e}  |\nabla \nabla \mbv_h|^2 \Big)^{1/2}
\Big( \sum_{e\in E_h^i} \frac{1}{h_e} \int_{e}  |\jumpop{\nabla \mbv_h} |^2 \Big)^{1/2}.
\end{aligned}
\label{jump2conv}
\end{equation}
However,
\begin{equation}
\begin{aligned}
 &\sum_{e\in E_h^i}  \int_{K^+_e}  |\nabla \nabla \mbv_h|^2 
  { \le 2 \sum_{e\in E_h^i}  \int_{K^+_e}  |\nabla \nabla \mbv_h - \nabla \nabla \mbu|^2
   +  2\sum_{e\in E_h^i}  \int_{K^+_e}|\nabla \nabla \mbu|^2 }
  \\ \le& 
2\sum_{e\in E_h^i} c |\mbu|^2_{H^2(K_e^+)}     + c_2 |\mbu|^2_{H^2(\Omega)}
   \le 
 { C  |\mbu|^2_{H^2(\Omega)}    + c_2 |\mbu|^2_{H^2(\Omega)}
 \le c_3 |\mbu|^2_{H^2(\Omega)} }
\end{aligned}
\end{equation}
where we have used the error estimates of (\ref{error_est}). Consequently (\ref{jump2conv})
becomes
\begin{equation}
\begin{aligned}
\Big| \sum_{e \in E_h^i}\int_{e}  \nabla \nabla \mbv^+_h \cdot \jumpop{\nabla \mbv_h\otimes \mbn_e}   \Big|
\le& 
 {
C |\mbu|_{H^2(\Omega)} 
\Big( \sum_{e\in E_h^i} \frac{1}{h_e} \int_{e}  |\jumpop{\nabla \mbv_h} |^2 \Big)^{1/2}  }
\\ \le&
c h^{s-2}|\mbu|_{H^2(\Omega)} |\mbu|_{H^s(\Omega)} .
\end{aligned}
\end{equation}

Similarly we show that
\begin{equation}
\sum_{e \in E_h^i} \int_{e}  \nabla \nabla \mbv^-_h \cdot \jumpop{\nabla \mbv_h \otimes \mbn_e } 
\le c h^{s-2}|\mbu|_{H^2(\Omega)} |\mbu|_{H^s(\Omega)}
\label{div_term-}
\end{equation}
\end{proof}

 \begin{Theorem}[The $\limsup$ inequality.]
\label{Theorem:limsup}
Let the penalty function $\Phi$ satisfy condition (\ref{pen:polynomial_growth}).
The following property holds:
 For all $\mbu \in\mathbb{A}(\Omega)$, there exists a sequence
   $(\mbu_h)_{h>0}$ with $\mbu_h \in \mathbb{A}^q_h(\Omega)$,  such that 
$\mbu_h \rightarrow \mbu$ in $H^1(\Omega)^2$ and 
\begin{align}
 \Psi[\mbu] \geq \limsup\limits_{h\rightarrow 0} \Psi_h[\mbu_h]. 
\end{align}
\end{Theorem} 
\begin{proof}
 Given $\mbu \in \mathbb{A}(\Omega)$ we construct an appropriate sequence \tred{$(\mbu_h)_{h>0}\subset \mathbb{A}^q_h(\Omega)$,} the recovery sequence, such that
 $\norm{\mbu_h - \mbu}_{H^1(\Omega)^2} \rightarrow 0$ and $ \lim_{h\rightarrow 0} \Psi_h[\mbu_h] = \Psi[\mbu] $.
 
 In particular, we approximate $\mbu$ via mollification by a sequence of smooth functions in $\overline{\Omega}$
 to find that, for all $\delta > 0$, 
there exists \tred{$\mbu _\delta\in H^3(\Omega)^2\cap \mathbb{A}(\Omega)$} such that 
\begin{align}
\norm{\mbu_\delta - \mbu}_{H^2(\Omega)} < c\delta 
\quad \text{and} \quad
|\mbu_\delta|_{H^3(\Omega)} < \frac{c}{\delta} |\mbu|_{H^2(\Omega)}.
\label{ud_conv}
\end{align}
Recall, $\mbu_\delta =  \mbu = \mbg$  and  $\nabla \mbu_\delta = \nabla \mbu = \nabla \mbg, $ on $\partial\Omega$, where 
$\mbg \in H^3(\Omega)$ by the definition of $\mathbb{A}(\Omega)$.
Here $c$ is independent of $\delta$.

Next, define $\mbu_{h, \delta} := I^q_h \mbu_\delta \in H^1(\Omega)^2$,
noting that $\mbu_{h, \delta} \in \mathbb{A}^q_h(\Omega)$.
From the error estimates in (\ref{error_est}) and the fact that $q \ge 2$ we find that
\begin{align}
\norm{\mbu_{h, \delta}  -\mbu_\delta }_{H^1(\Omega)} &\leq c h |\mbu_\delta |_{H^2(\Omega)}
\label{eq:uhd_error0}
\\
|\mbu_{h, \delta}  -\mbu_\delta |_{H^2(K)} &\leq C |\mbu_\delta |_{H^2(K)}, \quad  \text{ } \text{ }K \in T_h,
\label{eq:uhd_error1}
\\
|\mbu_{h, \delta}  -\mbu_\delta |_{H^2(K)} &\leq C h |\mbu_\delta |_{H^3(K)}, \quad K \in T_h.
\label{eq:uhd_error}
\end{align}

Inequality (\ref{ud_conv}) and the error estimate of (\ref{eq:uhd_error0}) imply that
\begin{equation}
\begin{aligned}
 &\norm{\mbu_{h, \delta}  -\mbu }_{H^1(\Omega)} 
 \leq \norm{\mbu_{h, \delta}  -\mbu_\delta }_{H^1(\Omega)} 
+ \norm{ \mbu_\delta  -\mbu }_{H^1(\Omega) } 
\leq  h|\mbu_\delta |_{H^2(\Omega)} + \norm{ \mbu_\delta  -\mbu }_{H^2(\Omega) }  \\
& 
 {\leq   h(  |\mbu_\delta  -\mbu|_{H^2(\Omega)} + |\mbu |_{H^2(\Omega)} )+ 
\norm{ \mbu_\delta  -\mbu }_{H^2(\Omega) } } 
\leq  h(  \delta + |\mbu |_{H^2(\Omega)} )+ 
\delta .
\end{aligned}
\label{diag_arg}
\end{equation}
Choosing 
\begin{align}
\delta = \sqrt{h},
\label{delta2h}
\end{align}
we deduce that the sequence
 $(\mbu_{h, \delta_h})\subset V^q_h(\Omega) $
and $\mbu_{h, \delta_h} \rightarrow \mbu$ in $H^1(\Omega)$,
as $h \rightarrow 0$.
It remains to prove
\begin{align}
| \Psi_h[\mbu_{h, \delta_h}] - \Psi[\mbu] | \rightarrow 0, \quad \text{as } h\rightarrow 0.
\label{eq:en_limsup}
\end{align}

For the convergence of the lower order terms we use Lemma \ref{Lemma:lo_convergence0} and Remark \ref{Remark:penalty_convergence}.  Hence, it suffices to show that   
$\norm{\nabla \mbu_{h, \delta_h}}_{L^q(\Omega)^{2 \times 2}}$,
is uniformly bounded with respect to $h$. Indeed, 
Sobolev's embedding theorem
and classical error 
estimates, see inequality (\ref{eq:uhd_error1}), imply that  
 $\mbu_{h, \delta_h}$ is uniformly bounded in ${W^{1,q}(\Omega)} $ with respect
 to $h$, for all $q\in [1, \infty)$:
 \begin{equation}
\begin{aligned}
 &\norm{ \mbu_{h, \delta_h}}_{W^{1,q}(K)} 
 \leq \norm{\mbu_{\delta_h} - \mbu_{h, \delta_h} }_{W^{1,q}(K)} + \norm{\mbu_{\delta_h}}_{W^{1,q}(K)}
 \\
& \leq 
c_1 \norm{\mbu_{\delta_h} - \mbu_{h, \delta_h}}_{H^2(K)} +  c_2\norm{\mbu_{\delta_h}}_{H^2(K)} 
 { \leq 
   c \norm{\mbu_{\delta_h} }_{H^2(K)}. }
 \end{aligned}
 \end{equation}
To extend the bound over $\Omega$, assume that $q$ is integer
and $q \ge 2$; then the multinomial formula implies 
 \begin{equation}
\begin{aligned}
& \norm{ \mbu_{h, \delta_h}}_{W^{1,q}(\Omega)} = 
 \Big( \sum_{K \in T_{h} }\norm{ \mbu_{h, \delta_h} }_{W^{1,q}(K)} ^q \Big)^{1/q}
 {\le \Big(  c^q  \sum_{K \in T_{h} } \norm{\mbu_{\delta_h} }_{H^2(K)}^q \Big)^{1/q} }
 \\
 &\le c  \Big( \big(\sum_{K \in T_{h} } \norm{\mbu_{\delta_h}}_{H^2(K)}^2 \big)^{q/2} 
 \Big)^{1/q}
  { =  c   \norm{\mbu_{\delta_h}}_{H^2(\Omega)} }
 \\
 & \le c \big( \norm{\mbu_{\delta_h} - \mbu}_{H^2(\Omega)} + \norm{\mbu}_{H^2(\Omega)} \big)
 { \le c \big(\delta_h + \norm{\mbu}_{H^2(\Omega)} \big). }
 \end{aligned}
 \label{in:uhd_unbound}
 \end{equation}
The result can be extended for $q=1$ because $|\Omega| < +\infty$ and for $q \in [1, +\infty)$
using the interpolation bound, \cite{brezis2010functional}:
\[
 \norm{\mbu_h}_{L^q(\Omega) } \le \norm{\mbu_h}_{L^{\lfloor q \rfloor}(\Omega)}^\xi
 \norm{\mbu_h}_{L^{\lceil q \rceil }(\Omega)}^{1-\xi},
\]
where $\lfloor \cdot \rfloor, \lceil \cdot \rceil$ are the floor and ceiling integer functions respectively and 
$\xi = \frac{(\lceil q \rceil - q) \lfloor q \rfloor}{q}$.
 Notice that Lemma \ref{Lemma:lo_convergence0} and Remark \ref{Remark:penalty_convergence} give 
 \begin{align}
  W(\nabla \mbu_{h, \delta_h}) \rightarrow W(\nabla \mbu)
\text{ and } 
\Phi(\nabla \mbu_{h, \delta_h} ) \rightarrow \Phi(\nabla \mbu), 
\label{limsup:Phi_convergence}
 \end{align}
in $L^1(\Omega)$, as $h \rightarrow 0$. 
Also, since we have choosen $\delta_h = \sqrt{h}$, 
Proposition \ref{prop:jump_vanish},  ($\ref{ud_conv}$),
 implies that as  $h \rightarrow 0$ 
 \begin{equation}
 \begin{aligned}
\sum_{e \in E_{h}^i} \frac{1}{h_e}\int_{e} |\jumpop{\nabla  \mbu_{h, \delta_h}} |^2
\le C_1 h^{2}|\mbu_{\delta_h}|^2_{H^3(\Omega)} 
 {\le c \frac{h^2}{\delta_h^2}|\mbu|^2_{H^2(\Omega)}  \le c h|\mbu|^2_{H^2(\Omega)} \rightarrow 0.}
\label{limsup:jump_conv}
 \end{aligned}
 \end{equation}
 \tred{For the boundary terms we have:
 \begin{equation}
 \begin{aligned}
\sum_{e \in E_{h}^b} &\frac{1}{h_e}\int_{e} |\jumpop{\nabla  \mbu_{h, \delta_h}} |^2= \sum_{e \in E_{h}^b} \frac{1}{h_e}\int_{e} | \nabla  \mbu_{h, \delta_h} - \nabla I_h^q   \mbg |^2\\
&\leq 2 \sum_{e \in E_{h}^b} \frac{1}{h_e}\int_{e} \Big ( | \nabla  \mbu_{h, \delta_h} - \nabla  \mbu_{\delta_h}  |^2 + | \nabla  \mbg - \nabla I_h^q   \mbg |^2 \Big )
\le C_1 h^{2}( |\mbu_{\delta_h}|^2_{H^3(\Omega)}  +  |\mbg|^2_{H^3(\Omega)} )\\
& {\le c \frac{h^2}{\delta_h^2}|\mbu|^2_{H^2(\Omega)}  + c h^{2}   |\mbg|^2_{H^3(\Omega)}   \rightarrow 0.}
\label{limsup:jump_conv_2}
 \end{aligned}
 \end{equation}}
 Similarly, 
 \begin{equation}
 \begin{aligned}
\sum_{e \in E_{h}}\int_{e}\dgal{ \nabla \nabla  \mbu_{h, \delta_h} } \cdot 
 \jumpop{\nabla  \mbu_{h, \delta_h} \otimes \mbn_e} \le
  C_2 h|\mbu_{\delta_h}|_{H^2(\Omega)}( |\mbu_{\delta_h}|_{H^3(\Omega)}  +  |\mbg|_{H^3(\Omega)} )
  \\
  \le  C h^{1/2}\left( |\mbu|_{H^2(\Omega)} + c\delta_h \right)( |\mbu_{\delta_h}|_{H^2(\Omega)}  +  |\mbg|_{H^3(\Omega)} )\rightarrow 0.
 \end{aligned}
 \end{equation}

 For the remaining higher order terms, we work similarly as before. We begin, showing an inequality
 for a given element $K$ using the error estimates of (\ref{eq:uhd_error1}) and (\ref{eq:uhd_error}):
\begin{equation}
 \begin{aligned}
   \big| |\mbu_{\delta_h}|^2_{H^2(K)} -|\mbu_{h, \delta_h}|^2_{H^2(K)} \big| 
  =&
  \big| |\mbu_{\delta_h}|_{H^2(K)} -|\mbu_{h, \delta_h}|_{H^2(K)} \big|
  \big(|\mbu_{\delta_h}|_{H^2(K)} +|\mbu_{h, \delta_h}|_{H^2(K)} \big) 
  \\
  \le&
  \big| |\mbu_{\delta_h}|_{H^2(K)} -|\mbu_{h, \delta_h}|_{H^2(K)} \big|
  \big(2|\mbu_{\delta_h}|_{H^2(K)} +|\mbu_{h, \delta_h} -\mbu_{\delta_h} |_{H^2(K)} \big) 
  \\
  \le&
  |\mbu_{\delta_h} -\mbu_{h, \delta_h}|_{H^2(K)}
  \big(2|\mbu_{\delta_h}|_{H^2(K)} + c_2|\mbu_{\delta_h}|_{H^2(K)} \big)
  \\
  \le&
  C_2 |\mbu_{\delta_h} -\mbu_{h, \delta_h}|_{H^2(K)}
  |\mbu_{\delta_h}|_{H^2(K)}
\le
 {  ch |\mbu_{\delta_h}|_{H^3(K)} |\mbu_{\delta_h}|_{H^2(K)}.  }
  \end{aligned}
  \nonumber
\end{equation}
Then summing over $K$ in $T_h$ and using the Cauchy-Schwarz inequality, gives:
\begin{equation}
 \begin{aligned}
  &\big| \sum_{K\in T_{h}}|\mbu_{\delta_h}|^2_{H^2(K)} -\sum_{K\in T_h}|\mbu_{h, \delta_h}|^2_{H^2(K)} \big| 
  \le 
  { \sum_{K\in T_{h}} \big| |\mbu_{\delta_h}|^2_{H^2(K)} -|\mbu_{h, \delta_h}|^2_{H^2(K)} \big| }
  \\
  \le&
\sum_{K\in T_{h}} \Big( 
  ch|\mbu_{\delta_h}|_{H^3(K)} |\mbu_{\delta_h}|_{H^2(K)} 
  \Big)
\le ch|\mbu_{\delta_h}|_{H^3(\Omega)} |\mbu_{\delta_h}|_{H^2(\Omega)} 
 {\le c\sqrt{h}|\mbu|_{H^2(\Omega)} |\mbu_{\delta_h}|_{H^2(\Omega)} \rightarrow 0,}
 \end{aligned}
\label{uhd_high_order}
\end{equation}
$\text{as } h \rightarrow 0$, where for the last inequality we have used ($\ref{ud_conv}$)
 and (\ref{delta2h}).
Furthermore, note that
\begin{equation}
 \begin{aligned}
  &\big| |\mbu_{\delta_h}|^2_{H^2(\Omega)} -|\mbu|^2_{H^2(\Omega)} \big| 
  \le |\mbu_{\delta_h} -\mbu|_{H^2(\Omega)}
  \big( |\mbu_{\delta_h}|_{H^2(\Omega)} +|\mbu|_{H^2(\Omega)} \big)
 \\
  &\le |\mbu_{\delta_h} -\mbu|_{H^2(\Omega)}
  \big( |\mbu_{\delta_h} -\mbu|_{H^2(\Omega)} +2|\mbu|_{H^2(\Omega)} \big)
 {  \le \delta_h  \big( \delta_h +2|\mbu|_{H^2(\Omega)} 
\big)
\le 2 \delta_h |\mbu|_{H^2(\Omega)} + \delta_h^2. }
  \end{aligned}
  \label{ud_high_order}
\end{equation}
Finally (\ref{uhd_high_order}) and  (\ref{ud_high_order}) give 
\begin{align}
\left|  \sum_{K \in T_{h}}|\mbu_{h, \delta_h}|^2_{H^2(K)} -|\mbu|^2_{H^2(\Omega)} \right| \rightarrow 0, 
\quad \text{as } h\rightarrow 0,
\end{align}
which concludes (\ref{eq:en_limsup}).
\end{proof}

\section{Compactness and Convergence of Discrete Minimizers}
\label{sec:compactness_convergence}

In this section our main task is to use the results of the previous section to show that under some
boundedness hypotheses on $\mbu_h$, a sequence of discrete minimizers $(\mbu_h)$
converges in $H^1(\Omega)$ to a global minimizer $\mbu$ of the continuous functional,
Theorem~\ref{Thm:min_convergence}. Such results are standard 
in the $\Gamma-$convergence literature, \cite{braides2002gamma,dal2012introduction},
 but the application in our setting is not straightforward. The main reason is that
in our case we need certain intermediate results, such as a discrete DG version of the 
Rellich-Kondrachov theorem, which we show in the sequel. We use related discrete
bounds derived previously in 
\cite{lew2004optimal,eymard2009discretization, di2010discrete, buffa2009compact}

We will use the \textit{total variation} of a function $v \in L^1(\Omega)$,
see \cite{evans2015measure}, defined as
\begin{align}
 |Dv|(\Omega) = \sup \left\{ \int_\Omega v (\nabla \cdot \mbphi) \,:\, \mbphi \in C_c^1(\Omega)^2,
\norm{ \mbphi}_{L^\infty(\Omega)} \le1\right\}.
\end{align}

The space of functions of \textit{bounded variation} in $\Omega$, denoted by $BV(\Omega)$, contains
all  $L^1(\Omega)$ functions with bounded total variation, i.e.,
\begin{align}
BV(\Omega) = \{ v \in L^1(\Omega)\,:\,| D v|(\Omega) < +\infty \}.
\label{def:BV}
\end{align}
The space $BV(\Omega)$ is endowed with the norm 
$\norm{v}_{BV} = \norm{v}_{L^1(\Omega)} +  |Dv|(\Omega)$.
The following inequality plays a key role in the desired compactness.

\begin{Lemma}[Bounds for the total variation]
\label{Lemma:total_variayion_bound}
 Let $\mbw \in V^q_h(\Omega)^2$. Then, there exist a constant $C$ independent of $h$ 
 such that
 \begin{align}
 |D \nabla \mbw|(\Omega) \le C |\mbw|_{H^2(\Omega, T_h)}.
\label{eq:total_variation_bound}
 \end{align}
A proof can be found in \cite[Theorem 3.26]{lew2004optimal} and a 
generalization in (\cite[Lemma 2]{buffa2009compact}.
It is based on the observation
\begin{equation}
\int_\Omega w_{i,j} \phi_{ijk, k} = \sum_{e \in E^i_h}\int_e \phi_{ijk} 
\jumpop{\nabla  \mbw \otimes \mbn_e}_{ijk} 
- \sum_{K \in T_h} \int_K w_{i,jk} \phi_{ijk},
\end{equation}
where $\mbphi \in C_c^1(\Omega)^{2 \times 2 \times 2}$ 
and appropriate bounds on the right-hand side.
\end{Lemma}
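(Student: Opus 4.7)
The plan is to bound the total variation of $\nabla \mbw$ by testing it against smooth compactly supported tensor fields $\mbphi \in C_c^1(\Omega)^{2\times 2\times 2}$ with $\norm{\mbphi}_{L^\infty(\Omega)} \le 1$, and then recognising that after piecewise integration by parts the resulting expression splits into a volume contribution carrying $\nabla_h \nabla \mbw$ and an edge contribution carrying the jumps $\jumpop{\nabla \mbw \otimes \mbn_e}$, both of which are exactly the ingredients of the broken seminorm $|\mbw|_{H^2(\Omega, T_h)}$.

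First I would take an arbitrary $\mbphi \in C_c^1(\Omega)^{2\times 2\times 2}$ with $\norm{\mbphi}_{L^\infty(\Omega)} \le 1$. Applying the divergence theorem elementwise on each $K \in T_h$ and summing gives exactly the identity stated in the lemma, namely
\begin{equation*}
\int_\Omega w_{i,j}\, \phi_{ijk,k} \;=\; \sum_{e \in E^i_h}\int_e \phi_{ijk}\, \jumpop{\nabla \mbw \otimes \mbn_e}_{ijk} \;-\; \sum_{K \in T_h}\int_K w_{i,jk}\, \phi_{ijk},
\end{equation*}
where the boundary edge contributions vanish because $\mbphi$ is compactly supported in $\Omega$, and the interior edge contributions assemble into the jump operator as in \eqref{tensor_jump_operator}.

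The second step is to bound the two terms on the right-hand side uniformly over admissible $\mbphi$. For the volume term, Cauchy--Schwarz together with $\norm{\mbphi}_{L^\infty} \le 1$ gives
\begin{equation*}
\Big|\sum_{K \in T_h}\int_K w_{i,jk}\, \phi_{ijk}\Big| \;\le\; |\Omega|^{1/2}\, \norm{\nabla_h \nabla \mbw}_{L^2(\Omega)}.
\end{equation*}
For each edge term, I would use $\norm{\mbphi}_{L^2(e)} \le h_e^{1/2}\norm{\mbphi}_{L^\infty(e)} \le h_e^{1/2}$ followed by Cauchy--Schwarz in $L^2(e)$, and then a discrete Cauchy--Schwarz over $e \in E_h^i$:
\begin{equation*}
\sum_{e \in E_h^i}\Big|\int_e \mbphi \cdot \jumpop{\nabla \mbw \otimes \mbn_e}\Big| \;\le\; \Big(\sum_{e \in E_h^i} h_e^{2}\Big)^{1/2}\Big(\sum_{e \in E_h^i} h_e^{-1}\norm{\jumpop{\nabla \mbw}}_{L^2(e)}^2\Big)^{1/2}.
\end{equation*}
Shape regularity together with \eqref{equ:he_b_hK} ensures that $\sum_{e \in E_h^i} h_e^2 \le C\sum_{K \in T_h} h_K^2 \le C'|\Omega|$ independently of $h$, which controls the first factor.

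Taking the supremum over all admissible $\mbphi$ and recalling the definition \eqref{eq:seminorm} of the broken seminorm, one combines both estimates to obtain
\begin{equation*}
|D\nabla \mbw|(\Omega) \;\le\; C\Big(\norm{\nabla_h \nabla \mbw}_{L^2(\Omega)}^2 + \sum_{e \in E_h^i} h_e^{-1}\norm{\jumpop{\nabla \mbw}}_{L^2(e)}^2\Big)^{1/2} \;=\; C\, |\mbw|_{H^2(\Omega, T_h)},
\end{equation*}
which is the claimed inequality. The only technically delicate point is justifying that the $h$-weighted factor $\sum_e h_e^2$ stays bounded uniformly in $h$; this is where shape regularity is essential, but it is a standard consequence of \eqref{shape_regular} and \eqref{equ:he_b_hK}. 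Everything else reduces to elementwise integration by parts and two Cauchy--Schwarz estimates, so no serious obstacle is expected.
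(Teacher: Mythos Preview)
Your proposal is correct and follows precisely the approach the paper outlines: the paper does not give a detailed proof but only records the integration-by-parts identity and states that the result follows from ``appropriate bounds on the right-hand side,'' referring to \cite{lew2004optimal,buffa2009compact} for details. Your Cauchy--Schwarz estimates on the volume and edge terms, together with the shape-regularity bound $\sum_{e} h_e^2 \le C|\Omega|$, are exactly those appropriate bounds.
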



 \begin{prop}[Discrete Rellich-Kondrachov]
 \label{prop:discr_Rellich-Kondrachov}
Let a sequence \tred{$(\mbu_h) \subset  \mathbb{A}^q_h(\Omega)$} be bounded, for $C>0$, as
\begin{align}
 \norm{\mbu_h}_{H^1(\Omega) } + |\mbu_h |_{H^2(\Omega, T_h)}  < C, \quad \text{for all } h > 0.
 \label{eq:norms_bound_compacntess}
\end{align}
Then $(\mbu_h)$ is relatively compact in $W^{1,p}(\Omega)^2$ for $1 \le p < +\infty$,
i.e. there exists a $\mbu \in W^{1,p}(\Omega)^2$ such that
\begin{align}
\mbu_h \rightarrow \mbu \quad \text{in } W^{1,p}(\Omega)^2,
\end{align}
up to a subsequence. 
 \end{prop}
\begin{proof}
Using the Sobolev embedding theorem, the discrete Poincar\'e inequality 
(\ref{Poincare_ineq})  and inequality (\ref{eq:norms_bound_compacntess}) 
we conclude that
\begin{equation}
\begin{aligned}
\norm{\mbu_h}_{L^r(\Omega)^2} &\le C \norm{\mbu_h}_{H^1(\Omega)^2} < C
\\
\norm{\nabla \mbu_h}_{L^r(\Omega)^{2 \times2}}^2 &\le c \big(   |\mbu_h 
|_{H^2(\Omega, T_h)}^2
+ \norm{ \nabla \mbu_h }_{L^2(\Omega)^{2 \times 2  } } ^2 \big) < C,
\end{aligned}
\label{eq:Wp_bounds}
\end{equation}
uniformly with respect to $h$, for all $r \in [1, +\infty)$, where $C$ is a positive constant independent of $h$.
 The space $W^{1,r}(\Omega)$ is reflexive for $r \in (1, +\infty)$. Therefore, for every 
bounded sequence there exists a subsequence, not relabeled, and a function $\mbu 
\in W^{1,r}(\Omega)$ such that 
\begin{align}
 \mbu_h \rightharpoonup \mbu \text{ in  } W^{1,r}(\Omega), 
 \text{for all }  r\in (1, +\infty).
 \label{eq:uh_weak_conv}
\end{align}
From the Rellich-Kondrachov  theorem, \cite[Theorem 9.16]{brezis2010functional}, and since $\dim{\Omega} =2$,
it is known that  $ {H^1(\Omega)  \subset \subset L^p(\Omega)} $, i.e. it is 
compactly embedded for $1 \le  p < +\infty$. In particular, $\mbu_h \rightarrow \mbu $ in $L^p(\Omega)$ for $p \in [1, +\infty)$.

It remains to prove that $(\nabla \mbu_h)$ is relatively compact in $L^p(\Omega)$.
Similar results have been proved in  
\cite{di2010discrete, buffa2009compact,eymard2009discretization},  
all of them are based  on a bound of the 
$BV$ norm, $\norm{\cdot}_{BV(\Omega)}$, from the higher order terms. To this end,
 from inequalities  (\ref{eq:total_variation_bound}) and (\ref{eq:norms_bound_compacntess})   
 it follows that $(\nabla \mbu_h)$ is uniformly bounded in $BV(\Omega)^{2\times 2}$. 
By standard embedding theorems, see for example \cite[Theorem 5.5]{evans2015measure}, $\left(\nabla \mbu_h \right)$ 
in $BV(\Omega)^{2 \times 2}$ is relatively compact in $L^1(\Omega)^{2 \times 2}$ and, up to 
a subsequence, there exists
$\mbw \in BV(\Omega)^{2 \times 2} $ such that 
\begin{align}
\nabla \mbu_h \rightarrow  \mbw,  \quad \text{in } L^1(\Omega)^{2 \times 2}.
\end{align}

Therefore, from the interpolation inequality, \cite{brezis2010functional},
 and (\ref{eq:Wp_bounds}), we obtain that
\begin{equation}
\begin{aligned}
\norm{\mbw - \nabla \mbu_h}_{L^p(\Omega)^{2 \times 2}}
& \le 
\norm{\mbw - \nabla \mbu_h}_{L^1(\Omega)^{2 \times 2}}^\theta
 \norm{\mbw - \nabla \mbu_h}_{L^r(\Omega)^{2 \times 2}}^{1 - \theta} 
\le 
 {C \norm{\mbw - \nabla \mbu_h}_{L^1(\Omega)^{2 \times 2}}^\theta \rightarrow 0, }
\end{aligned}
\end{equation}
where $p \in (1, r)$ and $\theta = \frac{r-p}{p(r-1)} \in (0,1)$.
From (\ref{eq:uh_weak_conv}) we conclude that $\mbu_h \rightarrow 
\mbu$ in $ W^{1,p}(\Omega)$ for $p \in [1,+\infty)$.
\end{proof}

The discrete Rellich-Kondrachov Theorem ensures that  there  exist $\mbu$ in $H^1(\Omega)^{2\times 2}$
such that $\mbu_h \rightarrow \mbu$ in $H^1(\Omega)$ under some boundedness 
hypotheses on $\mbu_h$. However, the proof that minimizers of the discrete problem
converge to a minimizer of the continuous problem would require higher
regularity on $\mbu$, 
i.e. $\mbu \in H^2(\Omega)^2$. Similar arguments were used previously in 
\cite{di2010discrete, buffa2009compact}.

  \begin{prop}[Regularity of the limit and Weak Convergence of the Discrete 
Gradient]
  \label{Proposition:Weak_D_gradient_conv}
  Let a sequence \tred{$(\mbu_h) \subset  \mathbb{A}^q_h(\Omega)$}
  with $\mbu_h \rightarrow \mbu$ in $H^1(\Omega)^2$.
If the sequence is bounded in the $H^2(\Omega, T_h)$ seminorm, then  
  \begin{equation}
  \begin{aligned}
  \mbu \in H^2(\Omega)^2
  \quad \text{ and } \quad
   {G_h(\nabla\mbu_h)  \rightharpoonup \nabla \nabla \mbu \text{ in }
  L^2(\Omega)^{2 \times 2 \times 2},}
  \end{aligned}
  \end{equation}
up to a subsequence.
In addition, 
$\mbu \in \mathbb{A}(\Omega)$.
%
\end{prop}
\begin{proof}

Here we adopt partially the proof of \cite{di2010discrete}. From inequality (\ref{eq:discrete_gradient_bound}), the discrete gradient $G_h(\nabla\mbu_h)$ is bounded by $|\mbu_h|_{H^2(\Omega,T_h)}$
  in the $L^2(\Omega)^{2 \times 2 \times 2}$ norm. Hence, there exists $\mbw \in L^2(\Omega)^{2 \times 2 \times 2}$ such that, up to a subsequence,
\begin{equation}
G_h(\nabla\mbu_h)  \rightharpoonup  \mbw  \text{ in } 
L^2(\Omega)^{2 \times 2 \times 2}.
\end{equation}
To prove that $\mbw = \nabla \nabla \mbu$, let $\mbph  \in 
C^\infty_c(\Omega)^{2 \times 2 \times 2}$. By Lemma \ref{Lemma:Weak_D_gradient_weak_limit}
 
\begin{equation}
\begin{aligned}
 \int_\Omega \mbw \cdot \mbph &= \lim_{h \rightarrow 0} \int_\Omega G_h(\nabla \mbu_h) \cdot \mbph  
 =- \int_\Omega \nabla \mbu \cdot  (\nabla \cdot \mbph),
\end{aligned}
\end{equation}
which means that $\mbw = \nabla \nabla \mbu$.

It remains to prove that $\mbu \in \mathbb{A}(\Omega)$. If $\mbu_h\in \mathbb{A}^q_h(\Omega)$
then $\restr{\mbu_h}{\partial \Omega} = \restr{(I^q_h \mbg)}{\partial \Omega}$ and $\mbg \in  {H^3(\Omega)^2}$.
Classical interpolation error estimates, 
see inequality (\ref{ineq:error_on_e}), give
\begin{equation}
\begin{aligned}
\norm{\mbu_h -  \mbg}_{L^2(\partial \Omega)} ^2 &= 
\norm{I^q_h \mbg -  \mbg}_{L^2(\partial \Omega)} ^2 =
\sum_{e\in E^b_h} \norm{I^q_h\mbg -  \mbg}^2_{L^2(e)}  
\\
&
 {\le c \sum_{e\in E^b_h} h_K^3  |\mbg|^2_{H^2(K)}    
\le C h^3 |\mbg|^2_{H^2(\Omega)} \rightarrow 0. }
\end{aligned}
\end{equation}
Combining the last result with the trace inequality yields 
\begin{equation}
\begin{aligned}
\norm{\mbu -  \mbg}_{L^2(\partial \Omega)}  & \le
\norm{\mbu -  \mbu_h}_{L^2(\partial \Omega)}  + \norm{\mbu_h -  \mbg}_{L^2(\partial \Omega)} 
\\
&\le c \norm{\mbu -  \mbu_h}_{H^1(\Omega)}  + \norm{\mbu_h -  \mbg}_{L^2(\partial \Omega)}  
\rightarrow 0, 
\end{aligned}
\end{equation}
which means $\mbu = \mbg $ a.e on $\partial \Omega$.
\tred{For the $\nabla \mbg$ term we first notice that  
\begin{equation}
\begin{aligned}
\norm{\nabla \mbu_h - \nabla \mbg}_{L^2(\partial \Omega)} ^2 &\leq  
2 \norm{\nabla I^q_h \mbg -\nabla   \mbg}_{L^2(\partial \Omega)} ^2 + 2 \norm{\nabla \mbu_h - \nabla I^q_h \mbg }_{L^2(\partial \Omega)} ^2\\
&=
2\sum_{e\in E^b_h} \norm{\nabla I^q_h \mbg -\nabla   \mbg}^2_{L^2(e)}  +\norm{\nabla \mbu_h - \nabla I^q_h \mbg }_{L^2(e)} ^2
\\
&
\le C h^3 |\mbg|^2_{H^3(\Omega)}  + C h |\mbu_h|_{H^2(\Omega,T_h)} \rightarrow 0. 
\end{aligned}
\end{equation}
On the other hand, Lemma \ref{Theorem:Trace_inequality} implies ($K_e$ is the element with boundary face $e$)
\begin{equation}
\begin{aligned}
\norm{\nabla \mbu_h - \nabla \mbu}_{L^2(\partial \Omega)} ^2 
&=
\sum_{e\in E^b_h} \norm{\nabla \mbu_h - \nabla \mbu}^2_{L^2(e)}  \leq C \sum_{e\in E^b_h} \norm{\nabla \mbu_h - \nabla \mbu}_{L^2(K_e)}  \norm{\nabla \mbu_h - \nabla \mbu}_{H^1(K_e)}
\\
&
\le C  \|  \mbu_h -   \mbu \|_{H^1(\Omega)}  (  \|  \mbu \|_{H^2(\Omega)} +  |\nabla \mbu_h  |_{H^1(\Omega)} +  |\mbu_h|_{H^2(\Omega,T_h)} ) \rightarrow 0. 
\end{aligned}
\end{equation}
The proof is thus complete. }
 \end{proof}

\begin{Theorem}[Convergence of discrete  absolute minimizers]
\label{Thm:min_convergence}
Assume that $ {\alpha} > C_R$, i.e. that the stabilization parameter is greater than the constant of (\ref{eq:R_bound}).
Let $(\mbu_h) \subset \mathbb{A}^q_h(\Omega)$ be a sequence
of  absolute minimizers of $\Psi_h$, i.e.,
\begin{align}
\Psi_h[\mbu_h] = \inf_{\mbw_h \in \mathbb{A}^q_h(\Omega)} \Psi_h[\mbw_h]. 
\label{eq:discrete_min}
\end{align}
If $\Psi_h[\mbu_h]$ is uniformly bounded then, up to a subsequence, there exists $\mbu \in \mathbb{A}(\Omega)$ such that
\begin{align}
\mbu_{h} \rightarrow \mbu,  \text{ in } H^1(\Omega)^2,
\text{ and } 
\Psi[\mbu] = \min_{\mbw \in \mathbb{A}(\Omega)} \Psi[\mbw].
\end{align}
\end{Theorem}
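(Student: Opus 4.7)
The plan is to apply the Fundamental Theorem of $\Gamma$-convergence, using in essence all the technical machinery built up in Sections 4--6. The architecture has three pieces: extract a limit from the equi-coercivity and the discrete compactness result; verify that this limit is admissible; then use the $\liminf$ and $\limsup$ inequalities to compare the limiting energy with that of an arbitrary competitor.

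First I would extract a limit. Since $\Psi_h[\mbu_h]$ is uniformly bounded by assumption and $a > C_R$, Proposition \ref{equi-coercivity} gives uniform bounds on $\norm{\mbu_h}_{H^1(\Omega)^2}$ and $|\mbu_h|_{H^2(\Omega,T_h)}$. With these two bounds in hand, the discrete Rellich--Kondrachov result, Proposition \ref{prop:discr_Rellich-Kondrachov}, extracts a subsequence (not relabeled) and a $\mbu \in W^{1,p}(\Omega)^2$, $p \in [1,+\infty)$, such that $\mbu_h \to \mbu$ in $W^{1,p}(\Omega)^2$; in particular, $\mbu_h \to \mbu$ in $H^1(\Omega)^2$.

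Next I would upgrade the regularity and admissibility of the limit. Because the seminorm $|\mbu_h|_{H^2(\Omega,T_h)}$ remains uniformly bounded and $\mbu_h \in \mathbb{A}^q_h(\Omega)$, Proposition \ref{Proposition:Weak_D_gradient_conv} yields $\mbu \in H^2(\Omega)^2$, the weak convergence $G_h(\nabla \mbu_h) \rightharpoonup \nabla \nabla \mbu$ in $L^2(\Omega)^{2\times 2\times 2}$, and most importantly $\mbu \in \mathbb{A}(\Omega)$ (so the trace of $\mbu$ matches $\mbg$ on $\partial \Omega$).

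Finally I would run the standard $\Gamma$-convergence argument. Fix an arbitrary competitor $\mbv \in \mathbb{A}(\Omega)$. By the $\limsup$ inequality of Theorem \ref{Theorem:limsup}, there exists a recovery sequence $(\mbv_h) \subset \mathbb{A}_h^q(\Omega)$ with $\mbv_h \to \mbv$ in $H^1(\Omega)^2$ and $\limsup_{h\to 0} \Psi_h[\mbv_h] \le \Psi[\mbv]$. Since each $\mbu_h$ is a discrete absolute minimizer over $\mathbb{A}_h^q(\Omega)$, we have $\Psi_h[\mbu_h] \le \Psi_h[\mbv_h]$. Combining this with the $\liminf$ inequality of Theorem \ref{Theorem:liminf} applied to $(\mbu_h)$ gives
\begin{equation}
\Psi[\mbu] \le \liminf_{h\to 0} \Psi_h[\mbu_h] \le \limsup_{h\to 0} \Psi_h[\mbu_h] \le \limsup_{h\to 0} \Psi_h[\mbv_h] \le \Psi[\mbv].
\nonumber
\end{equation}
Since $\mbv \in \mathbb{A}(\Omega)$ was arbitrary, $\mbu$ minimizes $\Psi$ over $\mathbb{A}(\Omega)$, which completes the proof. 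No step here is a genuine obstacle since the hard work is done upstream; the only point requiring slight care is that the $\liminf$ hypothesis needs $\mbu \in \mathbb{A}(\Omega)$, which is precisely what Proposition \ref{Proposition:Weak_D_gradient_conv} provides, so the chain of inequalities is legitimate.
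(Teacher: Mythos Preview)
Your proposal is correct and mirrors the paper's own proof essentially step for step: equi-coercivity (Proposition~\ref{equi-coercivity}) for uniform bounds, discrete Rellich--Kondrachov (Proposition~\ref{prop:discr_Rellich-Kondrachov}) for compactness, Proposition~\ref{Proposition:Weak_D_gradient_conv} for regularity and admissibility of the limit, and then the standard $\liminf$/$\limsup$ chain from Theorems~\ref{Theorem:liminf} and~\ref{Theorem:limsup}. There is nothing to add.
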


\begin{Remark}
We note that for each fixed $h>0$ the functional $\Psi_h$ admits a minimizer. Indeed, given an infimising sequence in the finite-dimensional space $\mathbb{A}^q_h(\Omega)$, by coercivity (Proposition \ref{equi-coercivity}) and the Poincar\'e inequality \eqref{Poincare_ineq}, it must be bounded in the norm
\[
\|\cdot\|^2 : = \|\cdot\|^2_{H^1(\Omega)} + |\cdot|^2_{H^2(\Omega,T_h)}.
\]
Finite-dimensionality of $\mathbb{A}^q_h(\Omega)$ implies strong convergence of the infimising sequence in the above norm and it is easy to check that the functional $\Psi_h$ is continuous with respect to this convergence.
\end{Remark}

\begin{proof}[Proof of Theorem \ref{Thm:min_convergence}]
The uniform bound for the discrete energies implies from the equi-coercivity property, 
Proposition \ref{equi-coercivity}, that
\begin{align}
 \norm{\mbu_h}_{H^1(\Omega)^2} + |\mbu_h|_{H^2(\Omega, T_h)} < C,  
\end{align} 
uniformly with respect to $h$.  The discrete Rellich-Kondrachov, Proposition 
\ref{prop:discr_Rellich-Kondrachov}, ensures that there exists $\mbu\in H^1(\Omega)^2$ 
such that $\mbu_h \rightarrow \mbu$ in 
$H^1(\Omega)^2$ up to a subsequence not relabeled here.
Since $|\mbu_h|_{H^2(\Omega, T_h)}$ is uniformly bounded,
Proposition  \ref{Proposition:Weak_D_gradient_conv} implies that 
$\mbu \in H^2(\Omega)^2$ and also $\mbu \in \mathbb{A}(\Omega)$.

To prove that $\mbu$ is a global minimizer of $\Psi$ we use the $\liminf$ and $\limsup$
inequalities, Theorems \ref{Theorem:liminf} and \ref{Theorem:limsup} respectively. 
Let $\mbw \in \mathbb{A}(\Omega)$, then the $\limsup$ inequality implies that  there exist
$\mbw_h \in \mathbb{A}^q_h(\Omega)$ such that 
\begin{align}
\mbw_h \rightarrow \mbw \text{ in } H^1(\Omega)^2 \quad \text{and} \quad 
 \limsup_{h \rightarrow 0} \Psi_h[\mbw_h] \le \Psi[ \mbw ].
\end{align}
Therefore, since $\mbu_h \rightarrow \mbu$ in $H^1(\Omega)$  the $\liminf$ inequality 
and the fact that $\mbu_h$ are absolute minimizers of the discrete problems imply that
\begin{align}
\Psi[ \mbu ] \le  \liminf_{h \rightarrow 0} \Psi_h[\mbu_h]  \le  \limsup_{h \rightarrow 0} \Psi_h[\mbu_h] 
\le  \limsup_{h \rightarrow 0} \Psi_h[\mbw_h] \le \Psi[ \mbw ], 
\end{align}
for all $\mbw \in \mathbb{A}(\Omega)$. Therefore $\mbu$ is an absolute minimizer of $\Psi$.
\end{proof}

\section{Incorporating Penalty terms with Exponential Growth}
\label{sec:Orlicz_embedding}

So far we have shown the convergence of discrete absolute minimizers to a global minimizer
of the continuous problem. The proof is based on the $\Gamma-$convergence of  
$\Psi_h$ to $\Psi$ and on discrete compactness results.
To penalize interpenetration of matter we have added a penalty function $\Phi$ in the 
total potential energy (\ref{equ:total_potential}), assuming that $\Phi$ 
has polynomial growth, see (\ref{pen:polynomial_growth}). 
One can notice that the polynomial growth penalizes also deformations where $J > 1$, 
 {$J = \det(\bm1 + \nabla \mbu)$ is the Jacobian determinant of the mapping}, 
and as a consequence can affect the material properties.

Using a penalty of the form 
\begin{align}
\Phi(\nabla \mbu) = \bar{\Phi}(J) = e^{ {A} \left(  {b}- J\right)}, \quad   0<  b <1, 
\label{equ:pen_assumption}
\end{align}
for large enough $ {A}>0$,  the penalty parameter contributes to the total potential energy when
$J <1$, thus an assumption as (\ref{equ:pen_assumption}) seems preferable.
 In addition, computational results related especially to densified phase 
 and its comparison to experimental data indicates that (\ref{equ:pen_assumption})
 is a better choice, for more details see \cite{grekasPhD}.

In this section we will assume that instead of  polynomial growth,
(\ref{equ:pen_assumption}) holds. However, employing a penalty function $\Phi$ with
 exponential growth, affects the proofs of the inequalities 
$\liminf$, Theorem \ref{Theorem:liminf}, and  $\limsup$, Theorem \ref{Theorem:limsup}.
The main technical difficulty addressed in this section is the proof of the $\limsup$ inequality
when (\ref{equ:pen_assumption}) is assumed. To show the analog of Theorem~\ref{Theorem:limsup},
and in particular that $\Phi$ is uniformly integrable one has to use appropriate 
Orlicz spaces and corresponding embedding results. To do this in
discrete DG spaces requires new ideas, which we describe in this section. 
As far as we know these bounds are the first embedding estimates for DG spaces
using the Orlicz framework.

Below we recall some definitions and basic properties for Orlicz spaces which we require. The 
reader is referred to \cite{orlicz_book} for a thorough review of Orlicz spaces.
Let $\varphi:\R\to[0,+\infty]$ be a continuous, convex and even function satisfying
\[
\lim_{t\to0}\frac{\varphi(t)}{t} = 0\mbox{ and }\lim_{t\to\infty}\frac{\varphi(t)}{t} = \infty.
\]
The Orlicz class $L_{\varphi}(\Omega)^2$ consists of all measurable functions $\mbu:\Omega\to\R^2$ such that
\[
\int_{\Omega} \varphi(|\mbu|) < \infty.
\] 
The Orlicz space $L_{\varphi^*}(\Omega)^2$ is the linear span of functions in $L_{\varphi}(\Omega)^2$ and it becomes a Banach space when equipped with the Luxemburg norm
\[
\|\mbu\|_{L_{\varphi^*}} = \inf\left\{k \ge 0\,:\,\int_{\Omega}\varphi(|\mbu|/k) \leq 1\right\}.
\]
\begin{Remark}\label{rem:orlicz}
    Let $E_{\varphi}(\Omega)^2$ denote the closure of $L^\infty(\Omega)^2$ in $L_{\varphi^*}(\Omega)^2$. 
    The convexity of $\varphi$ implies $E_{\varphi}(\Omega)^2 \subset L_{\varphi}(\Omega)^2$, see \cite{orlicz_book}. 
    Moreover, given a sequence $(\mbu_k)\subset L_{\varphi}(\Omega)^2$ and
     $\mbu\in L_{\varphi^*}(\Omega)^2$, we say that $\mbu_k$ is mean convergent to $\mbu$ if
    \[
    \int_{\Omega}\varphi(|\mbu_k-\mbu|)\,dx \to 0,\,\,k\to\infty.
    \]
    Norm convergence in $L_{\varphi^*}(\Omega)^2$ is stronger than mean convergence. 
\end{Remark}
Next, we equip the space  $\tV_h^{r}(\Omega)^2$ with the norn
\begin{align}
\norm{\mbw}^2_{H^1(\Omega, T_h)} = \norm{\mbw}^2_{L^2(\Omega)} + |\mbw|^2_{H^1(\Omega, T_h)},  
\end{align}
where
\begin{align}
|\mbw|^2_{H^1(\Omega, T_h)} =  \sum_{K \in T_h}\int_K | \nabla \mbw|^2 + \sum_{e \in E_h^i}
 \frac{1}{h_e}\int_e |\jumpop{ \mbw}|^2,  
 \end{align}
for all  $\mbw \in \tV_h^{r}(\Omega)^2$. Our strategy relies on proving an embedding
 theorem of the space $\tV_h^{r}(\Omega)^2$, 
for all $r\ge1$, into the Orlicz space $L_{\varphi^*}(\Omega)$ where
\[
\varphi(t) =  e^{|t|^2} - 1.
\]
This embedding is proved in Theorem~\ref{Orlicz_dg} below,
which extends Trudinger's embedding theorem for Orlicz spaces, \cite[Theorem 2]{trudinger1967imbeddings},
to the DG finite element setting. 



It will be useful to use  the reconstruction operator  of Karakashian and Pascal,  \cite{karakashian2003posteriori}. The next result is a local version of its approximation properties  established  in 
 \cite{karakashian2003posteriori}. 
\begin{Lemma} 
    \label{Lem:rec_bouds}
        Let $\omega_e(\varv)$ denote the set of edges that contain the node $\varv$, 
        i.e. $\omega_e(\varv) = \{e \in E^i_h | \varv \in e \}$.
        Then for $\mbu \in \tV_h^r (\Omega)^2$ there exists a reconstruction operator  
         $Q: \tV_h^r(\Omega)^2 \rightarrow V^r_h(\Omega)^2$ such that
    \begin{align}
       \sum_{K \in T_h}  \norm{\mbu -  Q\mbu}_{H^a(K)^2}^2 \le c  \sum_{e \in E_h^i} h_e^{1-2a} 
      \int_e |\jumpop{\mbu}|^2
      \label{rec_oper_error}
      \\
        \norm{\mbu - Q\mbu}_{L^\infty(K)^2}^2 \le c  \sum_{e \in \omega_e(K)} \frac{1}{h_e} 
\int_e |\jumpop{\mbu}|^2,
\label{rec_oper_Linferror_K}     
    \end{align}
    where $\omega_e(K) = \cup_{\varv \in K}\omega_e(\varv)$.
   \end{Lemma}

   \bigskip
   
 The proof requires an appropriate adaptation of  \cite[Theorem 2.1]{karakashian2004adaptive}, see also \cite{demlow2012pointwise}.  
   Let  $\omega_\varv = \{K_1, ..., K_{|\omega_\varv|} \}$, where every consequence pair share the edge 
   $e_i^\varv = K_i \cap K_{i+1}$, when $i=1, ..., |\omega_\varv| -1.$
%
Using the explicit definition of $Q,$ as an averaging operator,   \cite{karakashian2004adaptive}, appropriate  scaling arguments and the 
     inverse inequality 
   \begin{equation*}
   \begin{aligned}
  \norm{ \jumpop{\mbu} }_{L^\infty(e_i^\varv)}^2 
   \le \frac{c}{|e_i|}  \norm{ \jumpop{\mbu} }_{L^2(e_i^\varv)}^2, 
   \end{aligned} \nonumber
   \end{equation*} 
    the proof relies on establishing 
     \begin{equation}
     \begin{aligned}
      \norm{\mbu - Q\mbu}_{L^\infty(K)^2}^2 &
      \le 
       C\sum_{\varv \in K} \sum_{i=1}^{|\omega_\varv| -1 }  \frac{c}{|e_i^\varv |}  \norm{ \jumpop{\mbu} }_{L^2(e_i^\varv)}^2
     \
     \le C  \sum_{e \in \omega_e(K)} \frac{1}{h_e}  \int_e |\jumpop{\mbu}|^2. 
     \end{aligned}
     \end{equation}
For details see \cite {grekasPhD} and previous versions of this manuscript. The next result will be useful.

\begin{Lemma}
    \label{pointwise_bound}
   Let $\mbu \in \tV_h^r(\Omega)^2$,  for all $\mbx \in \Omega$ it holds
    \begin{align}
    |\mbu(\mbx)| \le C |\mbu|_{H^1(\Omega, T_h)} + |\hmbu(\mbx)|,
    \end{align}
    where $\hmbu = Q\mbu$ and $Q$ the reconstruction operator of Lemma~\ref{Lem:rec_bouds}.
    \begin{proof}
        For all  $\mbx \in \Omega$ we have 
        \begin{align}
        |\mbu(\mbx)| \le    |\mbu(\mbx) - \hmbu(\mbx) | + |\mbu(\mbx) | \le \norm{\mbu- \hmbu}_{L^\infty(\Omega)} 
        + |\hmbu(\mbx) |.
            \nonumber
        \end{align}
        $T_h$ is a triangulation of the domain $\Omega$, consequently there exists $K^*\in T_h$
        such that $\norm{\mbu- \hmbu}_{L^\infty(\Omega)} =   \norm{\mbu- \hmbu}_{L^\infty(K^*)}$.
        Using inequality (\ref{rec_oper_Linferror_K}) we deduce that
        \begin{align}
          \norm{\mbu- \hmbu}_{L^\infty(K^*)}^2 
        \le C \sum_{e \in E_h^i} \frac{1}{h_e}  \int_e |\jumpop{\mbu}|^2 \le C |\mbu|_{H^1(\Omega, T_h)}^2.
            \nonumber
        \end{align}      
        
    \end{proof}
\end{Lemma}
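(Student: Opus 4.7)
The plan is to insert and subtract the reconstructed function $\hmbu = Q\mbu$ and use the pointwise triangle inequality
\[
|\mbu(\mbx)| \le |\mbu(\mbx)-\hmbu(\mbx)| + |\hmbu(\mbx)|,
\]
so that the problem reduces to bounding $|\mbu(\mbx)-\hmbu(\mbx)|$ uniformly in $\mbx \in \Omega$ by $C\,|\mbu|_{H^1(\Omega,T_h)}$. Since $\mbu$ and $\hmbu$ are piecewise polynomial on the triangulation, the pointwise difference is controlled by the global $L^\infty$ norm of $\mbu-\hmbu$, and because $T_h$ is a finite partition, there exists some element $K^\star \in T_h$ realising $\norm{\mbu-\hmbu}_{L^\infty(\Omega)} = \norm{\mbu-\hmbu}_{L^\infty(K^\star)}$.

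At this point I would invoke the local $L^\infty$ estimate for the reconstruction operator proved in Lemma~\ref{Lem:rec_bouds}, namely
\[
\norm{\mbu - Q\mbu}_{L^\infty(K^\star)^2}^2 \le c \sum_{e \in \omega_e(K^\star)} \frac{1}{h_e}\int_e |\jumpop{\mbu}|^2.
\]
Since $\omega_e(K^\star) \subset E_h^i$, the right-hand side is bounded by the full jump contribution
\[
C \sum_{e \in E_h^i} \frac{1}{h_e}\int_e |\jumpop{\mbu}|^2,
\]
which in turn is dominated by $C\,|\mbu|^2_{H^1(\Omega,T_h)}$ by the very definition of the broken $H^1$ seminorm. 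Taking square roots and combining with the triangle inequality yields the claimed bound with a constant $C$ depending only on the shape regularity constant and the polynomial degree $r$ (both entering through Lemma~\ref{Lem:rec_bouds}).

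The only non-routine ingredient is the reconstruction estimate (\ref{rec_oper_Linferror_K}), which has already been established, so no genuine obstacle remains. The main point to be careful about is that the $L^\infty$ estimate must be available on a single element independent of $\mbx$, which is guaranteed by finiteness of $T_h$; the bound is then uniform in $\mbx$ because the right-hand side $|\mbu|_{H^1(\Omega,T_h)}$ is a global quantity independent of the point at which we evaluate.
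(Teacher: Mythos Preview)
Your proposal is correct and follows essentially the same route as the paper: triangle inequality with $\hmbu$, pass to the $L^\infty$ norm realised on some element $K^\star$, and then apply the local $L^\infty$ reconstruction estimate (\ref{rec_oper_Linferror_K}) together with the inclusion $\omega_e(K^\star)\subset E_h^i$ to bound by the full broken seminorm. If anything, your write-up is slightly more explicit than the paper's in justifying why the local edge sum can be replaced by the global one.
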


Next we state a crucial lemma, stated in  \cite[Lemma 1]{trudinger1967imbeddings}, for a complete proof of the embedding theorem.
\begin{Lemma}\label{lemma1}
    Let $\Omega \subset \R^2$  satisfy the cone condition and $\mbw\in H^1(\Omega)^2$. Then for a.e. $\mbx \in\Omega$ it holds that
    \[
    |\mbw( \mbx)| \leq C(\Omega) \left(\|\mbw\|_{L^1(\Omega)}+  
    \int_{\Omega}\frac{| \nabla \mbw(\mbxi) | }{|\mbx-\mbxi|}\,d\mbxi \right)
    \]
\end{Lemma}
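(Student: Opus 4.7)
The plan is to establish this classical pointwise representation for Sobolev functions by exploiting the cone condition to produce, for each $\mbx\in\Omega$, a cone $C_{\mbx}\subset\Omega$ with vertex at $\mbx$, uniform height $\rho$ and uniform angular aperture $\omega$. By density I would first prove the inequality for $\mbw\in C^1(\overline{\Omega})^2$; the extension to $H^1(\Omega)^2$ then follows by standard approximation together with the ACL characterization of Sobolev functions, which ensures that the fundamental-theorem-of-calculus identity below holds along a.e. radial segment.

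Fix $\mbx\in\Omega$. For any unit vector $\sigma$ in the angular aperture of $C_\mbx$ and any $r\in(0,\rho)$, I would start from the identity
\[
\mbw(\mbx) = \mbw(\mbx+r\sigma) - \int_0^r \nabla\mbw(\mbx+t\sigma)\,\sigma\,dt.
\]
Multiplying by the planar polar element $r\,dr\,d\sigma$ and integrating over the cone, the left-hand side becomes $|C_\mbx|\,\mbw(\mbx) = \tfrac{\omega\rho^2}{2}\,\mbw(\mbx)$, while the first term on the right equals $\int_{C_\mbx}\mbw(\mbxi)\,d\mbxi$, whose magnitude is controlled by $\|\mbw\|_{L^1(\Omega)}$. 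For the second term, I would swap the order of the $r$ and $t$ integrals by Fubini, producing the weight $\tfrac{\rho^2-t^2}{2}\le\tfrac{\rho^2}{2}$ in front of $\nabla\mbw(\mbx+t\sigma)\sigma$; then, recognising that $d\mbxi = t\,dt\,d\sigma$ and $|\mbx-\mbxi|=t$ in these polar coordinates, the remainder is bounded by $\tfrac{\rho^2}{2}\int_{C_\mbx}|\nabla\mbw(\mbxi)|/|\mbx-\mbxi|\,d\mbxi$. Dividing through by $\omega\rho^2/2$ and enlarging the domain of integration from $C_\mbx$ to $\Omega$ yields the claim, with a constant that depends only on $\omega$ and $\rho$.

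The main obstacle I anticipate is showing that the constant $C(\Omega)$ is genuinely independent of $\mbx$: this is precisely what the \emph{uniform} cone condition provides, by ensuring that $\omega$ and $\rho$ can be chosen independently of the base point. A secondary technical issue is the passage from $C^1$ to $H^1$: since the singular kernel $1/|\mbx-\mbxi|$ is locally integrable in two dimensions, the right-hand side defines a bounded linear operation on $H^1$, and the a.e. validity of the radial representation for $H^1$ functions (via ACL plus Fubini) allows the inequality to be transferred to the Sobolev setting without loss.
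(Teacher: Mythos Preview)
Your argument is correct and is the standard proof of this potential-theoretic estimate: integrate the fundamental theorem of calculus over a uniform cone in polar coordinates, apply Fubini to extract the $1/|\mbx-\mbxi|$ kernel, and use the uniformity of the cone parameters to make the constant independent of $\mbx$.

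The paper, however, does not supply its own proof of this lemma. It is merely quoted from \cite{trudinger1967imbeddings} (Trudinger's Lemma~1) and used as a black box in the proof of Theorem~\ref{Orlicz_dg}. Your reconstruction is essentially Trudinger's original argument (also the one found in standard references such as Adams' \emph{Sobolev Spaces}), so there is no methodological divergence to discuss: you have filled in exactly the proof that the paper cites but omits.
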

 We may now prove the embedding. 

\begin{Theorem}
    \label{Orlicz_dg}
    Let $\Omega\subset R^2$ satisfy the cone condition. For each $h>0$, the space $\tV^r_h(\Omega)^2$ is continuously embedded into the Orlicz space 
    $L_{\varphi^*}(\Omega)$ where 
    \[
    \varphi(t) = e^{t^2} - 1.
    \]
    Furthermore, for any $\psi$ such that $\psi(t) \leq \varphi(\lambda t)$ for some $\lambda>0$, the space 
    $\tV^r_h(\Omega)^2$ is continuously embedded, in the sense of mean convergence, into the Orlicz class $L_{\psi}(\Omega)$; i.e.,
     whenever $\norm{\mbu_k - \mbu}_{H^1(\Omega, T_h)}\to 0$ then
    \[
    \int_{\Omega} \psi(\mbu_k - \mbu) \,d\mbx \to 0.
    \]
\end{Theorem}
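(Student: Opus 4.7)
The plan is to reduce to the continuous Trudinger embedding by passing through the reconstruction operator $Q$ supplied by Lemma~\ref{Lem:rec_bouds}. Set $\hmbu := Q\mbu \in V_h^r(\Omega)^2 \subset H^1(\Omega)^2$. First I would prove the discrete-to-continuous stability estimate
\[
\norm{\hmbu}_{H^1(\Omega)} \le C\,\norm{\mbu}_{H^1(\Omega,T_h)},
\]
with $C$ independent of $h$: applying Lemma~\ref{Lem:rec_bouds} with $a=0$ and $a=1$ yields $\sum_K \norm{\mbu-\hmbu}^2_{L^2(K)} \le c h^2 |\mbu|^2_{H^1(\Omega,T_h)}$ and $\sum_K \norm{\nabla(\mbu-\hmbu)}^2_{L^2(K)} \le c |\mbu|^2_{H^1(\Omega,T_h)}$, and the triangle inequality gives the bound.

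Next, combining Lemma~\ref{pointwise_bound} with Lemma~\ref{lemma1} applied to $\hmbu$, I obtain for a.e.\ $\mbx\in\Omega$
\[
|\mbu(\mbx)| \le C_1\,\norm{\mbu}_{H^1(\Omega,T_h)} + C_2\int_\Omega \frac{|\nabla \hmbu(\mbxi)|}{|\mbx-\mbxi|}\,d\mbxi,
\]
where $|\nabla \hmbu|$ is controlled in $L^2(\Omega)$ by $\norm{\mbu}_{H^1(\Omega,T_h)}$. The classical Riesz-potential estimate in two dimensions states that convolution with $|\mbx|^{-1}$ maps $L^2(\Omega)$ into $L^p(\Omega)$ for every $p\in[1,\infty)$ with operator norm bounded by $C\,p^{1/2}$; applying this and the elementary bound $(a+b)^p \le 2^{p-1}(a^p+b^p)$ I conclude the uniform $L^p$-estimate
\[
\norm{\mbu}_{L^p(\Omega)} \le C\,p^{1/2}\,\norm{\mbu}_{H^1(\Omega,T_h)}, \qquad p\in[1,\infty),
\]
with $C$ independent of both $h$ and $p$.

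To finish the embedding, I set $k = \lambda\,\norm{\mbu}_{H^1(\Omega,T_h)}$ for a sufficiently large $\lambda$ and expand
\[
\int_\Omega \varphi(|\mbu|/k)\,d\mbx = \sum_{n=1}^\infty \frac{1}{n!}\,\frac{\norm{\mbu}_{L^{2n}(\Omega)}^{2n}}{k^{2n}} \le \sum_{n=1}^\infty \frac{1}{n!}\,\Bigl(\frac{C(2n)^{1/2}}{\lambda}\Bigr)^{2n}.
\]
By Stirling $(2n)^n/n! = \mathcal{O}\bigl((2e)^n/\sqrt{n}\bigr)$, so the series is convergent and $\le 1$ for $\lambda$ large, giving $\norm{\mbu}_{L_{\varphi^*}} \le k$, i.e.\ the continuous embedding with constant independent of $h$. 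The mean-convergence statement then follows by applying the same estimate to $\mbu_k-\mbu$: under the growth assumption $\psi(t)\le\varphi(\lambda t)$, termwise convergence in the Taylor series of $\psi(\lambda |\mbu_k-\mbu|)$ combined with the uniform-in-$k$ tail bound produced by the $L^p$ estimate above yields $\int_\Omega \psi(|\mbu_k-\mbu|)\to 0$.

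The main obstacle I anticipate is ensuring that the entire chain of estimates remains uniform in $h$; in particular, the term $C\,|\mbu|_{H^1(\Omega,T_h)}$ in Lemma~\ref{pointwise_bound} is a potential nuisance because, unlike in the smooth case, the pointwise bound on $\mbu$ is not purely a Riesz potential of a gradient. The care required is to separate the two contributions via $(a+b)^{2n}\le 2^{2n-1}(a^{2n}+b^{2n})$, absorbing the deterministic factor $2^{2n}$ into the choice of $\lambda$, so that both pieces are bounded by $C\,p^{1/2}\norm{\mbu}_{H^1(\Omega,T_h)}$ with the same $p^{1/2}$ growth that makes the Taylor series of $\varphi(t)=e^{t^2}-1$ summable.
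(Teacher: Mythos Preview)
Your proposal is correct and follows essentially the same route as the paper: pass to the continuous reconstruction $\hmbu=Q\mbu$, use Lemma~\ref{pointwise_bound} together with the pointwise Riesz-potential bound of Lemma~\ref{lemma1}, derive $\norm{\mbu}_{L^p}\le C\,p^{1/2}\norm{\mbu}_{H^1(\Omega,T_h)}$, and sum the Taylor series of $\varphi$. The paper obtains the $L^p$ estimate by duality (testing against $\mbf\in L^{p'}$ and estimating the double integral exactly as in Trudinger's original argument), whereas you invoke the critical Riesz-potential mapping property directly; these are equivalent formulations of the same estimate.

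The only substantive difference is in the mean-convergence step. The paper argues abstractly: since piecewise polynomials are bounded, $\tV^r_h(\Omega)^2\subset E_\varphi(\Omega)$ (the closure of $L^\infty$ in $L_{\varphi^*}$), and on $E_\varphi$ norm convergence implies mean convergence by Remark~\ref{rem:orlicz}. Your hands-on series argument also works, because once $\norm{\mbu_k-\mbu}_{H^1(\Omega,T_h)}$ is small enough the tail of $\sum_n \lambda^{2n}(2nC^2)^n\norm{\mbu_k-\mbu}_{H^1(\Omega,T_h)}^{2n}/n!$ is uniformly dominated by a convergent geometric series; the paper's route is shorter but yours is self-contained and avoids the Orlicz-space machinery.
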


\begin{proof}
    We exhibit the existence of constants $b=b(\mbu)>0$ and $C=C(\Omega)>0$ such that
    \begin{equation}\label{eq:appthm1_new}
    \int_{\Omega} e^{b|\mbu|^2} -1 \leq C(\Omega).
    \end{equation}
    To estimate the exponential of $|\mbu|^2$, we require to estimate all $L^q$ norms of $\mbu$ for $1\leq q < \infty$. 
    Since 
    \[
    \|\mbu\|_{L^q} = \sup_{f\in L^p} \frac{|\int_{\Omega}\mbu(\mbx)\cdot \mbf(\mbx)\,d\mbx|}
    {\|\mbf\|_{L^p}},
    \]
    using Lemma \ref{pointwise_bound}, we conclude
    \begin{equation}
    \begin{aligned}
    \int_{\Omega}|\mbf(\mbx)\mbu(\mbx)| 
    &\leq  \int_\Omega c |\mbf(\mbx)| |\mbu|_{H^1(\Omega, T_h)} +|\mbf(\mbx)| |\hmbu(\mbx)| 
    \\
    &\leq  C|\mbu|_{H^1(\Omega, T_h)}  |\Omega|^{1/q}\|\mbf\|_{L^p} 
   +  \int_\Omega |\mbf(\mbx)| |\hmbu(\mbx)|, 
    \label{eq:main_new}
    \end{aligned}
    \end{equation}
    where $\hmbu = Q \mbu$ and $\hmbu \in H^1(\Omega)^2$.
As in the  proof of \cite[Theorem 2]{trudinger1967imbeddings}
one can show that
    \begin{align}
     \int_{\Omega}|\hmbu(\mbx)| |\mbf(\mbx)|\,d\mbx
     \leq C C(\Omega)^{1/q} \norm{\hmbu}_{H^1(\Omega)} q^{1/2} \norm{\mbf}_{L^p(\Omega)}.
    \label{Lq_tu_bound}
    \end{align}
  For completeness we highlight some  details in the proof of  (\ref{Lq_tu_bound}) 
  following  \cite{trudinger1967imbeddings}.
Using Lemma \ref{lemma1}, we conclude
\begin{equation}
\begin{aligned}
\int_{\Omega}|\mbf(\mbx)\hmbu(\mbx)| 
&\leq C\left(\int_\Omega |\mbf(\mbx)|   \|\hmbu\|_{L^1} d\mbx+
\int_{\Omega}\int_\Omega\frac{|\mbf(\mbx)||\nabla \hmbu(\mbxi)|}{|\mbxi - \mbx|} d \mbxi d\mbx\right)
\\
&\le   C  \|\hmbu\|_{L^1}  |\Omega|^{1/q}\|\mbf\|_{L^p} 
+ C\int_{\Omega}\int_\Omega\frac{|\mbf(\mbx)||\nabla \hmbu(\mbxi)|}{|\mbxi - \mbx|} d \mbxi d\mbx.
\label{eq:main_old}
\end{aligned}
\end{equation}
Regarding the double integral above, applying  the Cauchy-Schwarz inequality gives 
\begin{align}
\label{eq:0_old}
\int_{\Omega}\int_\Omega &\frac{|\mbf( \mbx)| |\nabla \hmbu(\mbxi)|}{|\mbxi - \mbx|} d\mbxi d\mbx  
 \leq \left(\int_{\Omega}\int_\Omega\frac{|\mbf(\mbx)|}{|\mbx-\mbxi|^{2-1/q}}\right)^{1/2}
\left(\int_\Omega\int_\Omega\frac{|\nabla \hmbu(\mbxi)|^2|f(\mbx)|}{|\mbx-\mbxi |^{1/q}}\right)^{1/2}.
\end{align}
We estimate the two double integrals separately. Denoting by $d$ the diameter of $\Omega$, 
we have
\[
\int_{\Omega}\frac{1}{|\mbx-\mbxi|^{2-1/q}} d\mbxi \leq 
\int_{B_d(0)}| \mby|^{-2+1/q} \leq C \int_0^d r^{-2+1/q} r\, dr = C d^{1/q} q.
\]
Hence,
\[
\int_{\Omega}\int_\Omega\frac{|\mbf(\mbx)|}{| \mbx-\mbxi|^{2-1/q}} 
\leq C q d^{1/q} \int_\Omega |\mbf(\mbx)| \,d\mbx \leq C d^{1/q} 
|\Omega|^{1/q} \|\mbf\|_{L^p} q \leq C d^{3/q}  \|\mbf\|_{L^p} q.
\]
On the other hand, we find that
\[
\int_\Omega\frac{|\mbf(\mbx)|}{|\mbx-\mbxi |^{1/q}}\,d\mbx 
\leq \|\mbf \|_{L^p}\left(\int_{\Omega}| \mbx-\mbxi|^{-1}\right)^{1/q} 
\]
and, therefore as before,
\begin{equation}\label{eq:1a_old}
\int_\Omega\frac{|\mbf(\mbx)|}{| \mbx-\mbxi |^{1/q}}\,d\mbx \leq C^{1/q}d^{1/q} \|\mbf\|_{L^p}.
\end{equation}
Then, the second term in \eqref{eq:0_old} becomes
\begin{equation}\label{eq:2_old}
\int_\Omega\int_\Omega\frac{|\nabla \hmbu(\mbxi)|^2|\mbf(\mbx)|}
{|\mbx-\mbxi |^{1/q}} \leq C^{1/q}d^{1/q}\|\nabla \hmbu\|^2_{L^2} \|\mbf\|_{L^p}.
\end{equation}
Combining \eqref{eq:1a_old}-\eqref{eq:2_old} and \eqref{eq:0_old}, we deduce that
\[
\int_{\Omega}\int_\Omega\frac{|\mbf(\mbx)||\nabla \hmbu(\mbxi)|}{|\mbxi - \mbx|} 
\leq C C^{1/q}d^{2/q} \|\nabla \hmbu\|_{L^2} \|\mbf\|_{L^p} q^{1/2}.
\]    
Replacing the above bound in (\ref{eq:main_old}) we obtain (\ref{Lq_tu_bound}).

Our aim is  to bound (\ref{eq:main_new}) with respect to $\norm{\mbu}_{H^1(\Omega, T_h)}$.     
From (\ref{rec_oper_error})  the following bound holds 
\begin{equation}
\begin{aligned}
\norm{\hmbu|}^2_{H^1(\Omega)} &\le 2\sum_{K \in T_h}\norm{\hmbu - \mbu|}^2_{H^1(K)} +  2\sum_{K \in T_h}\norm{ \mbu}^2_{H^1(K)}
\\
&\le 
c \left(  \sum_{e \in E_h^i} h_e^{-1} \int_e |\jumpop{\mbu}|^2 + \sum_{K \in T_h}| \mbu|^2_{H^1(K)} + \norm{ \mbu}^2_{L^2(\Omega) } \right)
\\
&\le c \norm{\mbu}^2_{H^1(\Omega, T_h)}
      \nonumber
\end{aligned}
\end{equation}
Therefore  equation (\ref{Lq_tu_bound}) becomes
    \begin{align}
\int_{\Omega}|\hmbu(\mbx)| |\mbf(\mbx)|\,d\mbx
\leq c C(\Omega)^{1/q} \norm{\mbu}_{H^1(\Omega, T_h)} q^{1/2} \norm{\mbf}_{L^p(\Omega)}.
\label{Lq_u_bound}
\end{align}
Returning to (\ref{eq:main_new}) we infer that 
\[
    \int_{\Omega}|\mbf(\mbx)\mbu(\mbx)|  \le C \norm{\mbu}_{H^1(\Omega, T_h)} q^{1/2} \norm{\mbf}_{L^p(\Omega)},
\]    
    leading to the estimate
    \[
    \|\mbu\|_{L^q}\leq C C(\Omega)^{1/q} \norm{\mbu}_{H^1(\Omega, T_h)} q^{1/2}.
    \]
     In particular, note that
    \[
    \int_\Omega |\mbu|^{2q} \leq C(\Omega) \left(C \norm{\mbu}^2_{H^1(\Omega, T_h)}q\right)^q,
    \]
    so that, choosing $b>0$ such that $b C\norm{\mbu}^2_{H^1(\Omega, T_h)} <1/e$ we reach \eqref{eq:appthm1_new}.
    
    Regarding the embedding in the sense of mean convergence, as in \cite{trudinger1967imbeddings}, 
    we note that bounded functions are dense in the space $\tV^r_h(\Omega)^2$ and hence
     $\tV^r_h(\Omega)^2\subset E_{\varphi}(\Omega)$ due to the above embedding. In particular,
      if $\psi(t)\leq \varphi(\lambda t)$, for some $\lambda>0$,  $\tV^r_h(\Omega)\subset L_{\psi}(\Omega)$
       and the embedding is continuous with respect to mean convergence as, by Remark \ref{rem:orlicz}, norm convergence implies convergence in the mean.
\end{proof}

\begin{prop}\label{prop:appendix}
Let $\Phi:\R^{2\times 2}\to\R$ a continuous function satisfying
\[
|\Phi(\mbxi)| \le c_1e^{c_2 |\mbxi|^2}, \quad \forall \mbxi \in \R^{2 \times 2}
\]
and suppose that
\[ \norm{\mbu_h - \mbu}_{H^1(\Omega)} + \tred{|\nabla \mbu_h - \nabla \mbu|_{H^1(\Omega, T_h)} } \rightarrow 0, 
\quad h \rightarrow 0,
\]
for $\mbu_h \in V^q_h(\Omega)^2$  
and $\mbu \in H^2(\Omega)^2$. 
Then, up to extracting a subsequence,
\begin{align}
\int_\Omega \Phi(\nabla \mbu_{h}) \rightarrow \int_\Omega \Phi(\nabla \mbu), 
\quad \text{as } h \rightarrow 0.
\end{align}
\end{prop}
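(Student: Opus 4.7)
The approach is to invoke Vitali's convergence theorem (Theorem~\ref{Theorem:Vitali}), which requires pointwise almost-everywhere convergence along a subsequence together with uniform integrability of the family $(\Phi(\nabla \mbu_h))$. The convergence hypothesis $\norm{\mbu_h - \mbu}_{H^1(\Omega)} \to 0$ gives $\nabla \mbu_h \to \nabla \mbu$ in $L^2(\Omega)^{2\times 2}$, so along a further (not relabelled) subsequence, $\nabla \mbu_h \to \nabla \mbu$ almost everywhere in $\Omega$. Continuity of $\Phi$ then yields $\Phi(\nabla \mbu_h) \to \Phi(\nabla \mbu)$ a.e.

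For uniform integrability, I would apply the de la Vall\'ee Poussin criterion with $G(t) = t^{1+\delta}$ for a small $\delta > 0$ to be fixed. The exponential growth bound on $\Phi$ reduces the task to establishing
\begin{equation*}
\sup_{h>0} \int_\Omega e^{c_2(1+\delta) |\nabla \mbu_h|^2}\,d\mbx < \infty.
\end{equation*}
The key observation, which is meant to exploit the strong broken-Sobolev convergence of $\mbu_h$, is that $\norm{\nabla \mbu_h - \nabla \mbu}_{H^1(\Omega, T_h)} \to 0$; this follows from the hypothesis together with the fact that $\jumpop{\nabla \mbu} = 0$ since $\mbu \in H^2(\Omega)^2$, so the broken seminorm of $\nabla \mbu_h - \nabla \mbu$ reduces to $|\mbu_h - \mbu|_{H^2(\Omega, T_h)}$.

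Next I would split
\begin{equation*}
|\nabla \mbu_h|^2 \le (1+\eta)|\nabla \mbu_h - \nabla \mbu|^2 + \left(1+ \frac{1}{\eta}\right) |\nabla \mbu|^2
\end{equation*}
for a small parameter $\eta > 0$, and combine with H\"older's inequality to reduce the estimate to two separate factors: an integral of the form $\int_\Omega e^{\beta |\nabla \mbu_h - \nabla \mbu|^2}$, which I would bound uniformly via Theorem~\ref{Orlicz_dg}, since the discrete Trudinger-type embedding controls exponential integrability in terms of the $H^1(\Omega, T_h)$-norm and the latter becomes arbitrarily small for large $h$; and a fixed integral $\int_\Omega e^{\beta' |\nabla \mbu|^2}$, which is finite by the classical Trudinger embedding for $\nabla \mbu \in H^1(\Omega)^{2\times 2}$ in two dimensions.

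The hard part will be the quantitative calibration of the parameters: $\delta$, $\eta$ and the H\"older exponents must be chosen simultaneously so that the exponent $\beta$ lies below the Orlicz threshold of Theorem~\ref{Orlicz_dg} for the difference $\nabla \mbu_h - \nabla \mbu$ (which is achievable by taking $h$ small, since the threshold scales inversely with the squared norm), while the exponent $\beta'$ stays below the Trudinger threshold for the fixed limit $\nabla \mbu$. With the uniform $L^{1+\delta}$ bound in hand, Vitali's theorem yields $\int_\Omega \Phi(\nabla \mbu_h) \to \int_\Omega \Phi(\nabla \mbu)$ along the extracted subsequence.
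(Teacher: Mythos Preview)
Your approach is correct and shares the paper's overall strategy: pointwise a.e.\ convergence along a subsequence, uniform integrability via the discrete and classical Trudinger embeddings, then Vitali. The difference is in how uniform integrability is obtained. The paper bypasses de la Vall\'ee Poussin and the parameter tuning entirely by exploiting the convexity of $\psi(t)=e^{4c_2|t|^2}-1$: writing $|\Phi(\nabla\mbu_h)|\le c_1\psi(\nabla\mbu_h/2)+c_1$ and then
\[
\psi(\nabla\mbu_h/2)\le\tfrac12\,\psi(\nabla\mbu_h-\nabla\mbu)+\tfrac12\,\psi(\nabla\mbu),
\]
the right-hand side is the sum of an $L^1$-convergent sequence (mean convergence from Theorem~\ref{Orlicz_dg}) and a fixed $L^1$ function, hence automatically uniformly integrable. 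Your Young--H\"older route achieves the same separation into a difference term and a limit term, but at the cost of the calibration you flag as the hard part. That worry is in fact overblown: since $\nabla\mbu\in H^1(\Omega)^{2\times2}$ lies in $E_\varphi$ (as the $L_{\varphi^*}$-limit of its smooth, bounded approximants via the continuous Trudinger embedding), one has $\int_\Omega e^{\beta'|\nabla\mbu|^2}<\infty$ for \emph{every} $\beta'>0$, so there is no finite threshold to stay below; and any fixed exponent $\beta$ on the difference term is accommodated once $h$ is small enough, because $\|\nabla\mbu_h-\nabla\mbu\|_{H^1(\Omega,T_h)}\to0$. So both arguments work; the paper's convexity trick is just cleaner. (Minor slip: you mean ``small $h$'', not ``large $h$''.)
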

 
 \begin{proof}
Let $\mbw_h = \nabla \mbu_h$, $\mbw=\nabla \mbu$, then
$ \|\mbw_h - \mbw\|_{L^2(\Omega)} + |\mbw_h - \mbw|_{H^1(\Omega,T_h)} \to 0,\,\,h\to0.$
Up to extracting a subsequence, $\mbw_h\to \mbw$ pointwise a.e. and, by the continuity of
 $\Phi$, also $\Phi(\mbw_h) \to \Phi(\mbw)$ a.e. in $\Omega$.

Next, note that
\begin{align}\label{eq:propapp1}
|\Phi(\mbw_h)|  \leq c_1\left(e^{c_2|\mbw_h|^2} - 1\right) + c_1 = c_1 \psi(\mbw_h/2) +c_1,
\end{align}
where $\psi(t) = e^{4c_2|t|^2}-1$, is a convex function such that $\psi(t) = \varphi(\lambda t)$ for 
$\lambda=4c_2$, where $\phi$ is given in Theorem~\ref{Orlicz_dg}. The convexity of $\psi$ implies that
\[
\psi(\mbw_h/2)\leq \frac12 \psi(\mbw_h - \mbw) + \frac12\psi(\mbw).
\]
By Theorem~\ref{Orlicz_dg}, we have that $(\psi(\mbw_h - \mbw))$ converges in $L^1(\Omega)$, as $h\rightarrow 0$.
Therefore $\psi(\mbw_h/2)$ is uniformly integrable. But then \eqref{eq:propapp1} implies that 
$(\Phi(\mbw_h))$ is a uniformly integrable sequence and thus $\Phi(\mbw_h) \to \Phi(\mbw)$ in $L^1(\Omega)$.
 \end{proof}

\begin{Remark}
\label{Remark:cOrlicz}
Proposition \ref{prop:appendix} and the classical embedding of Trudinger for Orlicz spaces \cite{trudinger1967imbeddings}, 
also implies that if $(\mbu_\delta) \subset H^2(\Omega)$ and
$\norm{\mbu_\delta - \mbu}_{H^2(\Omega)} \rightarrow 0$, as $\delta \rightarrow 0$ then
$$\int_\Omega \Phi(\nabla \mbu_{\delta}) \rightarrow \int_\Omega \Phi(\nabla \mbu), \quad 
\text{as } \delta \rightarrow 0.$$ 
\end{Remark}


Finally, we establish  the $\Gamma$-convergence and the convergence of discrete absolute
 minimizers when the penalty term $\Phi$ has exponential growth.

\begin{Theorem}
\label{Thm:Gamma_conv_exp}
Let the penalty function $\Phi$ have the exponential growth (\ref{pen:exponential_growth}).
Then, the following properties hold:
\begin{enumerate}[\labelsep=-0.2em (i)]
\item  for all $\mbu \in\mathbb{A}(\Omega)$, there exists a sequence
   $(\mbu_h)_{h>0}$ with $\mbu_h \in \mathbb{A}^q_h(\Omega)$,  such that 
$\mbu_h \rightarrow \mbu$ in $H^1(\Omega)^2$ and 
\begin{align}
 \Psi[\mbu] \geq \limsup\limits_{h\rightarrow 0} \Psi_h[\mbu_h];
\end{align}
\label{limsup_exp}

\item \vspace{-0.1cm} for all $\mbu \in \mathbb{A}(\Omega)$ and all sequences 
$(\mbu_h)\subset \mathbb{A}^q_h(\Omega)$
 such that $\mbu_h \rightarrow \mbu$ in $H^1(\Omega)$  it holds
 \begin{align}
 \Psi[\mbu] \leq \liminf\limits_{h\rightarrow0} \Psi_h[\mbu_h];
\end{align}
\label{liminf_exp}
\item \vspace{-0.1cm} Theorem~\ref{Thm:min_convergence} holds, i.e., the discrete  absolute minimizers
 converge to an absolute minimizer of the continuous problem.

%
\end{enumerate}

\end{Theorem}
\begin{proof}

(i) Following the proof of  Theorem \ref{Theorem:limsup} it remains only  to verify
  $\Phi(\nabla \mbu_{h, \delta_h}) \rightarrow \Phi(\nabla \mbu)$ in $L^1(\Omega)$, from
  (\ref{limsup:Phi_convergence}),
where $(\mbu_{h, \delta_h})$ is the sequence defined after (\ref{ud_conv}). It is enough to show
 \begin{align}
 \Phi_{h}(\nabla \mbu_{h, \delta_h}) - \Phi(\nabla \mbu_{\delta_h}) \rightarrow 0 
 \quad \text{and}  \quad 
 \Phi(\nabla \mbu_{\delta_h}) - \Phi(\nabla \mbu) \rightarrow 0
 \end{align}
in $ L^1(\Omega)$, as $h\rightarrow 0$. By Proposition \ref{prop:appendix},
it suffices  to show that 
 $\tred{|\nabla \mbu_{h, \delta_h} - \nabla \mbu_{ \delta_h}|_{H^1(\Omega, T_h)}} \rightarrow 0,
 \text{ as } h\rightarrow 0,$ i.e.,
\begin{align}
\sum_{K \in T_{h}} \int_K \left| \nabla \nabla \mbu_{{h, \delta_h}} - \nabla \nabla\mbu_{ \delta_h} \right|^2
+
\sum_{e \in E_{h}^i} \frac{1}{h_e}\int_e \left|\jumpop{\nabla \mbu_{{h, \delta_h}} - \nabla\mbu_{ \delta_h} }   \right|^2
\rightarrow 0,
\label{exp:h2_conv}
\end{align}
as $h \rightarrow 0$. Using (\ref{eq:uhd_error}),   (\ref{ud_conv})  (\ref{delta2h})
we obtain the following bounds for the first term
\begin{equation}
\begin{aligned}
\sum_{K \in T_{h}} \int_K \left| \nabla \nabla \mbu_{{h, \delta_h}} - \nabla \nabla\mbu_{ \delta_h} \right|^2 
&\le c h^2 \sum_{K \in T_{h}} \int_K |\mbu_{ \delta_h} |^2_{H^3(K)} 
 {\le  C h |\mbu|^2_{H^2(\Omega)}. }
\end{aligned}
\label{expert:limsup_j1}
\end{equation}
Note that  $\mbu_{\delta} \in H^3(\Omega)^{2 } $ implies 
$\mbu_{\delta} \in C^1(\Omega)^2$ from the Sobolev embedding. 
Then working as (\ref{limsup:jump_conv}) 
\begin{align}
\sum_{e \in E_{h}^i} \int_e \left|\jumpop{\nabla \mbu_{{h, \delta_h}} - \nabla\mbu_{ \delta_h} }   \right|^2 =
\sum_{e \in E_{h}^i} \frac{1}{h_e}\int_{e} |\jumpop{\nabla  \mbu_{h, \delta_h}} |^2
\le c h|\mbu|^2_{H^2(\Omega)}.
\label{expert:limsup_j2}
\end{align}
From (\ref{expert:limsup_j1}) and (\ref{expert:limsup_j2}) we deduce (\ref{exp:h2_conv}).
Also Remark \ref{Remark:cOrlicz} implies   $\Phi(\nabla \mbu_{\delta_h}) \rightarrow \Phi(\nabla \mbu)$, in 
$L^1(\Omega)$, which concludes the proof of (i).

\vspace{1 mm}
(ii) This is immediate as $\Phi$ is continuous  and thus by Fatou's Lemma,
\begin{align}
 \int_\Omega \Phi(\nabla \mbu) \le \liminf_{h \rightarrow 0} \int_\Omega \Phi(\nabla \mbu_h),
 \quad \text{when } \mbu_h \rightarrow \mbu \text{ in } H^1(\Omega).
\end{align}

\vspace{1 mm}
(iii)
From the $\Gamma-$convergence result, (\ref{limsup_exp}) and (\ref{liminf_exp}) , 
the proof of Theorem \ref{Thm:min_convergence} can be adopted.  
 \end{proof}

%

\section{Model and Computational Experiments}
\label{sec:computations}
{}

\noindent
\tblue{\emph{Numerical Energy Minimization.}} The potential energy of the continuous model has been discretized using the finite element
discretization provided by the 
{\fontfamily{ptm}\selectfont FEniCS project}
\cite{AlnaesBlechta2015a}. Specifically, we have used quadratic Lagrange elements,
i.e., $\mbu_h \in V^2_h(\Omega)^2$.
For the energy minimization procedure we have employed  the nonlinear conjugate
gradient method, see \cite{NoceWrig06}. A parallelization of the
nonlinear conjugate gradient algorithm has been developed using 
{\fontfamily{ptm}\selectfont petsc4py} data structures 
\cite{dalcin2011parallel, petsc-user-ref}. The resulting deformations are visualized with 
{\fontfamily{ptm}\selectfont paraview} \cite{squillacote2007paraview}.

The nonlinear conjugate gradient method is a popular method for large-scale nonlinear optimization 
problems. It is a variant of  the conventional conjugate gradient method, where for a quadratic 
function the conjugate directions and the minimum along a given direction can be computed 
explicitly. For the nonlinear variant, in the $k-$th iteration, given the direction $p_k$, the next search 
direction $p_{k+1}$ (conjugate direction in the conventional method) is computed through the 
Polar-Ribi\`ere method. The minimum value over a given direction is approximated employing line search 
algorithms that satisfy the strong Wolfe condition, which guarantees in certain 
circumstances that the computed direction $p_k$ is a descent direction. For details see 
\cite{NoceWrig06}.

\noindent
\emph{Strain Energy Density.} Here we describe our model for the strain energy density $W+\Phi$ in \eqref{equ:total_potential} by following \cite[Section 5.2.1]{Grekas_2021}.
Starting with a  (microscopic) fiber, we let the effective stretch $\lambda$ equal the distance between its endpoints divided by its undeformed (relaxed) length.  The energy of a single fiber can be expressed  as a function   $w(\lambda)$ of the effective stretch $\lambda$.  When the fiber is in tension, it is straight and $\lambda$ equals the actual stretch (strain $+1$), while $w(\lambda)$ equals the elastic energy due to stretching of the fiber. While in compression, it may be buckled; in which case the elastic (mostly bending) energy of the fiber can still be expressed as a function  $w(\lambda)$ of the distance between its endpoints.  In order to model a 1D fiber energy  $w(\lambda)$  for a single fiber that buckles in compression, we start with the derivative $S(\la)= w'(\lambda)$, which represents force as a function of stretch. We choose a  polynomial that has increasing slope so that $S'(\lambda)<S'(1)$ for $0<\lambda<1$ (softening in compression due to buckling, and $S'(\lambda)>S'(1)$ for $\lambda>1$ (stiffening in tension). An example is $S(\lambda)=\mu (\lambda^5-\lambda^3)$ for $\lambda>0$ with $\mu =$const.$>0$.
Integrating this with respect to $\lambda$  gives an energy
\begin{equation}
w(\la)= \mu ( \la^6/6 -\la^4/4).
  \label{wa}
\end{equation}
We model the ECM as a 2D nonlinear elastic continuum undergoing  deformations $ \mby(\mbx)$ where a particle with position vector $\mbx$ in the undeformed state is mapped to deformed position $\mby=\mby(\mbx)=\mbx+\mbu(\mbx)$, with $\mbu$ the displacement.  The strain energy density of the material can be written as a function $\tW(\mbF)$ of the deformation gradient $\mbF=\nabla\mby$.  We model the ECM as an isotropic material, which means that $W$ depends on $\mbF$ only through the principal stretches $\la_1$, $\la_2$,  the eigenvalues of the  right stretch tensor $(\mbF^T\mbF)^{1/2}$. 
To connect the single fiber energy with the 2D strain energy density $W$ we follow e.g., \cite{vainchtein2002strain}.  We suppose the ECM consists of uniformly distributed fibers at the microscopic level. A macroscopically affine deformation is equivalent to a biaxial stretch in two orthogonal directions with stretches  $\lambda_1$, $\lambda_2$, modulo rotation.  A fiber of undeformed length $l$ making an angle $\theta$ with the  stretch axes in the undeformed state,  will have endpoints at $(0,0)$ and $(l\cos\theta,l\sin\theta)$. After deformation the latter is $(\lambda_1l\cos\theta,\la_2l\sin\theta)$. As a result, the effective stretch  of the fiber is $\lambda=\lambda(\theta)=\sqrt{(\lambda_1 \cos\theta)^2 + (\lambda_2 \sin\theta)^2}$, and its energy is $w(\lambda(\theta))$.  Summing over all fiber orientation angles $\theta$, we obtain the macroscopic elastic energy density of the fibrous ECM:
$$ \hat{W}(\lambda_1, \la_2) = \frac{1}{2\pi}\int_0^{2\pi} w(\lambda(\theta)) d\theta= \frac{1}{2\pi}\int_0^{2\pi} w \left(\sqrt{(\lambda_1 \cos\theta)^2
 + (\lambda_2 \sin\theta)^2} \right) d \theta.$$  
For $w(\lambda)$ given e.g. by (\ref{wa}), this integral can be evaluated explicitly:
\begin{align}
W(\nabla \mbu) = \tW(\mbF) =  \frac{\mu}{96} ( 5I_1^3 - 9I_1^2 -12 I_1 J^2 + 12 J^2 +8),
\label{equ:en_fun}
\end{align}
Here the deformation invariants are $I_1=I_1(\mbF)=\hbox{tr}(\mbF^T\mbF)=\la_1^2+\la_2^2$ and $J=J(\mbF)=\det\mbF=\la_1\la_2$, the Jacobian determinant of the deformation. The ratio of deformed to undeformed density equals $1/J$. From \cite[Lemma A.1.1]{grekasPhD}, $W$ satisfies the lower bound of (\ref{W:properties}). 
For an upper bound one has to remove the negative terms and use the 
inequality $2J \le I_1$.
We  add a fiber volume penalty term to the energy to account for resistance of  densified fibers to complete crushing by virtue of their nonzero volume, and to penalize intepenetration of matter. This term increases the energy abruptly when the Jacobian  $J$ becomes less than a small positive constant $b<<1$, while it becomes negligible as $J$ increases from $b$.  An example is 
\begin{equation}\tilde{\Phi}(J)=\exp[A(b- J)]  \label{Phi}
\end{equation}
with $\Phi(\nabla\mbu)=\tilde{\Phi}(\det({\bm 1} + \nabla \mbu))$, where $A$ is a large positive constant. 
The strain energy function $W  + \Phi$
 is  non rank-one convex. In fact the total energy density expressed as a function of the principal stretch pair $U(\la_1,\la_2)$ is
a double-well potential, modulo a null Lagrangian  \cite{Grekas_2021}, which can be chosen so that the  two minima of $U$ are of zero energy which is positive elsewhere. For  a typical choice of parameters, the two minima (wells) of $U$ are the reference state $(\la_1,\la_2)=(1,1)$ and the state $(\la_1,\la_2)=(0.2,1.06)$. The latter is a severe compression in one direction ($\la_1=0.2<1$) combined with a moderate stretch ($\la_2=1.06>1$) in an orthogonal direction. This compressed well involves an almost fivefold density increase and corresponds to the densified phase consisting of buckled, collapsed fibers. In our simulations, the deformation in  tethers and hairs is in the neighborhood of the compressed, densified well, while outside them it is close to the undeformed well.


\noindent
\emph{Simulations.} In the experiments of \cite{Grekas_2021}, the rather unpredictable live cells were replaced by round active particles which 
were embedded in the ECM. These  contract on demand by a 50\% decrease in radius, thereby exerting tractions onto the surrounding ECM that trigger the 
phase microstructure formation.
In our simulations, the ECM is modelled as a homogeneous material with the strain energy function just described. In the undeformed (reference) 
configuration, the ECM occupies the part $\Omega$ of a rectangular or round domain exterior to one or more circles $C_i$ of radius $r_c$, which represent
active-particle boundaries. Contraction of these particles is modelled by imposing Dirichlet boundary conditions on the displacement $\mbu$. The 
displacement field $\mbu$ is required to vanish on the outside boundary, which is thus assumed fixed. At the inner boundaries $C_i$, where 
$|\mbx-\mbz_i|=r_c$ (circles of radius $r_c$ and center $\mbz_i$) $\mbu$ is specified as 
\begin{equation}
    \mbu (\mbx)=-u_0(\mbx-\mbz_i),  \quad   \mbx\in C_i.\label{bccc}
\end{equation}
This represents a radial contraction which maps each particle boundary to a smaller circle of radius $r'_c=r_c-u_0$. The constant $u_0$ is obtained from   experimental data and has a typical value of 0.5$r_c$. For a more sophisticated model of active particles that allows shape deviations and motion of centers due to deformation see Remark~\ref{alternative_bc} and \cite[Section 2.4, Section 5.2.2]{Grekas_2021}. 
Computations involving a single active particle contracted by $50\%$ at the center of the ECM domain are shown in Figure~\ref{fig:mesh_dependence}, with a color map of the ratio of deformed over reference density.
The densified phase (red) occurs in radial hairlike bands that taper off into the undensified phase (blue). 
Figures~\ref{fig:h0/4} and \ref{fig:h0/8} are examples of  fine phase mixtures. The mixture of low and high strain phases
 is energetically preferable because the average strain it produces is compatible with the Dirichlet boundary conditions, whereas the  strains in the densified phase, despite their low energy, are not compatible with the boundary deformation.

In the case  of two contracting active particles depicted in Figure~\ref{fig:2cells}, the material between the particles is stretched along the axis passing
through the active particles' centers and is compressed in the transverse direction. This renders the strain state of the densified energy well energetically favorable.
As  depicted in Figure~\ref{fig:2cells}, above some critical value of
active particle contraction a  tract in the densified phase, namely a tether between  particles emerges, while 
radial bands  
emanate from each particle boundary as in the case of a single particle. 

\tblue{Examples of the  agreement  between simulations and 
experiments are shown in Figure~\ref{fig:exp_comparison}.  In the   
experiments of \cite{Grekas_2021},a tether is sometimes observed to split 
into thinner parallel bands (Figure~\ref{fig:fn3a}). 
This is similar to the phenomenon of twinned martensite in crystals, namely 
 splitting and tapering of twin bands in
a crystal near an incompatible boundary \cite{james1995modeling}. Here
as well, energy minimization forces strains to stay close to energy-density minima. The active particles in Figure~\ref{fig:fn3a}
contracted by $u_0/r_c$ = 38\%. The azimuthal stretch 
$\lambda_{\theta} = 1 -u_0/r_c=62\% $ imposed at the particle boundary by
contraction is incompatible with the stretch $\la_1=20\%$ corresponding
to the densified-phase energy well. To avoid this
mismatch while maintaining displacement continuity, the
tethers splits into narrow bands to minimize contact with 
the particle boundary (experiment:Figure~\ref{fig:fn3a}, simulation: Figure~\ref{fig:fn3b}).}

\tblue{
Can our model predict  how close  particles should be and how much should they contract in order for a tether to form betwewen them? Particle contraction and
distance between particles have been varied  in multiple simulations in \cite{Grekas_2021}. 
This provided a separatrix
curve of average particle radial strain versus distance
between particles (blue curve in Figure~\ref{fig:fn3c}). Above this
curve, our model predicts that a tether forms joining the
two particles; below the curve no tether will form. Data
from our experiments agreed with this prediction: blue
points in Figure~\ref{fig:fn3c} are data points from experimental particle pairs with
a tether observed joining them, red points correspond to
pairs without a tether between them.
Furthermore, our model predicts that the correct displacement decay rate with distance from a single contracting particle,  Figure~\ref{fig:fn3d}, which is much slower than in materials that do not suffer phase transition. This is an
important feature for long-range mechanosensing. Experiments with
fibroblasts \cite{Notbohm2015}, reveal the same displacement decay with distance as the one predicted by our computations, of the form $r^{-0.5}$. 
This means that the displacement fields propagate over a longer range compared to
the range predicted by linear elasticity, where decay is proportional to $r^{-2}$. For more details see \cite{grekasPhD}.}

If the regularization (higher gradient) term is omitted, i.e. $\varepsilon =0$, then the computed solutions 
depend on the mesh size; similar results can be found in \cite{luskin1996computation}. 
As  depicted in Figures~\ref{fig:h0}-\ref{fig:h0/8},
mesh resolution must be fine enough to capture localized deformations, but
further increases of resolution result in more and thinner bands around the active particle. 
The appearance of phase boundaries is due to  the ellipticity failure \cite{knowles1978,rosakis1993}
and the rank-one connected minima. 
The appearance of finer and finer phase mixtures as resolution is increased
is related to incompatibility of the wells with the boundary conditions\cite{james1995modeling}.  
As a result, the minimum is not attained, but minimizing sequences develop more and finer oscillations in order
to create less incompatible deformations of lower energy \cite{Ball1987}.\tblue{This is responsible for the mesh dependence, as increasing mesh resolution simply captures terms further along such minimising sequences as in Figures~\ref{fig:h0}-\ref{fig:h0/8}.}
The higher gradient 
term restores the ellipticity of the Euler-Lagrange equation, consequently regularizing the solution, ensuring the existence of a minimizer of limited fineness, and eliminating 
mesh dependence. 
In addition,  $\varepsilon$ can be considered  an internal length scale, controlling the thickness of transition  layers that replace gradient discontinuities.  This
means that smaller values of $\varepsilon$ permit finer microstructures with more and thinner hairs; see an example 
with varying $\varepsilon$ in Figure~\ref{fig:varying_e}.
\tblue{Figures~\ref{fig:7h0}-\ref{fig:7h0/8} illustrate that for  fixed $\varepsilon>0$ numerical solutions converge as mesh size tends to zero, as
expected from the analysis in this work, although it is possible that the limit state is merely an isolated local minimum of the energy in this
numerical example. The analysis in this work could conceivably apply to the case of convergence to an isolated local minimum as mesh size
approaches zero in the presence of  fixed $\varepsilon>0$ in the sense of \cite{braides2014local}; see also \cite{bartels2017bilayer}.
When $\varepsilon=0$, the energy functional in not lower semicontinuous, but for any $\varepsilon>0$ the
presented theory holds. In the numerical minimization, the local mesh size should be smaller than the length scale imposed by  $\varepsilon$ in
order to resolve fine structure as is illustrated by Figures~\ref{fig:7h0}-\ref{fig:7h0/8}. Therefore, for very small
values of $\varepsilon$ the degrees of freedom increase substantially due to the necessity of very high resolution, which may raise practical
issues  with the discrete minimization process.}

More complex cases include the contraction of multiple active particles. Then the
densified tehters connecting two cells can appear or disappear, influenced 
by other neighboring cells. An example can be seen in Figure~\ref{fig:many_cells}.

 In \cite{Grekas_2021} extensive simulations of the model are performed which 
exhibit excellent agreement with experimental results. \tblue{In particular the elastic phase transition model, combined with the numerical scheme analysed here is capable of predicting and explaining intricate details of the geometry of observed multiphase microstructures in fibrous collagen biomaterials.}

\begin{figure}[tbhp]
\centering
 \subfloat[]{\includegraphics[width=0.16\linewidth]{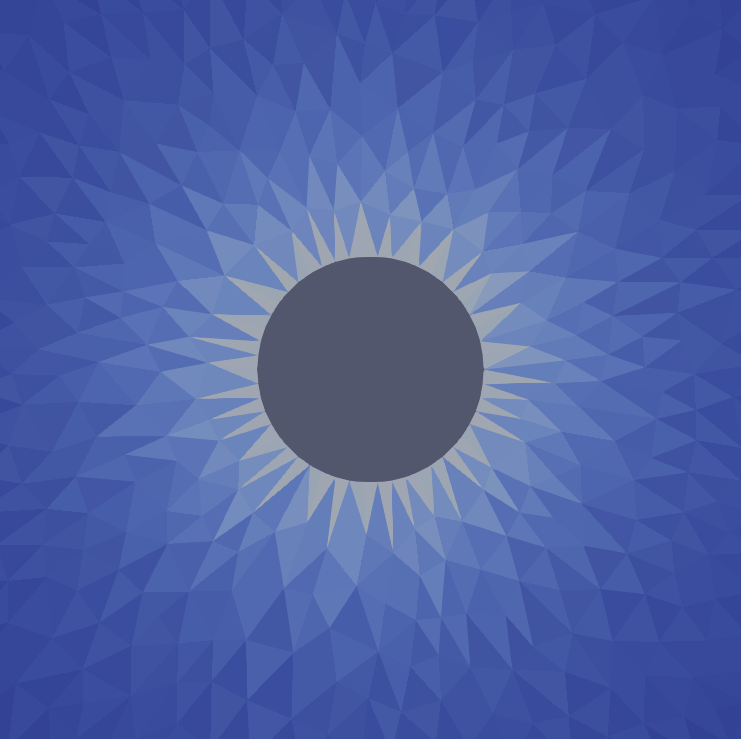} 
\label{fig:h0} }
\subfloat[]{ \includegraphics[width=0.16\linewidth]{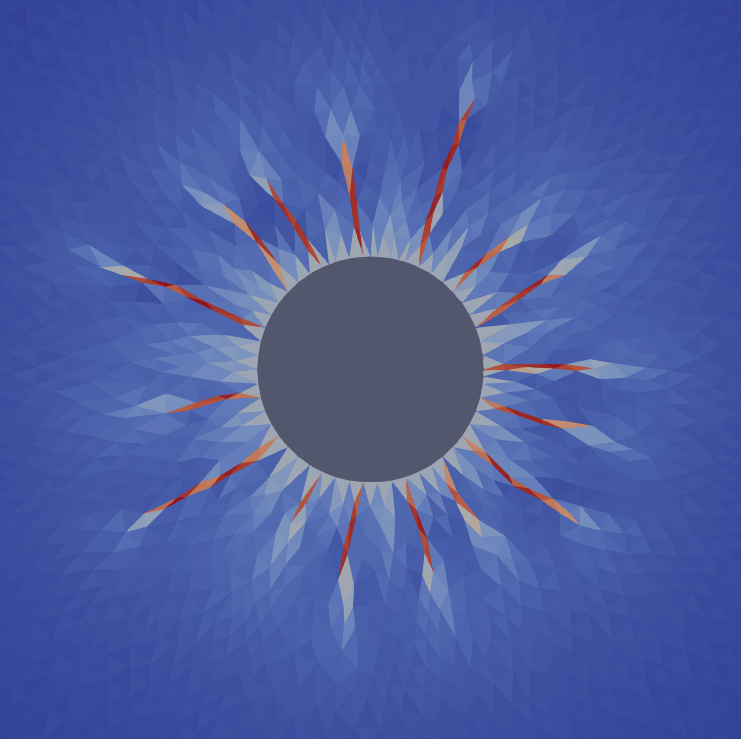}
\label{fig:h0/2} }
 \subfloat[]{\includegraphics[width=0.16\linewidth]{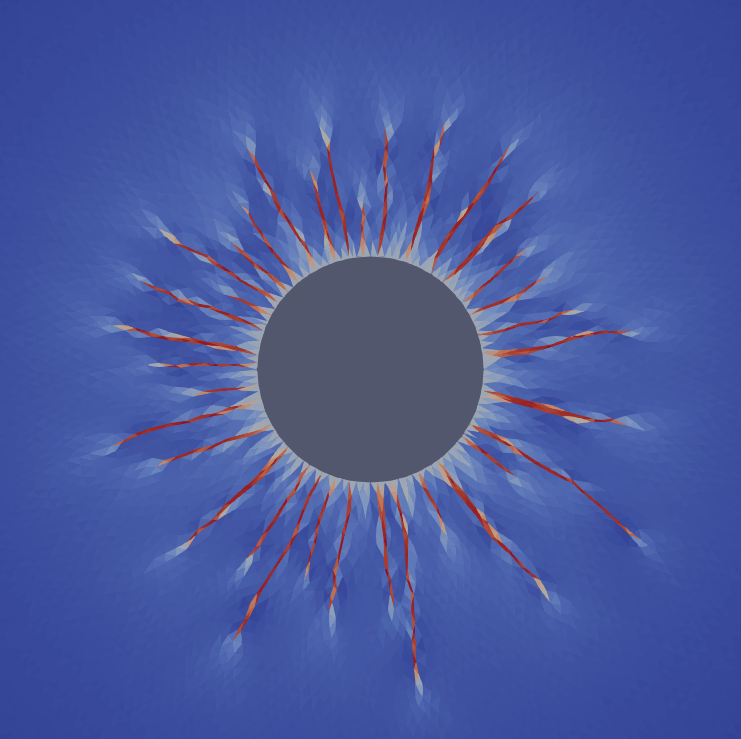} 
\label{fig:h0/4} }
\subfloat[]{ \includegraphics[width=0.16\linewidth]{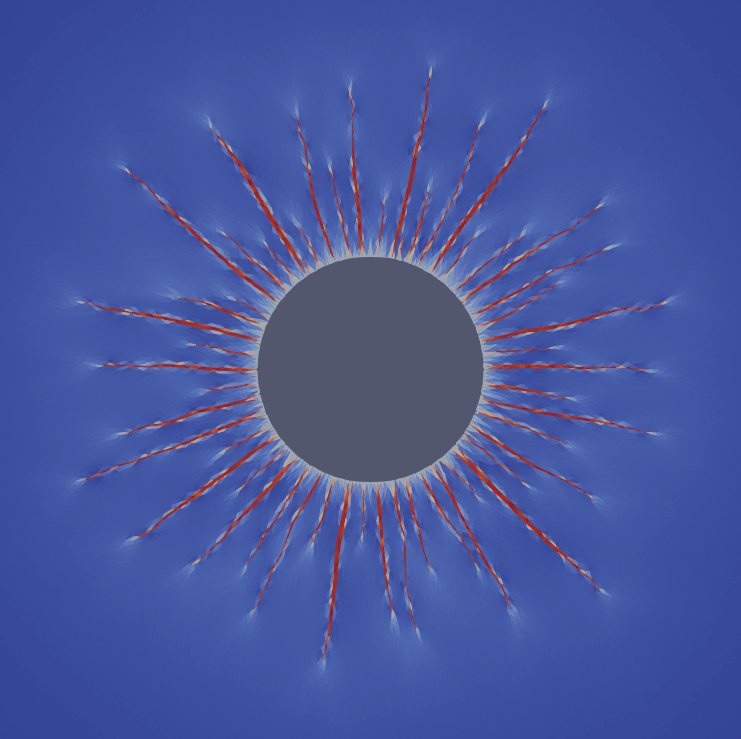}
 \label{fig:h0/8} }
 \includegraphics[height=2.4cm]{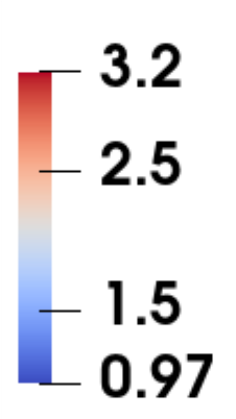}
 \\
\vspace{-0.4cm}
\subfloat[]{\includegraphics[width=0.16\linewidth]{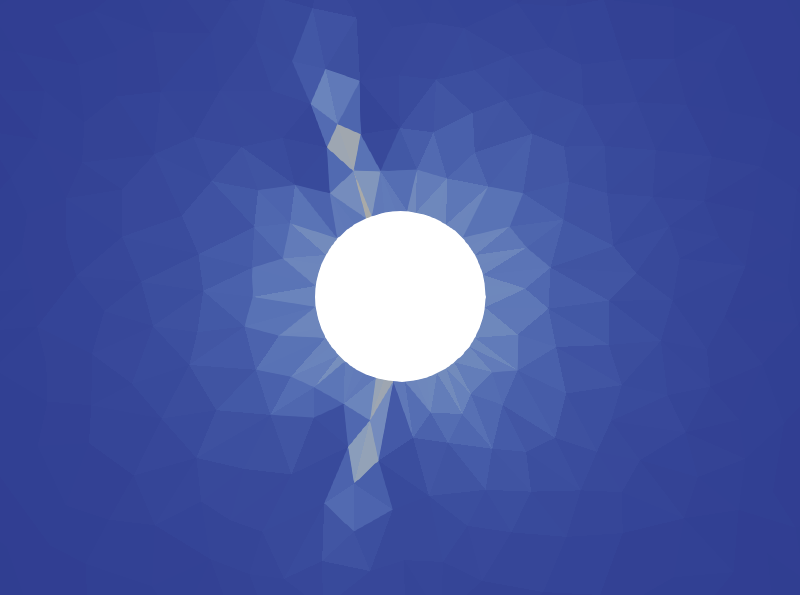} 
\label{fig:7h0} }
 \subfloat[]{\includegraphics[width=0.16\linewidth]{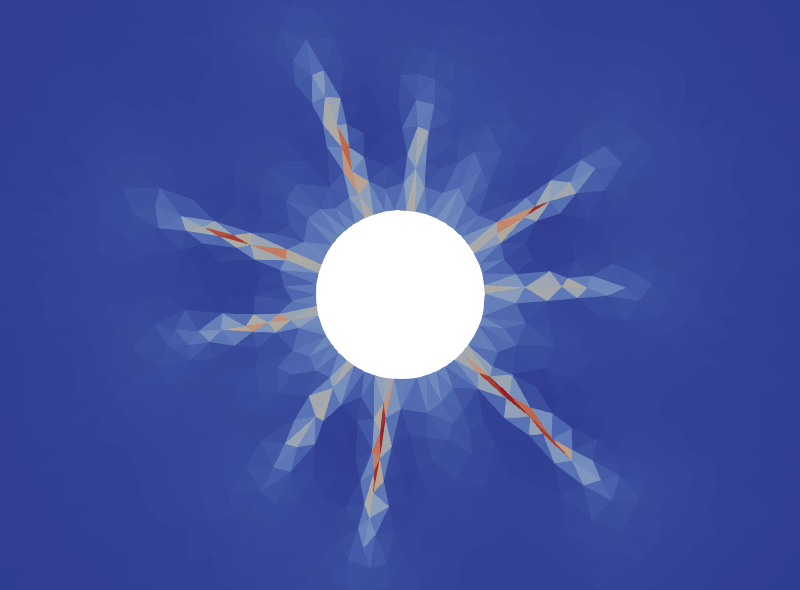} 
\label{fig:7h0/2} }
\subfloat[]{ \includegraphics[width=0.16\linewidth]{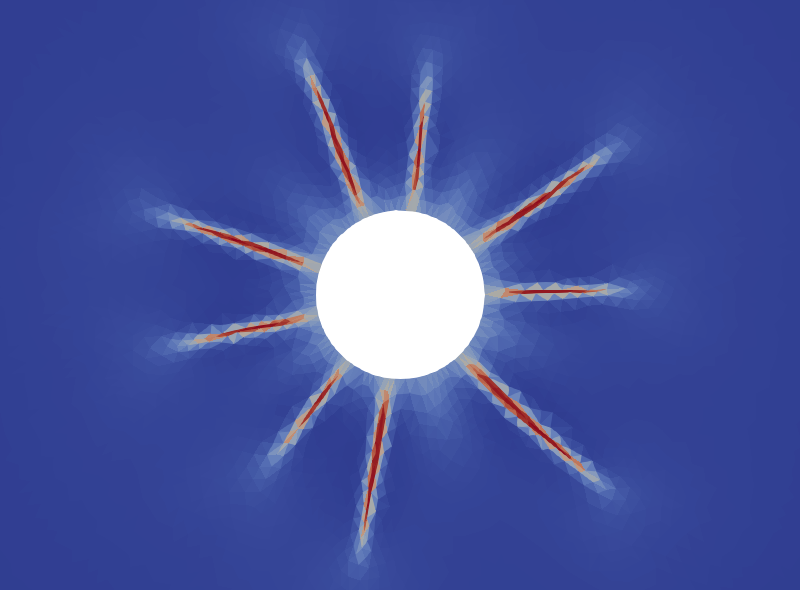}
\label{fig:7h0/4} }
 \subfloat[]{\includegraphics[width=0.16\linewidth]{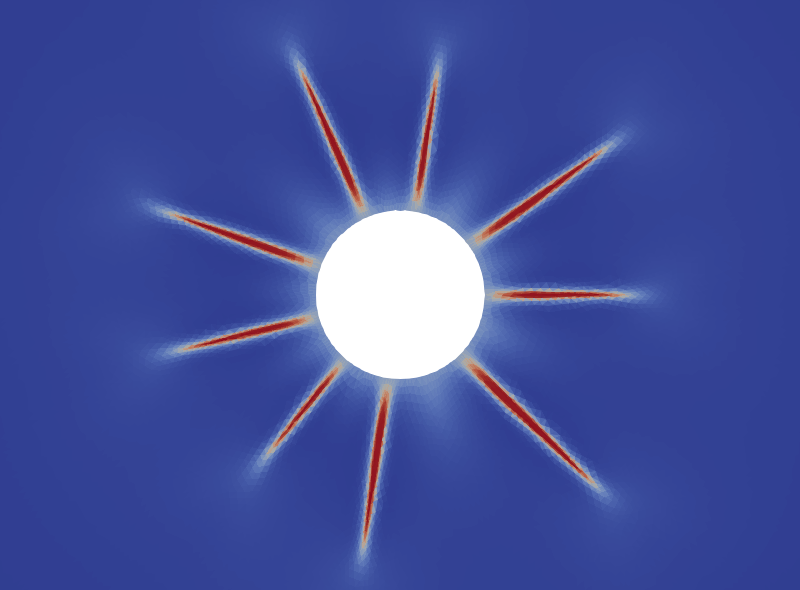} 
\label{fig:7h0/8} }
 \includegraphics[height=2.4cm]{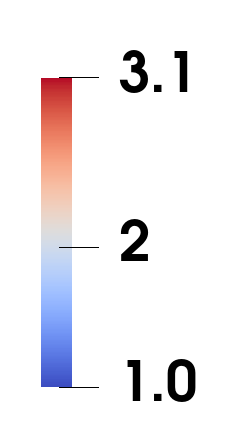} 
 \caption{ Matrix deformation for a $50\%$ contracting 
 active particle of undeformed radius $r_c$.  \hfil\break
  \protect\subref{fig:h0} -\protect\subref{fig:h0/8}: Excluding higher gradients, i.e. $\varepsilon=0$ while
 increasing mesh resolution. In \protect\subref{fig:h0} mesh size is $h_0\approx r_c/7$ 
 and microstructures are too thin compared to $h_0$  and cannot be captured. 
 Increasing mesh resolution to $h_0/2$, $h_0/4$ and $h_0/8$ in \protect\subref{fig:h0/2}, 
\protect\subref{fig:h0/4} and \protect\subref{fig:h0/8}, more and thinner hairlike microstructures emerge. \hfil\break
\protect\subref{fig:7h0} -\protect\subref{fig:7h0/8}: 
 Convergence of numerical solutions to a microstructure of finite fineness in the presence of fixed higher 
 gradient coefficient $\varepsilon=0.01 r_c$ with increasing mesh resolution.
 In \protect\subref{fig:7h0} mesh size is $h_0\approx r_c/4$;  
 microstructures are thinner than $h_0$ and cannot be captured.
Mesh resolution is increased to $h_0/2$, $h_0/4$, $h_0/8$  
in  \protect\subref{fig:7h0/2}, \protect\subref{fig:h0/4},\protect\subref{fig:h0/8} respectively. 
Last two panels are virtually identical.
 }
 \centering
 \label{fig:mesh_dependence}
\end{figure}

\vspace{-0.5cm}
\begin{figure}[tbhp]
  \centering
  \subfloat[]{\includegraphics[width=4.2cm]{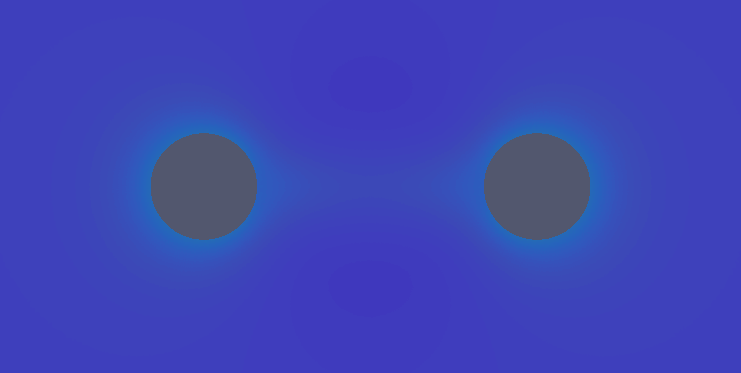} 
  \label{fig:3347_dens} }
   \subfloat[]{\includegraphics[width=4.2cm]{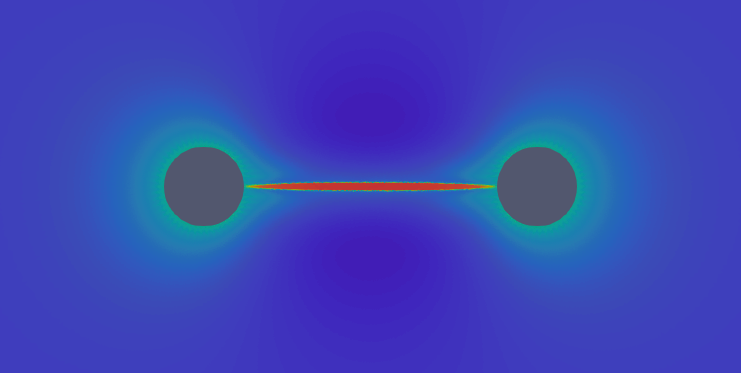}
\label{fig:3348_dens} }
  \subfloat[]{\includegraphics[width=4.2cm]{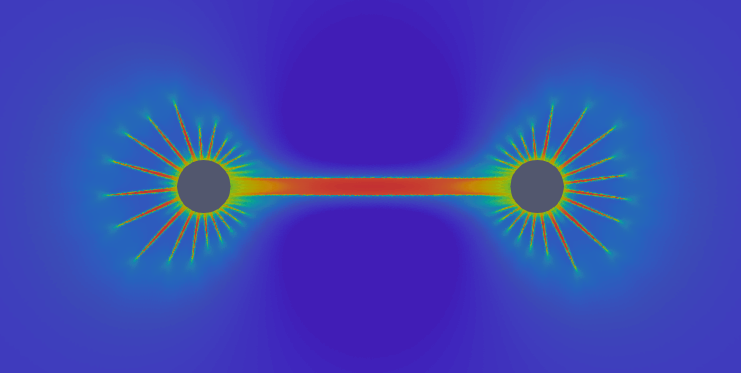}
 \label{fig:3339_dens}}
 \includegraphics[height=2.2cm]{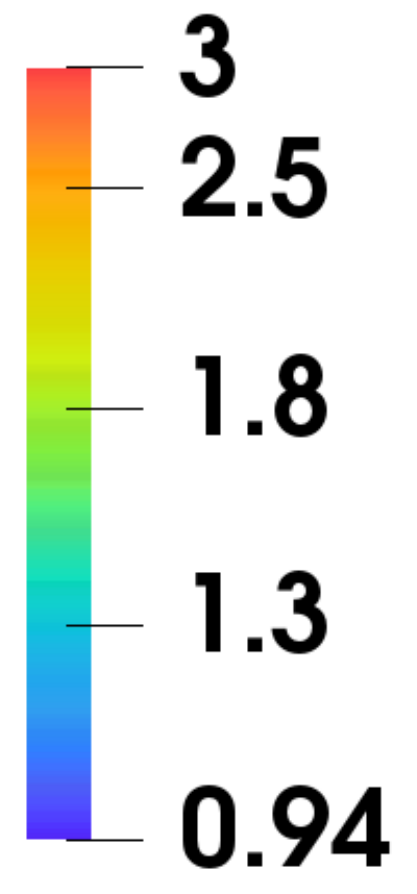}
 \vspace{-0.3cm}
  \caption{Color map of ECM density in the deformed state outside a pair of active particles contracted by
 \protect\subref{fig:3347_dens} $20\%$, \protect\subref{fig:3348_dens} $40\%$,
 \protect\subref{fig:3339_dens} $60\%$,  with $\varepsilon= 5 \cdot10^{-3}r_c$. 
 Particle centers   are located 
 at $(-2.5r_c, 0), (2.5r_c, 0)$. The \tblue{ECM} occupies the disk with center at $(0,0)$ and 
 radius $11r_c$ outside the particles.}
  \label{fig:2cells}
\end{figure}

 \begin{figure}[H]
\centering
 \subfloat[$\varepsilon =0$.]{\includegraphics[width=0.25\linewidth]{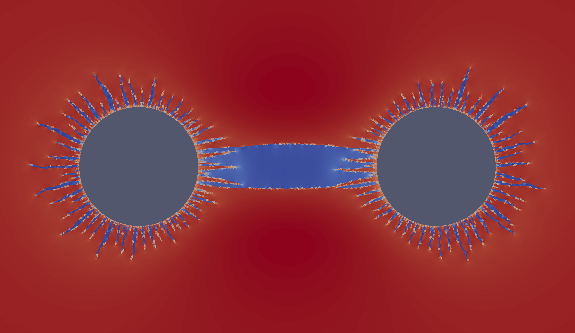} 
 }
\subfloat[$\varepsilon = 5 \cdot10^{-3}r_c $.]{ \includegraphics[width=0.25\linewidth]{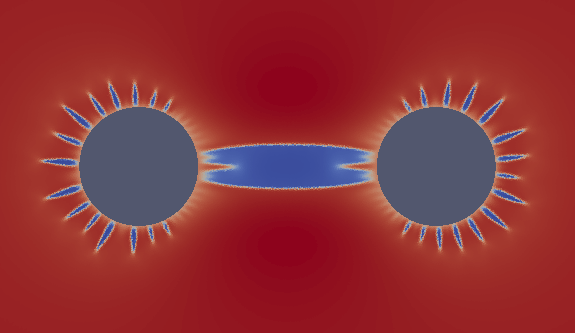}
 }
 \subfloat[$\varepsilon =5 \cdot10^{-2}r_c$]{ \includegraphics[width=0.25\linewidth]{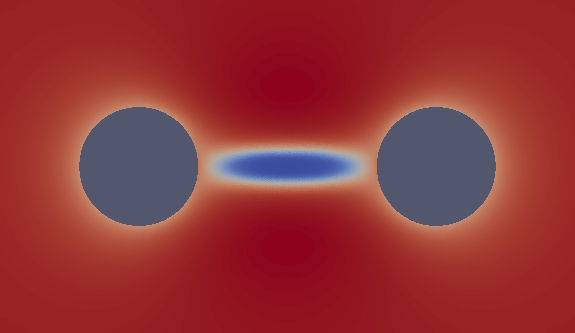}
  }
 \includegraphics[height=2.2cm]{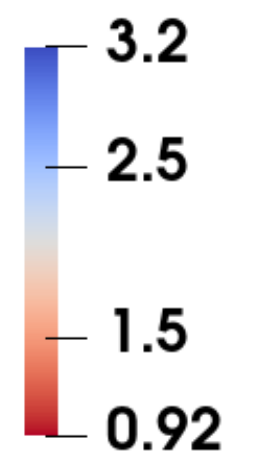}
 \vspace{-0.3cm}
 \caption{ The regularization parameter $\varepsilon$ imposes a length scale: Microstructures at a finer scale are smoothed out.
 For three  $\varepsilon$ values, increasing from left to right, computed density ratio in the
  reference configuration is shown. Active particles with distance between centers
  $5r_c$ contract by $50\%$. 
  }
 \centering
 \label{fig:varying_e}
\end{figure}

\vspace{-0.5cm}
\begin{figure}[h!]
\centering
 \includegraphics[width=0.5\linewidth]{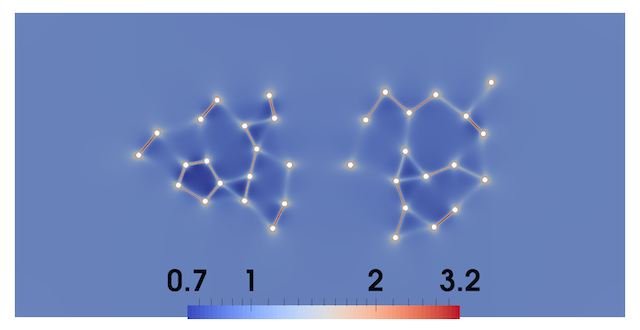}
  \vspace{-0.3cm}
 \caption{ ECM density in the deformed state under multiple active particle contracting.
 Each active particle contracts $50\%$, circles are contained in a rectangular domain, 
 $\varepsilon = 0.045 r_c$. This is similar to the experimental data of \cite[Figure~1D]{shi2014rapid}.
  }
 \centering
 \label{fig:many_cells}
\end{figure}

\vspace{-1cm}
\begin{figure}[H]
\centering
 \subfloat[]{\includegraphics[width=0.33\linewidth]{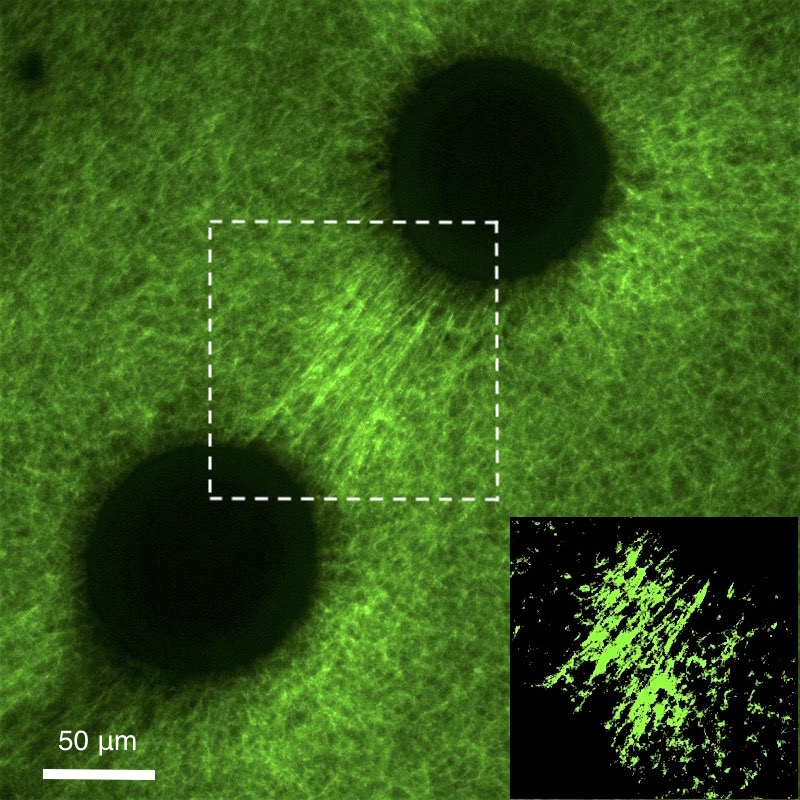} 
 \label{fig:fn3a} 
 }
\subfloat[]{ \includegraphics[width=0.33\linewidth]{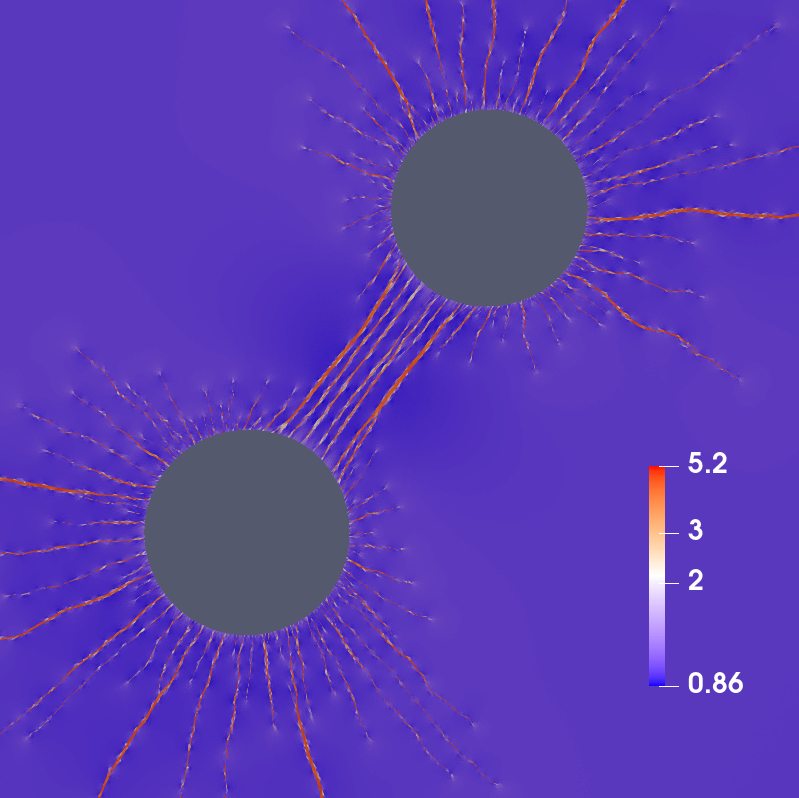}
 \label{fig:fn3b} }
 \\
 \vspace{-0.35cm}
  \subfloat[]{ \includegraphics[width=0.35\linewidth]{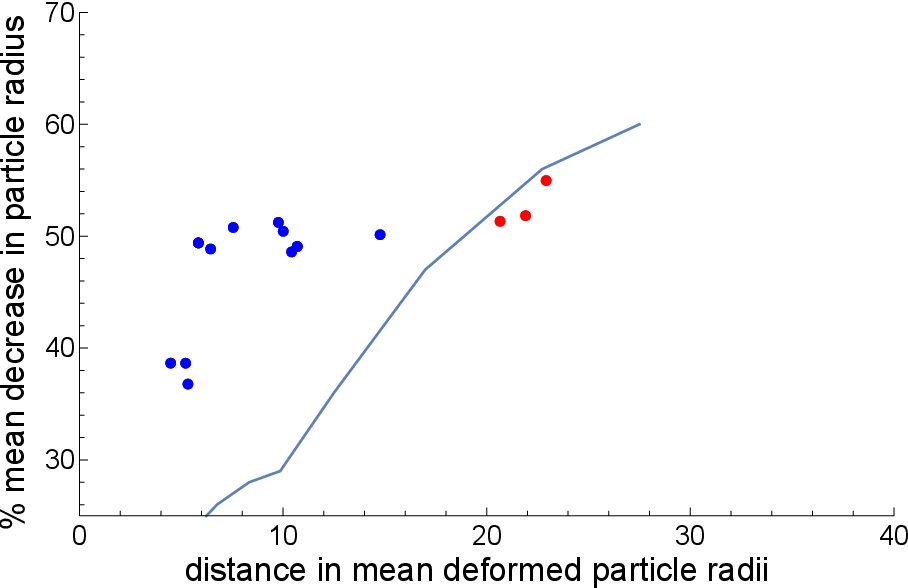}
  \label{fig:fn3c} }
 \subfloat[]{ \includegraphics[width=0.35\linewidth]{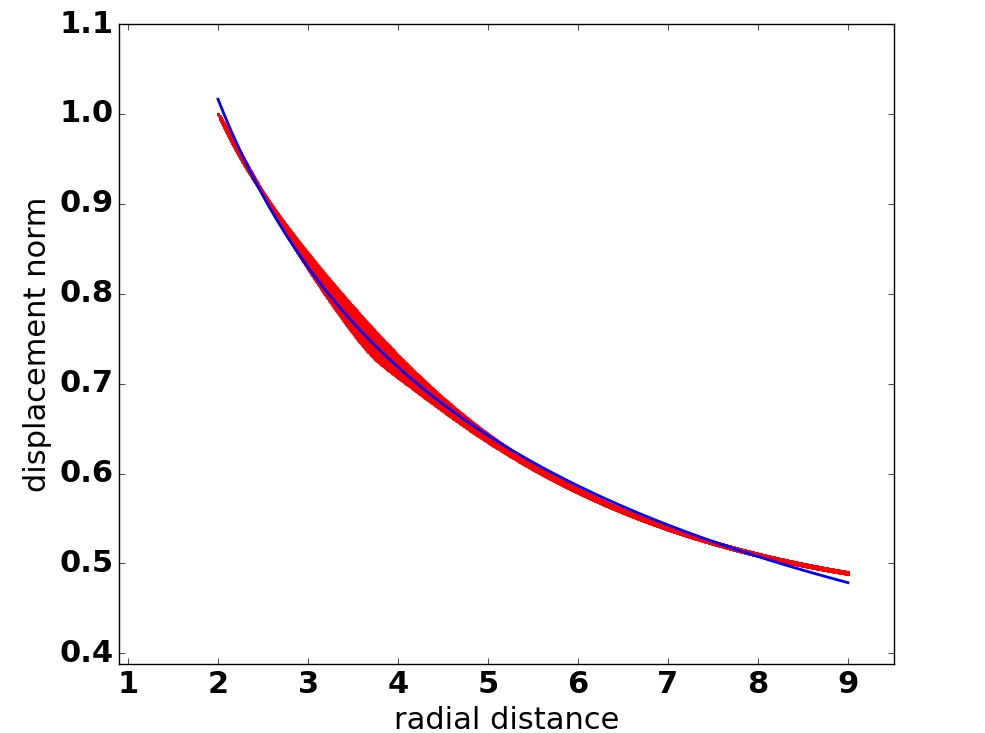}
 \label{fig:fn3d} 
  }
  
 \vspace{-0.3cm}
 \caption{ Comparison with experimental data. (a)-(c) reproduced  from \cite{Grekas_2021} with permission of the authors.
 \protect\subref{fig:fn3a} experiment and \protect\subref{fig:fn3b} simulation of  active particle pair
 contracted by 38\%. \protect\subref{fig:fn3a} A tether fully split into multiple thin bands. 
Insert shows high-contrast version of area within dotted rectangle. \protect\subref{fig:fn3b}
Simulation with initial radii, distances and contractions matched with \protect\subref{fig:fn3a} also
results in split tether. Colorbar: ratio of
deformed to undeformed density.
\protect\subref{fig:fn3c} Predicting whether a tether
forms between two particles. Blue curve: separatrix obtained from multiple simulations. Axes: \% decrease in particle
radius vs deformed distance (in deformed particle radii). Above blue curve, tethers are predicted to form between
particle pairs. No tether is predicted to form below blue curve. Our experimental data (each particle pair is one
point) agreed with the prediction: blue points: tether has formed. Red points: no tether has formed. 
  \protect\subref{fig:fn3d} Displacement norm $|\mbu(\mbx)$ over radial distance ($r=|\mbx|$). Red dots: computed displacement norm at mesh points, blue curve: graph of  $A r^{-n}$ fitted to the red dots. Least squares fit yielded parameters $A = 1.43$ and $n= 0.5$.}
\centering
 \label{fig:exp_comparison}
\end{figure}



\newpage

\section*{Acknowledgment}
Partially supported by the European Union's Horizon 2020 research and innovation programme under the 
Marie Sk\l{}odowska-Curie grant agreement no: 642768 ModCompShock. 
The research of GG was also supported by a Vannevar Bush Postdoctoral Fellowship.

%
%
%
%
%
%
%
 


\bibliographystyle{abbrv}
\bibliography{references.bib,software.bib}

\end{document}